\numberwithin{equation}{section}
\theoremstyle{plain}
\newtheorem{thm}{Theorem}[section]
\newtheorem{pro}[thm]{Proposition}
\newtheorem{lem}[thm]{Lemma}
\newtheorem{cor}[thm]{Corollary}
\theoremstyle{definition}
\newtheorem{defn}[thm]{Definition}
\newtheorem{cons}[thm]{Construction}
\newtheorem{rem}[thm]{Remark}
\newtheorem{f}[thm]{ }
\begin{document}
%----------------------------------------------------------------------

\title[Presentations of rings with a chain of semidualizing modules]
{Presentations of rings with a chain of semidualizing modules}

%----------------------------------------------------------------------

     \author[E. Amanzadeh]{Ensiyeh Amanzadeh}
     \author[M. T. Dibaei]{Mohammad T. Dibaei}

%----------------------------------------------------------------------

\address{Ensiyeh Amanzadeh, School of Mathematics,
Institute for Research in Fundamental Sciences (IPM),  P.O. Box:
19395-5746,  Tehran,  Iran; and Faculty of Mathematical Sciences and Computer,
Kharazmi University,  Tehran,  Iran. }

\email{en.amanzadeh@gmail.com}

\address{Mohammad T. Dibaei, Faculty of Mathematical Sciences and Computer,
Kharazmi University,  Tehran,  Iran; and School of Mathematics,
Institute for Research in Fundamental Sciences (IPM),  P.O. Box:
19395-5746,  Tehran,  Iran. }

\email{dibaeimt@ipm.ir}

%--------------------------------------------------------

\keywords{dualizing, semidualizing, suitable chain, totally $C$-reflexive}

%-----------------------------------------------------------

 \subjclass[2010]{13D05, 13D07}

%----------------------------------------------------------------------------
%\thanks{E. Amanzadeh was in part supported  by a grant from IPM (No. 94130045)}

%\thanks{M.T. Dibaei was in part supported  by a grant from IPM (No. 94130110)}

%%%%%%%%%%%%%%%%%%%%%%%%%%%%%%%%%%%%%%%%%%%%%%%%%%%%%%%%%%%%%%%%%%%%%%%%%%%%%%%%%%%%%%%%%%%%%%%%%%%%%%%%%%%%%%%%%%%%%%%%%%%%%%%%%%%%%%%%%%%%%%%%%%%%%

\begin{abstract}
Inspired by Jorgensen et.\! al., it is proved that if a  Cohen-Macaulay local ring $R$ with dualizing module admits a suitable chain of semidualizing $R$-modules of length $n$, then $R\cong Q/(I_1+\cdots+I_n)$ for some Gorenstein ring $Q$ and ideals $I_1,\cdots, I_n$ of $Q$; and, for each $\Lambda\subseteq [n]$, the ring $Q/(\Sigma_{l\in \Lambda} I_l)$ has some interesting cohomological properties . This extends the result of Jorgensen et.\!~al., and also of Foxby and  Reiten.
\end{abstract}

%%%%%%%%%%%%%%%%%%%%%%%%%%%%%%%%%%%%%%%%%%%%%%%%%%%%%%%%%%%%%%%%%%%%%%%%%%%%%%%%%%%%%%%%%%%%%%%%%%%%%%%%%%%%%%%%%%%%%%%%%%%%%%%%%%%%%%%%%%%%%%%%%%%%%%

\maketitle

%%%%%%%%%%%%%%%%%%%%%%%%%%%%%%%%%%%%%%%%%%%%%%%%%%%%%%%%%%%%%%%%%%%%%%%%%%%%%%%%%%%%%%%%%%%%%%%%%%%%%%%%%%%%%%%%%%%%%%%%%%%%%%%%%%%%%%%%%%%%%%%%%%%%%
\section{Introduction} \label{sec1}

Throughout  $R$ is a commutative noetherian local ring.
Foxby \cite{f},  Vasconcelos \cite{v} and Golod \cite{g} independently initiated the study of
semidualizing modules.
A finite (i.e. finitely generated) $R$-module $C$ is called \emph{semidualizing} if the natural
homothety map  $\chi_C^R: R \longrightarrow \mathrm{Hom}_R(C,  C)$ is an isomorphism
and $\mathrm{Ext}^{\geqslant1}_R(C,  C)=0$ ( see \cite[Definition 1.1]{hj} ).
Examples of semidualizing $R$-modules include $R$ itself and a dualizing $R$-module when one exists.
The set of all isomorphism classes of semidualizing $R$-modules is denoted by $\mathfrak{G}_0(R)$,
and the isomorphism class of a semidualizing $R$-module $C$ is denoted $[C]$.
The set $\mathfrak{G}_0(R)$ have caught attentions of several authors; see, for example \cite{fs-w2}, \cite{cs-w}, \cite{ns-w} and \cite{s-w1}.
In \cite{cs-w}, Christensen and Sather-Wagstaff  show that $\mathfrak{G}_0(R)$ is finite when $R$ is Cohen-Macaulay and equicharacteristic.
Then Nasseh and Sather-Wagstaff, in \cite{ns-w}, settle the general assertion that
$\mathfrak{G}_0(R)$ is finite.
Also, in \cite{s-w1}, Sather-Wagstaff studies  the cardinality of $\mathfrak{G}_0(R)$.

Each semidualizing $R$-module $C$ gives rise to a notion of reflexivity for finite $R$-modules.
For instance, each finite projective $R$-module is  totally $C$-reflexive.
For semidualizing $R$-modules $C$ and $B$, we write $[C] \trianglelefteq [B]$ whenever
$B$ is totally $C$-reflexive.
In \cite{gerko}, Gerko defines chains in $\mathfrak{G}_0(R)$. A \emph{chain} in
$\mathfrak{G}_0(R)$ is a sequence
$[C_n]\trianglelefteq \cdots \trianglelefteq[C_1] \trianglelefteq [C_0]$,
and such a chain has length $n$ if $[C_i]\neq[C_j]$ whenever $i\neq j$.
In \cite{s-w1}, Sather-Wagstaff uses the length of chains in $\mathfrak{G}_0(R)$
to provide a lower bound for the cardinality of $\mathfrak{G}_0(R)$.

It is well-known that a Cohen-Macaulay ring which is homomorphic
image of a Gorenstein local ring, admits a dualizing module (see \cite[Theorem 3.9]{s} ).
Then Foxby \cite{f} and Reiten \cite{reiten}, independently, prove the converse.
Recently Jorgensen et.\! al. \cite{jls-w}, characterize the Cohen-Macaulay local rings which admit
dualizing modules and non-trivial semidualizing modules ( i.e. neither free nor dualizing ).

In this paper, we are interested in characterization of a  Cohen-Macaulay ring $R$ which admits
a dualizing module and a certain chain in $\mathfrak{G}_0(R)$.
We prove that, when a Cohen-macaulay ring $R$ with dualizing module has a
\emph{suitable chain} in $\mathfrak{G}_0(R)$ (see Definition~\ref{suitable}) of length $n$,
then there exist a Gorenstein ring $Q$ and ideals $I_1, \cdots, I_n$ of $Q$ such that
$R\cong Q/(I_1+\cdots+I_n)$ and, for each $\Lambda\subseteq [n]=\{1, \cdots, n\}$,
the ring $Q/(\Sigma_{l\in \Lambda} I_l)$
has certain  homological and cohomological properties (see Theorem~\ref{T42}).
Note that, this result gives the result of Jorgensen et.\! al. when $n=2$ and the result of
Foxby and Reiten in the case $n=1$.
We prove a partial converse of Theorem~\ref{T42} in Propositions \ref{P42} and \ref{P43}.

%%%%%%%%%%%%%%%%%%%%%%%%%%%%%%%%%%%%%%%%%%%%%%%%%%%%%%%%%%%%%%%%%%%%%%%%%%%%%%%%%%%%%%%%%%%%%%%%%%%%%%%%%%%%%%%%%%%%%%%
\section{Preliminaries} \label{sec2}

This section contains definitions and background material.
\begin{defn}\label{reflexive} (\cite[Definition 2.7]{hj} and \cite[Theorem 5.2.3 and Definition 6.1.2]{s-w2})
Let $C$ be a semidualizing $R$-module. A finite $R$-module $M$ is \emph{totally} $C$-\emph{reflexive}
when it satisfies the following conditions\,:
\begin{itemize}
\item[(i)] the natural homomorphism
$\delta^C_M: M \longrightarrow \mathrm{Hom}_R(\mathrm{Hom}_R(M, C), C)$
is an isomorphism, and
\item[(ii)]
$\mathrm{Ext}^{\geqslant1}_R(M,  C)=0=\mathrm{Ext}^{\geqslant1}_R(\mathrm{Hom}_R(M, C),  C)$.
\end{itemize}
A totally $R$-reflexive is referred to as totally reflexive.
The $\mbox{G}_C$-dimension of a finite $R$-module $M$, denoted $\mbox{G}_C$-$\mathrm{dim}_R(M)$,
is defined as

$\hspace{1cm}\mbox{G}_C$-$\mathrm{dim}_R(M)= \mathrm{inf}\left\{ \begin{array}{lll}
n\geqslant 0 & {\Bigg |} & {\small\begin{array}{l}
\mathrm{there\ is\ an\ exact\ sequence\ of\ } R\!-\!\mathrm{modules}\\ 0\rightarrow G_n\rightarrow \cdots\rightarrow G_1\rightarrow G_0\rightarrow M\rightarrow0\\ \mathrm{such\ that\ each\ } G_i\ \mathrm{is\ totally}\  C\!-\!\mathrm{reflexive}\end{array}}
\end{array}\right\}.$
\end{defn}

\begin{rem}\label{R1}  \cite[Theorem 6.1]{c2}
Let $S$ be a Cohen-Macaulay local ring equipped with a module-finite local ring
homomorphism $\tau: R \rightarrow S$ such that $R$ is Cohen-Macaulay.
Assume that $C$ is a semidualizing $R$-module.
Then $\mbox{G}_C$-$\mathrm{dim}_R(S)<\infty$ if and
only if there exists an integer $g\geqslant0$ such that $\mathrm{Ext}^i_R(S, C) = 0$ for all $i$,
$i\neq g$, and $\mathrm{Ext}^g_R(S, C)$
is a semidualizing $S$-module; when these conditions hold, one has $g=\mbox{G}_C$-$\mathrm{dim}_R(S)$.
\end{rem}

\begin{f}\label{f2}
Define the order $\trianglelefteq$ on $\mathfrak{G}_0(R)$. For $[B], [C]\in \mathfrak{G}_0(R)$,
write  $[C]\trianglelefteq[B]$ when $B$ is totally $C$-reflexive (see, e.g., \cite{s-w1}).
This relation is reflexive and antisymmetric \cite[Lemma 3.2]{fs-w1},
but it is not known whether it is transitive in general.
Also, write $[C]\vartriangleleft[B]$ when $[C]\trianglelefteq [B]$ and $[C]\neq[B]$.
For a semidualizing $C$, set
$$\mathfrak{G}_C(R)= \big\{[B]\in \mathfrak{G}_0(R)\  \big|\  [C]\trianglelefteq [B] \big\}.$$
In the case  $D$ is a dualizing $R$-module, one has $[D]\trianglelefteq [B]$ for any semidualizing $R$-module $B$, by \cite[(V.2.1)]{h}, and so $\mathfrak{G}_D(R)= \mathfrak{G}_0(R)$.

If $[C]\trianglelefteq[B]$ then $\mathrm{Hom}_R(B, C)$ is a semidualizing and
$[C]\trianglelefteq[\mathrm{Hom}_R(B, C)]$ (\cite[Theorem 2.11]{c2}).
Moreover, if $A$ is another semidualizing $R$-module with $[C]\trianglelefteq[A]$, then $[B]\trianglelefteq[A]$
if and only if $[\mathrm{Hom}_R(A, C)]\trianglelefteq[\mathrm{Hom}_R(B, C)]$ (\cite[Proposition 3.9]{fs-w1}).
\end{f}

\begin{thm}\label{T01}\cite[Theorem 3.1]{gerko}
Let $B$ and $C$ be two semidualizing $R$-modules such that $[C]\trianglelefteq[B]$.
Assume that $M$ is an $R$-module which is both totally $B$-reflexive and totally $C$-reflexive,
then the composition map
$$\varphi : \mathrm{Hom}_R(M, B)\otimes_R\mathrm{Hom}_R(B, C) \longrightarrow
\mathrm{Hom}_R(M, C)$$
is an isomorphism.
\end{thm}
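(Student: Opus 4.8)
The plan is to reformulate the statement in the derived category $\mathrm{D}(R)$ and to prove it there by applying $\mathrm{RHom}_R(-,C)$ and invoking the reflexivity and semidualizing facts collected in \S\ref{sec2}.

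Since $M$ is totally $B$-reflexive and totally $C$-reflexive and $B$ is totally $C$-reflexive, all the relevant complexes are concentrated in degree zero: $\mathrm{RHom}_R(M,B)\simeq\mathrm{Hom}_R(M,B)$, $\mathrm{RHom}_R(M,C)\simeq\mathrm{Hom}_R(M,C)$, and $\mathrm{RHom}_R(B,C)\simeq\mathrm{Hom}_R(B,C)$; moreover $\mathrm{Hom}_R(M,B)$ is again totally $B$-reflexive, $\mathrm{Hom}_R(M,C)$ is totally $C$-reflexive, and, by Fact~\ref{f2}, $\mathrm{Hom}_R(B,C)$ is semidualizing with $\mathrm{Hom}_R(\mathrm{Hom}_R(B,C),C)\cong B$. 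Composition of maps is the $H_0$ of the natural derived pairing
\[
\mathrm{RHom}_R(M,B)\otimes^{\mathbf L}_R\mathrm{RHom}_R(B,C)\longrightarrow\mathrm{RHom}_R(M,C),
\]
so it suffices to show this pairing is an isomorphism in $\mathrm{D}(R)$: its $H_0$ is exactly $\varphi$, and quasi-isomorphy forces in addition the vanishing of $\mathrm{Tor}^R_{\geqslant1}(\mathrm{Hom}_R(M,B),\mathrm{Hom}_R(B,C))$. (Equivalently, one may argue by d\'evissage: resolving $M$ by finite free modules and using the above reflexivities, the syzygy exact sequences and their $\mathrm{Hom}_R(-,B)$- and $\mathrm{Hom}_R(-,C)$-duals reduce the problem to the case $M=R$, i.e. to the evaluation map $B\otimes_R\mathrm{Hom}_R(B,C)\to C$ together with the corresponding $\mathrm{Tor}$-vanishing.)

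To prove the pairing is an isomorphism, apply $\mathrm{RHom}_R(-,C)$ to it. Using the tensor--Hom adjunction together with the identities above,
\[
\mathrm{RHom}_R\big(\mathrm{Hom}_R(M,B)\otimes^{\mathbf L}_R\mathrm{Hom}_R(B,C),\,C\big)\simeq\mathrm{RHom}_R\big(\mathrm{Hom}_R(M,B),\,\mathrm{RHom}_R(\mathrm{Hom}_R(B,C),C)\big)\simeq\mathrm{RHom}_R(\mathrm{Hom}_R(M,B),B)\simeq M,
\]
and likewise $\mathrm{RHom}_R(\mathrm{Hom}_R(M,C),C)\simeq M$; a straightforward unwinding shows that under these identifications $\mathrm{RHom}_R(-,C)$ carries the pairing to the identity of $M$. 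The target $\mathrm{Hom}_R(M,C)$ of the pairing is totally $C$-reflexive, hence $C$-reflexive; \emph{granting} that its source $W:=\mathrm{Hom}_R(M,B)\otimes^{\mathbf L}_R\mathrm{Hom}_R(B,C)$ is also $C$-reflexive --- that is, has finite $\mathrm{G}_C$-dimension --- the functor $\mathrm{RHom}_R(-,C)$ reflects isomorphisms between $W$ and $\mathrm{Hom}_R(M,C)$ (via naturality of the biduality morphism), so the pairing is an isomorphism, as required.

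The one non-formal step, and hence the main obstacle, is precisely the assertion that $W=\mathrm{Hom}_R(M,B)\otimes^{\mathbf L}_R\mathrm{Hom}_R(B,C)$ has finite $\mathrm{G}_C$-dimension; in the case $M=R$ this is the statement that $C\cong B\otimes_R\mathrm{Hom}_R(B,C)$ with vanishing higher $\mathrm{Tor}$, equivalently that $C$ lies in the Auslander class of $B$. This is where the hypothesis $[C]\trianglelefteq[B]$ must be used essentially; I would establish it by taking a finite free resolution of $\mathrm{Hom}_R(B,C)$ (which is semidualizing by Fact~\ref{f2}), applying $\mathrm{Hom}_R(-,C)$ to obtain a $\mathrm{Hom}_R(-,C)$-exact coresolution $0\to B\to C^{n_0}\to C^{n_1}\to\cdots$, and exploiting the semidualizing identities $\mathrm{Hom}_R(C,C)\cong R$ and $\mathrm{Hom}_R(\mathrm{Hom}_R(B,C),\mathrm{Hom}_R(B,C))\cong R$ to control the resulting $\mathrm{Tor}$- and $\mathrm{Ext}$-modules.
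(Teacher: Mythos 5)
The paper does not prove this statement; it is quoted verbatim with a citation to \cite[Theorem~3.1]{gerko}, so there is no ``paper's own proof'' to compare against. That said, the proposal as written has a genuine gap, which it partly acknowledges but does not close.

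The crux is the assertion that $W:=\mathrm{Hom}_R(M,B)\otimes^{\mathbf L}_R\mathrm{Hom}_R(B,C)$ is $C$-reflexive. Everything else in your argument (the adjunction computation showing $\mathrm{RHom}_R(W,C)\simeq M\simeq\mathrm{RHom}_R(\mathrm{Hom}_R(M,C),C)$, and the observation that $\mathrm{RHom}_R(-,C)$ reflects isomorphisms between $C$-reflexive objects) is sound. But once you know $\mathrm{RHom}_R(W,C)\simeq M$, the biduality morphism $\delta_W: W\to \mathrm{RHom}_R(\mathrm{RHom}_R(W,C),C)\simeq\mathrm{Hom}_R(M,C)$ \emph{is} (up to the natural identifications) the composition pairing $\varphi$ you are trying to prove is an isomorphism. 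So ``$W$ is $C$-reflexive'' and ``$\varphi$ is an isomorphism'' are logically equivalent in your framework, and granting the former to prove the latter is circular. You flag this as ``the one non-formal step,'' but the closing sketch does not resolve it: building the $\mathrm{Hom}_R(-,C)$-exact coresolution $0\to B\to C^{n_0}\to C^{n_1}\to\cdots$ and citing $\mathrm{Hom}_R(C,C)\cong R$ does not by itself yield either the required Tor-vanishing $\mathrm{Tor}^R_{\geqslant1}(\mathrm{Hom}_R(M,B),\mathrm{Hom}_R(B,C))=0$ or the finiteness of $\mathrm{G}_C$-$\dim_R W$, because that coresolution is not split and applying $\mathrm{Hom}_R(M,-)$ or $-\otimes_R \mathrm{Hom}_R(M,B)$ to it needs exactly the vanishing you are after. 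Also note a terminological slip: the condition you want in the base case $M=R$ is $C\in\mathcal B_B(R)$ (Bass class), not the Auslander class.

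To make this approach work you would need an \emph{independent} proof that $C$ lies in the Bass class $\mathcal B_B(R)$ whenever $B$ is totally $C$-reflexive --- this is the substantive content, and it is where $[C]\trianglelefteq[B]$ enters. Gerko's own argument (and the standard treatment, e.g.\ in Christensen's work on semidualizing complexes) establishes this at the level of semidualizing complexes using Foxby equivalence before identifying the composition map with the evaluation/biduality morphism; one does not get it ``for free'' from the formal derived-category bookkeeping you have assembled.
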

\begin{cor}\label{C01}\cite[Corollary 3.3]{gerko}
If $[C_n]\trianglelefteq \cdots \trianglelefteq[C_1] \trianglelefteq [C_0]$ is a chain in $\mathfrak{G}_0(R)$, then one gets
$$C_n\cong C_0 \otimes_R \mathrm{Hom}_R(C_0,  C_1) \otimes_R \cdots \otimes_R\mathrm{Hom}_R(C_{n-1},  C_n).$$
\end{cor}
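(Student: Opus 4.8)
The plan is to derive the isomorphism by iterating Theorem~\ref{T01} with the trivial module $M=R$. The point is that $R$ is totally $C$-reflexive over \emph{every} semidualizing $R$-module $C$: under the canonical identification $\mathrm{Hom}_R(R,C)\cong C$ the biduality map $\delta^C_R$ becomes the homothety $\chi^R_C$, which is an isomorphism, while $\mathrm{Ext}^{\geqslant1}_R(R,C)=0$ trivially and $\mathrm{Ext}^{\geqslant1}_R(\mathrm{Hom}_R(R,C),C)\cong\mathrm{Ext}^{\geqslant1}_R(C,C)=0$ by the definition of a semidualizing module.

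Granting this, fix $i$ with $0\leqslant i\leqslant n-1$. Since $[C_{i+1}]\trianglelefteq[C_i]$ is a link of the given chain and $R$ is totally $C_i$-reflexive and totally $C_{i+1}$-reflexive, Theorem~\ref{T01} applied with $M=R$, $B=C_i$ and $C=C_{i+1}$ shows that the natural composition map $\mathrm{Hom}_R(R,C_i)\otimes_R\mathrm{Hom}_R(C_i,C_{i+1})\to\mathrm{Hom}_R(R,C_{i+1})$ is an isomorphism; using $\mathrm{Hom}_R(R,-)\cong\mathrm{id}$ this reads $C_i\otimes_R\mathrm{Hom}_R(C_i,C_{i+1})\cong C_{i+1}$. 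Telescoping these isomorphisms for $i=n-1,n-2,\dots,0$ (formally, an induction on $n$, using that each truncation $[C_j]\trianglelefteq\cdots\trianglelefteq[C_0]$ is again a chain) gives
\[
C_n\cong C_{n-1}\otimes_R\mathrm{Hom}_R(C_{n-1},C_n)\cong\cdots\cong C_0\otimes_R\mathrm{Hom}_R(C_0,C_1)\otimes_R\cdots\otimes_R\mathrm{Hom}_R(C_{n-1},C_n),
\]
which is the assertion.

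I do not anticipate a real obstacle: all the substance is in Theorem~\ref{T01}, and what remains is the routine verification that $R$ is totally $C$-reflexive for every semidualizing $C$, together with careful bookkeeping of the identifications $\mathrm{Hom}_R(R,C_i)\cong C_i$ under which the composition map $\varphi$ of Theorem~\ref{T01} turns into evaluation $c\otimes f\mapsto f(c)$. The only mild subtlety worth flagging is that transitivity of $\trianglelefteq$ is not available, so the argument must proceed strictly along the consecutive links of the chain rather than, say, asserting that $C_0$ is totally $C_n$-reflexive.
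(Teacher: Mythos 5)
Your proof is correct, and it is essentially the standard derivation of Gerko's Corollary~3.3 from his Theorem~3.1 (the paper does not reprove it, only cites it): iterate Theorem~\ref{T01} with $M=R$ along the consecutive links of the chain, using that $R$ is totally $C$-reflexive for every semidualizing $C$, to get $C_{i+1}\cong C_i\otimes_R\mathrm{Hom}_R(C_i,C_{i+1})$ at each step, and then substitute recursively. Your remark that one must work link-by-link because $\trianglelefteq$ is not known to be transitive is apt and correctly anticipated.
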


Assume that  $[C_n]\vartriangleleft \cdots \vartriangleleft[C_1] \vartriangleleft [C_0]$
is a chain in $\mathfrak{G}_0(R)$.
For each $i\in [n]$ set $B_i=\mathrm{Hom}_R(C_{i-1}, C_i)$.
For each sequence of integers $\mathbf{i} = \{i_1, \cdots , i_j\}$  with $j \geqslant 1$
and $1\leqslant i_1 <  \cdots  < i_j \leqslant n$, set
$B_{\mathbf{i}}=B_{i_1}\otimes_R\cdots \otimes_R B_{i_j}$.
( $B_{\{i_1\}} =B_{i_1}$ and set $B_\emptyset=C_0$.)

In order to facilitate the discussion, we list some results from \cite{s-w1}. We first recall the following definition.
\begin{defn}
Let $C$ be a semidualizing $R$-module.
The \emph{Auslander class} $\mathcal{A} _C(R)$ with respect to $C$
is the class of all $R$-modules $M$ satisfying the following conditions.

(1) The natural map $ \gamma^C_M: M \longrightarrow \mathrm{Hom}_R(C,  C\otimes_R M)$ is an isomorphism.

(2) $\mathrm{Tor}^R_{\geqslant1}(C,  M)=0=\mathrm{Ext}^{\geqslant1}_R(C,  C\otimes_R M)$.
\end{defn}
\begin{pro}\label{P01}
Assume that  $[C_n]\vartriangleleft \cdots \vartriangleleft[C_1] \vartriangleleft [C_0]$
is a chain in $\mathfrak{G}_0(R)$ such that \\
$\mathfrak{G}_{C_1}(R)\subseteq\mathfrak{G}_{C_2}(R)\subseteq\cdots \subseteq \mathfrak{G}_{C_n}(R)$.
\begin{itemize}
\item[(1)] \cite[Lemma 4.3]{s-w1} For each $i, p$, $1\leqslant i\leqslant i+p\leqslant n$
$$B_{\{i, i+1,\cdots, i+p\}}\cong \mathrm{Hom}_R(C_{i-1}, C_{i+p}).$$
\item[(2)] \cite[Lemma 4.4]{s-w1} If $1\leqslant i< j-1 \leqslant n-1$, then
$$B_{\{i, j\}}\cong \mathrm{Hom}_R(\mathrm{Hom}_R(B_i, C_{j-1}), C_j).$$
\item[(3)] \cite[Lemma 4.5]{s-w1} For each sequence $\mathbf{i}=\{i_1, \cdots , i_j\}\subseteq [n]$,
the $R$-module $B_{\mathbf{i}}$ is a semidualizing.
\item[(4)] \cite[Lemma 4.6]{s-w1} If $\mathbf{i}=\{i_1, \cdots , i_j\}\subseteq [n]$ and $\mathbf{s}=\{s_1, \cdots , s_t\}\subseteq [n]$
are two sequences with $\mathbf{s}\subseteq \mathbf{i}$, then
$[B_{\mathbf{i}}] \trianglelefteq[B_{\mathbf{s}}]$ and
$\mathrm{Hom}_R(B_{\mathbf{s}}, B_{\mathbf{i}})\cong B_{\mathbf{i}\setminus \mathbf{s}}$.
\item[(5)] \cite[Theorem 4.11]{s-w1} If $\mathbf{i}=\{i_1, \cdots , i_j\}\subseteq [n]$ and $\mathbf{s}=\{s_1, \cdots , s_t\}\subseteq [n]$
are two sequences, then the following conditions are equivalent.
\begin{itemize}
\item[(i)] $B_{\mathbf{i}} \in \mathcal{A}_{B_{\mathbf{s}}}(R)$.
\item[(ii)] $B_{\mathbf{s}} \in \mathcal{A}_{B_{\mathbf{i}}}(R)$.
\item[(iii)] The $R$-module $B_{\mathbf{i}}\otimes_R B_{\mathbf{s}} $ is semidualizing.
\item[(iv)] $ \mathbf{i}\cap \mathbf{s}=\emptyset $.
\end{itemize}
\end{itemize}
\end{pro}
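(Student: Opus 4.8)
The plan is to derive all five items as consequences of Gerko's composition isomorphism (Theorem~\ref{T01}, Corollary~\ref{C01}) together with the hypothesis $\mathfrak{G}_{C_1}(R)\subseteq\cdots\subseteq\mathfrak{G}_{C_n}(R)$, which is precisely what makes the relevant reflexivity comparisons available. The key reduction is that the chain condition plus the nesting of the $\mathfrak{G}_{C_i}(R)$ forces, for $i\le j$, that $C_{i-1}$ is simultaneously totally $C_{i-1}$-reflexive and totally $C_j$-reflexive, so Theorem~\ref{T01} applies to $M=C_{i-1}$, $B=C_{i-1}$, $C=C_j$ to give $\mathrm{Hom}_R(C_{i-1},C_{i-1})\otimes_R\mathrm{Hom}_R(C_{i-1},C_j)\cong\mathrm{Hom}_R(C_{i-1},C_j)$, and more usefully, applied telescopically, $B_{\{i,\dots,i+p\}}=\bigotimes_{k=i}^{i+p}\mathrm{Hom}_R(C_{k-1},C_k)\cong\mathrm{Hom}_R(C_{i-1},C_{i+p})$. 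That is (1).

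For (2), I would start from (1) in the form $B_{\{i,j\}}=B_i\otimes_R B_{\{j\}}$ and rewrite $B_{\{j\}}=\mathrm{Hom}_R(C_{j-1},C_j)$; using Fact~\ref{f2} (the Hom of a totally reflexive module is again semidualizing, and the $\trianglelefteq$ relation behaves well under $\mathrm{Hom}_R(-,C_{j-1})$ followed by $\mathrm{Hom}_R(-,C_j)$), one identifies $B_i\otimes_R\mathrm{Hom}_R(C_{j-1},C_j)$ with $\mathrm{Hom}_R(\mathrm{Hom}_R(B_i,C_{j-1}),C_j)$ via the evaluation/composition map, the hypothesis $i<j-1$ ensuring $B_i\in\mathfrak{G}_{C_{j-1}}(R)$ so that $B_i$ is totally $C_{j-1}$-reflexive. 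Item (3) then follows by induction on $|\mathbf{i}|$: split off the largest index $i_j$, write $B_{\mathbf{i}}=B_{\mathbf{i}'}\otimes_R B_{i_j}$ with $\mathbf{i}'=\mathbf{i}\setminus\{i_j\}$, observe $B_{\mathbf{i}'}$ is semidualizing by induction and lies in an appropriate Auslander class relative to $B_{i_j}$ (or directly realize $B_{\mathbf{i}}$ as an iterated $\mathrm{Hom}$ of semidualizing modules built from (1) and (2)), and conclude the tensor product is semidualizing. For (4), with $\mathbf{s}\subseteq\mathbf{i}$ write $B_{\mathbf{i}}\cong B_{\mathbf{s}}\otimes_R B_{\mathbf{i}\setminus\mathbf{s}}$ by definition; then $B_{\mathbf{i}\setminus\mathbf{s}}$ being semidualizing (by (3)) and a standard tensor-evaluation argument give $\mathrm{Hom}_R(B_{\mathbf{s}},B_{\mathbf{i}})\cong\mathrm{Hom}_R(B_{\mathbf{s}},B_{\mathbf{s}}\otimes_R B_{\mathbf{i}\setminus\mathbf{s}})\cong B_{\mathbf{i}\setminus\mathbf{s}}$, and the total reflexivity $[B_{\mathbf{i}}]\trianglelefteq[B_{\mathbf{s}}]$ is read off from this presentation together with (3).

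Finally, for (5) I would prove the cycle (iv)$\Rightarrow$(iii)$\Rightarrow$(i)$\Leftrightarrow$(ii)$\Rightarrow$(iv). If $\mathbf{i}\cap\mathbf{s}=\emptyset$, then $\mathbf{i}\cup\mathbf{s}$ is a sequence in $[n]$ and $B_{\mathbf{i}}\otimes_R B_{\mathbf{s}}\cong B_{\mathbf{i}\cup\mathbf{s}}$, which is semidualizing by (3); that gives (iii). The implication (iii)$\Rightarrow$(i) is the general fact that a tensor product of semidualizing modules $B\otimes_R C$ is semidualizing iff $C\in\mathcal{A}_B(R)$ (symmetrically $B\in\mathcal{A}_C(R)$), which I would invoke from the literature on the Auslander class, and this simultaneously yields (i)$\Leftrightarrow$(ii). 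For (i)$\Rightarrow$(iv), suppose some index $k$ lies in $\mathbf{i}\cap\mathbf{s}$; then $B_k$ is a common tensor factor of $B_{\mathbf{i}}$ and $B_{\mathbf{s}}$, and $B_{\mathbf{i}}\otimes_R B_{\mathbf{s}}$ would contain $B_k\otimes_R B_k$ as a tensor factor, but $B_k\otimes_R B_k$ semidualizing forces $B_k\in\mathcal{A}_{B_k}(R)$, i.e. $\gamma^{B_k}_{B_k}$ an isomorphism and $\mathrm{Tor}^R_{\ge1}(B_k,B_k)=0$, which (since $B_k$ is already semidualizing, hence self-Hom is $R$) forces $B_k$ to be free; as the chain is strict, $[B_k]=[\mathrm{Hom}_R(C_{k-1},C_k)]$ free would collapse $[C_{k-1}]=[C_k]$, a contradiction. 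The main obstacle I anticipate is the bookkeeping in (1)–(2): one must carefully verify that each application of Theorem~\ref{T01} is legitimate, i.e. that the relevant module is totally reflexive with respect to \emph{both} semidualizing modules involved, and this is exactly where the hypothesis $\mathfrak{G}_{C_1}(R)\subseteq\cdots\subseteq\mathfrak{G}_{C_n}(R)$ must be used at each step rather than merely assumed once.
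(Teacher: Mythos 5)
The paper does not give a proof of Proposition~\ref{P01} at all: each item is attributed verbatim to \cite{s-w1} (Lemmas 4.3--4.6 and Theorem 4.11), so your proposal is necessarily a reconstruction from scratch rather than a rival to an argument in the text. Items (1) and (2) can indeed be recovered by careful telescoping of Theorem~\ref{T01}, although note that your opening move with $M=B=C_{i-1}$ just reproves $R\otimes_R\mathrm{Hom}_R(C_{i-1},C_j)\cong\mathrm{Hom}_R(C_{i-1},C_j)$ and contains nothing; the real content is in the inductive step, where at each fold you need the relevant module to be simultaneously totally $C_{k}$- and totally $C_{k+1}$-reflexive, which is exactly where the nesting $\mathfrak{G}_{C_1}(R)\subseteq\cdots\subseteq\mathfrak{G}_{C_n}(R)$ must be invoked.

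The genuine trouble is with (3), (4) and (5), where your sketch is circular or incomplete. For (3), your first route, ``$B_{\mathbf{i}'}$ lies in an appropriate Auslander class relative to $B_{i_j}$,'' is precisely what (5)(iv)$\Rightarrow$(i) asserts, and your proof of (5) in turn uses (3) to conclude $B_{\mathbf{i}\cup\mathbf{s}}$ is semidualizing; you cannot have each prove the other. Your fallback, ``directly realize $B_{\mathbf{i}}$ as an iterated $\mathrm{Hom}$ built from (1) and (2),'' is the right idea but is not carried out: for a non-contiguous $\mathbf{i}$ one needs a genuine generalization of (2) that converts a general $B_{\mathbf{i}'}\otimes_R B_{i_j}$ into a single $\mathrm{Hom}$, and checking the reflexivity hypotheses for each fold uses (4)-type facts; in \cite{s-w1} these lemmas are established by a joint induction, and asserting them one at a time with hand-waved Auslander memberships does not close that loop. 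The same circularity hits (4): the isomorphism $\mathrm{Hom}_R(B_\mathbf{s}, B_\mathbf{s}\otimes_R B_{\mathbf{i}\setminus\mathbf{s}})\cong B_{\mathbf{i}\setminus\mathbf{s}}$ by ``a standard tensor-evaluation argument'' is just the statement $B_{\mathbf{i}\setminus\mathbf{s}}\in\mathcal{A}_{B_\mathbf{s}}(R)$, which is (5), not yet available. Finally, in (5) the implication (i)$\Rightarrow$(iv) is not justified as written: if $k\in\mathbf{i}\cap\mathbf{s}$ you assert that $B_k\otimes_R B_k$, being a ``tensor factor'' of the semidualizing module $B_{\mathbf{i}}\otimes_R B_{\mathbf{s}}$, must itself be semidualizing; but a tensor factor of a semidualizing module is not automatically semidualizing, and one cannot simply peel off a factor and inherit the Auslander membership. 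That step is the hard part of \cite[Theorem 4.11]{s-w1} and needs a different argument than the one you propose.
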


At the end of this section we recall the definition of trivial extension ring.
Note that this notion is the main key in
the proof of the converse of Sharp's result \cite{s}, which is given by Foxby \cite{f} and Reiten \cite{reiten}.
\begin{f}\label{f5}
For an $R$-module $M$, the \emph{trivial extension} of $R$ by $M$ is the ring $R \ltimes M$,
described as follows. As an $R$-module, we have $R \ltimes M=R\oplus M$. The multiplication
is defined by $(r, m)(r', m')=(rr', rm'+r'm)$.
Note that the composition $R\rightarrow R \ltimes M\rightarrow R$ of the natural homomorphisms is the identity map of $R$.

Note that, for a semidualizing $R$-module $C$,
the trivial extension ring $R\ltimes C$ is a commutative noetherian local ring.
If $R$ is Cohen-Macaulay then $R\ltimes C$ is Cohen-Macaulay too.
For more information about the trivial extension rings one may see, e.g., \cite[Section 2]{jls-w}.
\end{f}

%%%%%%%%%%%%%%%%%%%%%%%%%%%%%%%%%%%%%%%%%%%%%%%%%%%%%%%%%%%%%%%%%%%%%%%%%%%%%%%%%%%%%%%%%%%%%%%%%%%%%%%%%%%%%%%%%%%%%%%%%%%%%%%%
\section{Results} \label{sec3}
%===================================

This section is devoted to the main result, Theorem~\ref{T42}, which extends the results
of Jorgensen et.\!~al. \cite[Theorem 3.2]{jls-w} and of Foxby \cite{f} and  Reiten \cite{reiten}.

For a semidualizing $R$-module $C$, set ${(-)}^{\dag_C}=\mathrm{Hom}_R(-,  C)$.
The following notations are taken from \cite{s-w1}.

\begin{defn}\label{suitable}
Let  $[C_n]\vartriangleleft \cdots \vartriangleleft[C_1] \vartriangleleft [C_0]$ be a chain in $\mathfrak{G}_0(R)$ of length $n$.
For each sequence of integers $\mathbf{i} = \{i_1, \cdots , i_j\}$ such that $j \geqslant 0$ and $1\leqslant i_1 <  \cdots  < i_j \leqslant n$, set
$C_{\mathbf{i}}= C_0^{\dag_{C_{i_1}}\dag_{C_{i_2}}\cdots\dag_{C_{i_j}}}$.
(When $j = 0$, set  $C_{\mathbf{i}} = C_\emptyset= C_0$ ).

We say that the above chain is \emph{suitable} if  $C_0=R$ and
$C_{\mathbf{i}}$ is totally $C_t$-reflexive, for all
$\mathbf{i}$ and $t$ with $i_j\leqslant t \leqslant n$.

Note that if $[C_n]\vartriangleleft \cdots \vartriangleleft[C_1] \vartriangleleft [R]$
is a suitable chain, then $C_{\mathbf{i}}$ is a semidualizing $R$-module for each
$\mathbf{i}\subseteq [n]$.
Also, for each sequence of integers $\{ x_1, \cdots, x_m\}$ with
$1\leqslant x_1<\cdots <x_m\leqslant n$,  the sequence
$[C_{x_m}]\vartriangleleft \cdots \vartriangleleft[C_{x_1}] \vartriangleleft [R]$
is a suitable chain in $\mathfrak{G}_0(R)$ of length $m$.
\end{defn}

Sather-Wagstaff, in \cite[Theorem 3.3]{s-w1}, proves that if $\mathfrak{G}_0(R)$ admits a chain
$[C_n]\vartriangleleft \cdots \vartriangleleft[C_1] \vartriangleleft [C_0]$ such that
$\mathfrak{G}_{C_0}(R)\subseteq\mathfrak{G}_{C_1}(R)\subseteq\cdots \subseteq \mathfrak{G}_{C_n}(R)$, then $|\mathfrak{G}_0(R)|\geqslant 2^n$.
Indeed, the classes $[C_{\mathbf{i}}]$, which are parameterized by the
allowable sequences $\mathbf{i}$, are precisely the $2^n$
classes constructed in the proof of \cite[Theorem 3.3]{s-w1}.

\begin{thm}\cite[Theorem 4.7]{s-w1}\label{T03}
Let $\mathfrak{G}_0(R)$ admit  a chain
$[C_n]\vartriangleleft \cdots \vartriangleleft[C_1] \vartriangleleft [C_0]$
such that
$\mathfrak{G}_{C_1}(R)\subseteq\mathfrak{G}_{C_2}(R)\subseteq\cdots \subseteq \mathfrak{G}_{C_n}(R)$.  If $C_0=R$,
then the $R$-modules $B_{\mathbf{i}}$ are precisely
the $2^n$ semidualizing modules constructed in \cite[Theorem 3.3]{s-w1}.
\end{thm}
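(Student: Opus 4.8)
The plan is to identify the two families of semidualizing modules -- the $B_{\mathbf{i}}$ introduced just before Proposition~\ref{P01}, and the $2^n$ modules $C_{\mathbf{i}}$ constructed in the proof of \cite[Theorem 3.3]{s-w1} -- and show that they agree up to isomorphism, indexed by the same set of allowable sequences $\mathbf{i}\subseteq[n]$. Recall that, in the notation of Definition~\ref{suitable} specialized to $C_0=R$, we have $C_{\{i\}}=R^{\dag_{C_i}}=\mathrm{Hom}_R(R,C_i)\cong C_i$; and more generally $C_{\mathbf{i}}$ is obtained by iterating the dualfunctors $(-)^{\dag_{C_{i_1}}}$, $(-)^{\dag_{C_{i_2}}}$, \dots along the sequence. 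On the other side, $B_i=\mathrm{Hom}_R(C_{i-1},C_i)$ and $B_{\mathbf{i}}=B_{i_1}\otimes_R\cdots\otimes_R B_{i_j}$.

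First I would set up an induction on $j=|\mathbf{i}|$, the base case $j=0$ being $B_\emptyset=C_0=R=C_\emptyset$ and $j=1$ being $C_{\{i\}}\cong C_i$, which must be matched with $B_{\{i\}}=B_i=\mathrm{Hom}_R(C_{i-1},C_i)$; here I expect the bookkeeping to require care, since the two indexing conventions are genuinely different (one tracks the $C_i$ themselves, the other the relative Hom's $\mathrm{Hom}_R(C_{i-1},C_i)$), so the claimed bijection of families is not the identity on indices. The main engine for the inductive step is Corollary~\ref{C01} together with Proposition~\ref{P01}: by Corollary~\ref{C01}, $C_n\cong C_0\otimes_R\mathrm{Hom}_R(C_0,C_1)\otimes_R\cdots\otimes_R\mathrm{Hom}_R(C_{n-1},C_n)$, and more generally, applying the corollary to the sub-chain indexed by $\mathbf{i}$ (which is again a chain by Fact~\ref{f2} and the hypothesis on the nested classes $\mathfrak{G}_{C_i}(R)$), one expresses each relevant $C_{\mathbf{i}}$ as a tensor product of the appropriate $B_k$'s. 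Then Proposition~\ref{P01}(1) collapses the telescoping tensor products $B_{\{i,i+1,\dots,i+p\}}\cong\mathrm{Hom}_R(C_{i-1},C_{i+p})$, and Proposition~\ref{P01}(3),(4) guarantee that everything in sight is semidualizing and that the Hom-cancellation rules behave as needed.

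Concretely, I would argue that for an allowable sequence $\mathbf{i}=\{i_1<\cdots<i_j\}$, the module $C_{\mathbf{i}}$ from \cite{s-w1}'s construction is isomorphic to $B_{\mathbf{j}}$ where $\mathbf{j}$ is the complementary/derived sequence determined by the gaps of $\mathbf{i}$ in $[n]$ -- that is, one unwinds each dual $(-)^{\dag_{C_{i_k}}}$ using Theorem~\ref{T01} (the isomorphism $\mathrm{Hom}_R(M,B)\otimes_R\mathrm{Hom}_R(B,C)\cong\mathrm{Hom}_R(M,C)$ for $M$ totally $B$- and totally $C$-reflexive, whose hypotheses are exactly what "suitable" supplies) and Proposition~\ref{P01}(1)--(2), and reads off a tensor product of $B_k$'s. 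Since there are exactly $2^n$ allowable sequences on each side and the assignment is built from the invertible operations $\mathrm{Hom}_R(-,C_t)$ and $(-\otimes_R\mathrm{Hom}_R(C_{t-1},C_t))$, it is a bijection, and by antisymmetry of $\trianglelefteq$ (Fact~\ref{f2}) distinct allowable sequences give non-isomorphic modules, so no collapsing occurs. The main obstacle I anticipate is purely combinatorial/notational: matching the recursive description of the $C_{\mathbf{i}}$ in the proof of \cite[Theorem 3.3]{s-w1} with the tensor-product description of the $B_{\mathbf{i}}$ requires choosing the right reindexing and then verifying, at each stage, that the relevant reflexivity hypotheses of Theorem~\ref{T01} hold -- which is where the full force of the "suitable chain" hypothesis and the containments $\mathfrak{G}_{C_1}(R)\subseteq\cdots\subseteq\mathfrak{G}_{C_n}(R)$ must be invoked; the homological content itself is entirely contained in Theorem~\ref{T01}, Corollary~\ref{C01}, and Proposition~\ref{P01}.
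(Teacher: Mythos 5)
This result is quoted here from \cite[Theorem 4.7]{s-w1} and the present paper does not reproduce a proof, so the comparison can only be against the strategy implicit in the supporting lemmas that the authors \emph{do} state (Theorem~\ref{T01}, Corollary~\ref{C01}, Proposition~\ref{P01}, and the parity-alternating manipulations that reappear in Lemma~\ref{L41}). Measured against that, your overall plan is on the right track, but two points need repair.

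First, the bijection $\mathbf{i}\mapsto\mathbf{s}(\mathbf{i})$ with $C_{\mathbf{i}}\cong B_{\mathbf{s}(\mathbf{i})}$ is not a ``complement/gap'' map and your description of it is too loose to certify bijectivity. Working it out from the tools you correctly identify: $C_\emptyset = R = B_\emptyset$; $C_{\{i_1\}}\cong C_{i_1}\cong B_{\{1,\dots,i_1\}}$ by Proposition~\ref{P01}(1); and then for $t>\max\mathbf{i}$, using $C_t\cong B_{[1,t]}$ together with the Hom-cancellation rule in Proposition~\ref{P01}(4), one gets $C_{\mathbf{i}\cup\{t\}}=(C_{\mathbf{i}})^{\dag_{C_t}}\cong\mathrm{Hom}_R\bigl(B_{\mathbf{s}(\mathbf{i})},B_{[1,t]}\bigr)\cong B_{[1,t]\setminus\mathbf{s}(\mathbf{i})}$. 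So the recursion is $\mathbf{s}(\mathbf{i}\cup\{t\})=[1,t]\setminus\mathbf{s}(\mathbf{i})$, producing the alternating union of intervals $[1,i_1]\cup(i_2,i_3]\cup\cdots$ (for $|\mathbf{i}|$ odd) or $(i_1,i_2]\cup(i_3,i_4]\cup\cdots$ (for $|\mathbf{i}|$ even). This \emph{is} a bijection on $2^{[n]}$ — invert it by peeling off the top interval, noting that $1\in\mathbf{s}$ exactly when $|\mathbf{i}|$ is odd — but that is a genuinely different map from ``the gaps of $\mathbf{i}$,'' and it is the real content you need to nail down. The workhorse is Proposition~\ref{P01}(4) applied repeatedly with $\mathbf{s}(\mathbf{i})\subseteq[1,t]$; Theorem~\ref{T01} and Corollary~\ref{C01} only supply the base case.

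Second, your appeal to antisymmetry of $\trianglelefteq$ to conclude ``no collapsing occurs'' does not work: antisymmetry only rules out coincidences among \emph{comparable} classes, and distinct $\mathbf{i},\mathbf{i}'$ need not give comparable $B_{\mathbf{i}},B_{\mathbf{i}'}$. You do not need it, though. Theorem 3.3 of \cite{s-w1} already establishes that the $2^n$ modules $C_{\mathbf{i}}$ are pairwise non-isomorphic; once you have $C_{\mathbf{i}}\cong B_{\mathbf{s}(\mathbf{i})}$ with $\mathbf{s}$ a bijection, the set equality $\{B_{\mathbf{j}}\}_{\mathbf{j}\subseteq[n]}=\{C_{\mathbf{i}}\}_{\mathbf{i}\subseteq[n]}$ is immediate and the distinctness of the $B_{\mathbf{j}}$'s is inherited. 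A final minor point: the theorem as stated uses the hypothesis $\mathfrak{G}_{C_1}(R)\subseteq\cdots\subseteq\mathfrak{G}_{C_n}(R)$, not ``suitable chain''; Remark~\ref{R40} says these are interchangeable for the purposes at hand, but your write-up should invoke the hypothesis that the statement actually carries.
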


\begin{rem}\label{R40}
In Proposition~\ref{P01} and Theorem~\ref{T03},
if we replace the assumption of existence of a chain
$[C_n]\vartriangleleft \cdots \vartriangleleft[C_1] \vartriangleleft [C_0]$
in $\mathfrak{G}_0(R)$ such that
$\mathfrak{G}_{C_1}(R)\subseteq\mathfrak{G}_{C_2}(R)\subseteq\cdots \subseteq \mathfrak{G}_{C_n}(R)$
by the existence of a suitable chain, then the assertions hold true as well.
\end{rem}

%%%%%%%%%%%%%%%%%%%%%%%

The next lemma and  proposition give us sufficient tools to treat Theorem~\ref{T42}.

\begin{lem}\label{L41}
Assume that $R$ admits a suitable chain
$[C_n]\vartriangleleft \cdots \vartriangleleft[C_1] \vartriangleleft [C_0]=[R]$  in $\mathfrak{G}_0(R)$.
Then for any $k\in [n]$, there exists a suitable chain
\begin{equation}\label{e40}
[C_n]\vartriangleleft \cdots \vartriangleleft[C_{k+1}]\vartriangleleft[C_k]\vartriangleleft[C_1^{\dag_{C_k}}]\vartriangleleft \cdots\vartriangleleft[C_{k-2}^{\dag_{C_k}}] \vartriangleleft[C_{k-1}^{\dag_{C_k}}] \vartriangleleft [R]
\end{equation}
in $\mathfrak{G}_0(R)$ of length $n$.
\end{lem}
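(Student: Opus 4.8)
The goal is to reorder the suitable chain by "pivoting" at position $k$: we keep $[C_n]\vartriangleleft\cdots\vartriangleleft[C_k]$ untouched, and replace the initial segment $[C_{k-1}]\vartriangleleft\cdots\vartriangleleft[C_1]\vartriangleleft[R]$ by its image $[C_{k-1}^{\dag_{C_k}}]\vartriangleleft\cdots\vartriangleleft[C_1^{\dag_{C_k}}]\vartriangleleft[R]$ under $\mathrm{Hom}_R(-,C_k)$. The plan is to verify two things: (1) the displayed sequence \eqref{e40} is genuinely a chain of strict inequalities in $\mathfrak{G}_0(R)$, and (2) it is \emph{suitable} in the sense of Definition~\ref{suitable}, i.e. its first term is $R$ (clear, as $C_\emptyset=C_0=R$ is unchanged) and every module $C_{\mathbf i}$ formed from the new chain is totally $C_t$-reflexive for all allowable $t$.

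First I would establish the reflexivity/duality relations among the modules $C_i^{\dag_{C_k}}$ for $i\le k-1$. Since the original chain is suitable, each $C_i=C_\emptyset^{\dag_{C_i}}$ is totally $C_k$-reflexive for $i\le k$ (take $\mathbf i=\{i\}$, $t=k$); hence $C_i^{\dag_{C_k}}$ is a semidualizing module with $[C_k]\trianglelefteq[C_i^{\dag_{C_k}}]$, and by biduality $(C_i^{\dag_{C_k}})^{\dag_{C_k}}\cong C_i$. For the strict inequalities in the first block, note that $C_i$ and $C_{i-1}$ are both totally $C_k$-reflexive with $[C_i]\trianglelefteq[C_{i-1}]$ (using $C_i=C_{i-1}^{\dag_{C_i}}$ together with suitability to get total $C_i$-reflexivity), so by Fact~\ref{f2} (the equivalence from \cite[Proposition 3.9]{fs-w1}) applied with $A=C_{i-1}$, $B=C_i$, $C=C_k$ we get $[C_i^{\dag_{C_k}}]\trianglelefteq[C_{i-1}^{\dag_{C_k}}]$; the inequality is strict because dualizing into $C_k$ is injective on those classes that are totally $C_k$-reflexive (apply $(-)^{\dag_{C_k}}$ again and use biduality). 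The junction inequality $[C_k]\vartriangleleft[C_1^{\dag_{C_k}}]$ follows from $[C_k]\trianglelefteq[C_1^{\dag_{C_k}}]$ above, with strictness again by biduality together with $[C_1^{\dag_{C_k}}]\ne[C_k]$ (otherwise $C_1\cong R$, contradicting $[C_1]\vartriangleleft[R]$).

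The harder part is suitability, and this is where I expect the main obstacle. The modules $C_{\mathbf j}$ attached to the new chain split according to whether the index set $\mathbf j$ lands entirely in the old "tail" $\{k,\ldots,n\}$, entirely in the new "head" $\{1,\ldots,k-1\}$ (now carrying the twist $\dag_{C_k}$), or straddles $k$. For the pure-tail case there is nothing new. For a head index set $\mathbf i=\{i_1<\cdots<i_j\}\subseteq[k-1]$ I would show, by induction on $j$ using Theorem~\ref{T01} / Corollary~\ref{C01} and the biduality isomorphisms above, that the new $C_{\mathbf i}$ is isomorphic to $(C_{\mathbf i}^{\mathrm{old}})^{\dag_{C_k}}$ — intuitively, iterated duality into $C_{i_1},\ldots,C_{i_j}$ of $R$, then into $C_k$, can be rearranged because all the relevant modules are totally reflexive with respect to all the relevant semidualizers. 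Then total $C_t$-reflexivity of this module for $i_j\le t\le k$ reduces, via Fact~\ref{f2} and Gerko's tensor-decomposition, to the total $C_t$-reflexivity of $C_{\mathbf i}^{\mathrm{old}}$ and of the pieces $\mathrm{Hom}_R(C_{s-1},C_s)$, all of which come from suitability of the original chain; and for $t\in\{k+1,\ldots,n\}$ one lifts along $[C_k]\trianglelefteq[C_t]$. The straddling case is handled by composing: write $\mathbf j=\mathbf i\cup\{k,\mathbf l\}$ with $\mathbf i\subseteq[k-1]$ and $\mathbf l\subseteq\{k+1,\ldots,n\}$, first dualize $R$ into the $C_i$'s (head), then into $C_k$, then into the $C_l$'s (tail); peel off the tail dualities using suitability of the original chain applied to the sub-chain $[C_n]\vartriangleleft\cdots\vartriangleleft[C_k]$, reducing to the head case already treated. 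Throughout, the key technical lever is that $\mathrm{Hom}_R(-,C_k)$ is an involution on the sub-poset of classes totally $C_k$-reflexive, and it intertwines the reflexivity order by Fact~\ref{f2}; assembling these observations into a clean induction, and bookkeeping the three cases uniformly, is the only real work.
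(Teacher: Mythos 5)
Your overall strategy matches the paper's: check the displayed sequence is a chain via Fact~\ref{f2}, then prove suitability by rewriting each $C_{\mathbf{i}}$ built from the new chain as a suitable module of the \emph{original} chain. However, the key formula you propose to prove by induction is false. You claim that for a head index set $\mathbf{i}$ the new $C_{\mathbf{i}}$ is isomorphic to $(C_{\mathbf{i}}^{\mathrm{old}})^{\dag_{C_k}}$; in fact the answer depends on the parity of $|\mathbf{i}|$. Writing $C'_r=C_r^{\dag_{C_k}}$ and listing $i_1>i_2>\cdots>i_j$ in $[k-1]$, one gets from Proposition~\ref{P01}(4) (using $C_r^{\dag_{C_k}}\cong B_{\{r+1,\ldots,k\}}$) that
\[
R^{\dag_{C'_{i_1}}\dag_{C'_{i_2}}}=\mathrm{Hom}_R\bigl(C_{i_1}^{\dag_{C_k}},C_{i_2}^{\dag_{C_k}}\bigr)\cong C_{i_2}^{\dag_{C_{i_1}}}=R^{\dag_{C_{i_2}}\dag_{C_{i_1}}},
\]
with \emph{no} trailing $\dag_{C_k}$; each further $\dag_{C'_{i_m}}$ introduces a $\dag_{C_k}$ that cancels against the previous one, so the trailing $\dag_{C_k}$ survives only when $j$ is odd:
\[
R^{\dag_{C'_{i_1}}\cdots\dag_{C'_{i_j}}}\cong
\begin{cases}
R^{\dag_{C_{i_j}}\cdots\dag_{C_{i_1}}} & j\ \text{even},\\[2pt]
R^{\dag_{C_{i_j}}\cdots\dag_{C_{i_1}}\dag_{C_k}} & j\ \text{odd}.
\end{cases}
\]
Thus your proposed identity is already wrong at $j=2$, and the induction you describe would not close.

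The parity issue also matters for the suitability check when the tail part $\mathbf{u}$ is empty: for $t<k$ one must verify reflexivity with respect to the \emph{new} module $C_{k-t}^{\dag_{C_k}}$, not the original $C_t$, and the paper treats this by a separate tensor-decomposition of $C_l^{\dag_{C_k}}$ combined with Proposition~\ref{P01}(4); your plan conflates the two. A further minor slip: applying Fact~\ref{f2} with $A=C_{i-1}$, $B=C_i$, $C=C_k$ gives $[C_{i-1}^{\dag_{C_k}}]\trianglelefteq[C_i^{\dag_{C_k}}]$, not $[C_i^{\dag_{C_k}}]\trianglelefteq[C_{i-1}^{\dag_{C_k}}]$ as you wrote (fortunately the former is what the new chain actually requires, so this is only a sign error in the exposition).
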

\begin{proof}
For $i,j$, $0\leqslant j<i\leqslant k$, as $[C_i]\vartriangleleft[C_j]$
one has
$[C_{j}^{\dag_{C_k}}]\vartriangleleft[C_{i}^{\dag_{C_k}}]$.
As $[C_k]\neq[C_{i}^{\dag_{C_k}}]$, one gets
$[C_t]\vartriangleleft[C_{i}^{\dag_{C_k}}]$ for each $t$, $k\leqslant t\leqslant n$.
Thus  (\ref{e40}) is a chain in $\mathfrak{G}_0(R)$ of length $n$.

Next, we show that (\ref{e40}) is a suitable chain.
For $r, t \in\{0, 1, \cdots , n\}$ and a sequence $\{x_1,\cdots, x_m\}$ of integers with
$r\leqslant x_1<\cdots < x_m\leqslant t$, repeated use of Theorem~\ref{T01} implies
$$C_r^{\dag_{C_t}}\cong C_r^{\dag_{C_{x_1}}}\otimes_R C_{x_1}^{\dag_{C_{x_2}}}\otimes_R \cdots \otimes_R C_{x_m}^{\dag_{C_t}}.$$

For each $r$, $0<r<k$, set $C'_r=C_r^{\dag_{C_k}}$.
If $\mathbf{i} = \{i_1, \cdots , i_j\}$ and $\mathbf{u} = \{u_1, \cdots , u_s\}$
are sequences of integers such that $j, s\geqslant 0$ and
$1\leqslant i_j <  \cdots  < i_1 < k\leqslant u_1< \cdots  < u_s\leqslant n$, then we set
$$C_{\mathbf{i},\mathbf{u}}=
C_0^{\dag_{C'_{i_1}}\cdots\dag_{C'_{i_j}}\dag_{C_{u_1}}\cdots\dag_{C_{u_s}}}.$$
When $s=0$ (resp., $j=0$ or $j=0=s$) we have $C_{\mathbf{i}, \mathbf{u}}=C_{\mathbf{i}, \emptyset}$
(resp.,  $C_{\mathbf{i}, \mathbf{u}}=C_{\emptyset, \mathbf{u}}$ or
$C_{\mathbf{i}, \mathbf{u}}=C_{\emptyset, \emptyset} =C_0$).

By Proposition~\ref{P01}(4) and Remark~\ref{R40}, one has
$C_0^{\dag_{C'_{i_1}}\dag_{C'_{i_2}}}\cong\mathrm{Hom}_R(C_{i_1}^{\dag_{C_k}}, C_{i_2}^{\dag_{C_k}})\cong C_{i_2}^{\dag_{C_{i_1}}}$ and so
$C_0^{\dag_{C'_{i_1}}\dag_{C'_{i_2}}\dag_{C'_{i_3}}}\cong
 \mathrm{Hom}_R(C_{i_2}^{\dag_{C_{i_1}}}, C_{i_3}^{\dag_{C_k}})\cong C_{i_3}^{\dag_{C_{i_2}}}\otimes_R C_{i_1}^{\dag_{C_k}}$.
By proceeding in this way one obtains the following isomorphism
\begin{equation}\label{e41}
C_0^{\dag_{C'_{i_1}}\cdots\dag_{C'_{i_j}}}\cong\left\{ \begin{array}{ll}
C_{i_j}^{\dag_{C_{i_{j-1}}}}\otimes_R C_{i_{j-2}}^{\dag_{C_{i_{j-3}}}}\otimes_R\cdots\otimes_R C_{i_2}^{\dag_{C_{i_1}}} & \mathrm{if}\ j\ \mathrm{is\ even}, \\
  &   \\
C_{i_j}^{\dag_{C_{i_{j-1}}}}\otimes_R C_{i_{j-2}}^{\dag_{C_{i_{j-3}}}}\otimes_R\cdots\otimes_R C_{i_1}^{\dag_{C_k}} &  \mathrm{if}\ j\ \mathrm{is\ odd}.
\end{array} \right.
\end{equation}
Therefore, by Proposition~\ref{P01}(2) and Remark~\ref{R40},
$$C_0^{\dag_{C'_{i_1}}\cdots\dag_{C'_{i_j}}}\cong\left\{ \begin{array}{ll}
C_0^{\dag_{C_{i_j}}\cdots\dag_{C_{i_1}}} & \mathrm{if}\ j\ \mathrm{is\ even}, \\
  &   \\
C_0^{\dag_{C_{i_j}}\cdots\dag_{C_{i_1}}\dag_{C_k}} &  \mathrm{if}\ j\ \mathrm{is\ odd},
\end{array} \right.$$
and thus
$$C_{\mathbf{i}, \mathbf{u}}\cong\left\{ \begin{array}{ll}
C_0^{\dag_{C_{i_j}}\cdots\dag_{C_{i_1}}\dag_{C_{u_1}}\cdots\dag_{C_{u_s}}} & \mathrm{if}\ j\ \mathrm{is\ even}, \\
  &   \\
C_0^{\dag_{C_{i_j}}\cdots\dag_{C_{i_1}}\dag_{C_k}\dag_{C_{u_1}}\cdots\dag_{C_{u_s}}} &  \mathrm{if}\ j\ \mathrm{is\ odd}.
\end{array} \right.$$
Hence, by assumption, $[C_t]\trianglelefteq[C_{\mathbf{i}, \mathbf{u}}]$ for all $t$, $t\geqslant u_s$.
If $s=0$, then
$C_{\mathbf{i}, \mathbf{u}}=C_{\mathbf{i}, \emptyset}=C_0^{\dag_{C'_{i_1}}\cdots\dag_{C'_{i_j}}}$. \\
On the other hand, for each $l$, $1\leqslant l\leqslant i_j$,  we have
$$C_l^{\dag_{C_k}}\cong C_l^{\dag_{C_{i_j}}}\otimes_R C_{i_j}^{\dag_{C_{i_{j-1}}}}\otimes_R \cdots
\otimes_R C_{i_3}^{\dag_{C_{i_2}}}\otimes_R C_{i_2}^{\dag_{C_{i_1}}}\otimes_R C_{i_1}^{\dag_{C_k}}.$$
Thus, by Proposition~\ref{P01}(4) and (\ref{e41}),
$[C_l^{\dag_{C_k}}]\trianglelefteq [C_{\mathbf{i}, \mathbf{u}}]$.
Hence the chain (\ref{e40}) is suitable.
\end{proof}

\begin{rem}\label{R42}
Let $R$ be Cohen-Macaulay and  $[C_n]\vartriangleleft \cdots \vartriangleleft[C_1] \vartriangleleft [C_0]$ be a suitable chain in $\mathfrak{G}_0(R)$.
For any $k$,  $1\leqslant k\leqslant n$, set $R_k=R\ltimes C_{k-1}^{\dag_{C_k}}$ the trivial extension of $R$ by $C_{k-1}^{\dag_{C_k}}$.
Then $R_k$ is totally $C_l^{\dag_{C_k}}$-reflexive and totally $C_t$-reflexive $R$-module for all $l, t$ with $1\leqslant l< k \leqslant t\leqslant n$.
Set
$$C_l^{(k)}=\left\{ \begin{array}{ll}
\mathrm{Hom}_R(R_k, C_{k-1-l}^{\dag_{C_k}}) &  \text{if}\ \ 0\leqslant l< k-1\\
\mathrm{Hom}_R(R_k, C_{l+1}) &   \text{if}\ \ k-1\leqslant l\leqslant n-1 \hspace{0.1cm}.
\end{array} \right.$$
Then, by Remark~\ref{R1}, $C_l^{(k)}$ is a semidualizing $R_k$-module for all $l$, $0\leqslant l\leqslant n-1$.
\end{rem}

\begin{pro}\label{P41}
Under the hypotheses of Remark \ref{R42}, for all $k$, $1\leqslant k\leqslant n$,
$$[C_{n-1}^{(k)}] \vartriangleleft  \cdots  \vartriangleleft [C_1^{(k)}] \vartriangleleft  [R_k]$$
is a suitable chain in $\mathfrak{G}_0(R_k)$ of length $n-1$.
\end{pro}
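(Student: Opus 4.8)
The plan is to transport the suitable chain of Lemma~\ref{L41} along the ring map $R\to R_k$ via the functor $\mathrm{Hom}_R(R_k,-)$, using Remark~\ref{R1} to see that each application of this functor sends a semidualizing $R$-module (to which $R_k$ is totally reflexive) to a semidualizing $R_k$-module. First I would invoke Lemma~\ref{L41} to replace the given suitable chain by the chain~(\ref{e40}), so that in the new indexing the $k$-th term $C_{k-1}^{\dag_{C_k}}$ sits immediately below $C_k$; this is exactly the module by which we form $R_k$. Then the key point is a standard base-change isomorphism for trivial extensions: for a semidualizing $R$-module $B$ with $R_k=R\ltimes C_{k-1}^{\dag_{C_k}}$ totally $B$-reflexive, one has $\mathrm{Hom}_R(R_k,B)$ semidualizing over $R_k$ (Remark~\ref{R1}, with $g=0$ since $R_k$ is totally $B$-reflexive so $\mathrm{Ext}^{\geqslant 1}_R(R_k,B)=0$ and $\mathrm{G}_B\text{-}\dim_R(R_k)=0$), and moreover $\mathrm{Hom}_R(-,B)$ applied across the chain is compatible with the duals taken over $R_k$. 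Concretely, the modules $C_l^{(k)}$ of Remark~\ref{R42} are precisely the images under $\mathrm{Hom}_R(R_k,-)$ of the consecutive terms of the chain~(\ref{e40}) below $C_k$, re-indexed.

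Next I would establish the reflexivity relations $[C^{(k)}_{l}]\vartriangleleft[C^{(k)}_{l-1}]$ for $1\leqslant l\leqslant n-1$. The mechanism is: if $[B]\trianglelefteq[B']$ over $R$ and $R_k$ is simultaneously totally $B$-reflexive and totally $B'$-reflexive, then applying $\mathrm{Hom}_R(R_k,-)$ preserves the order, i.e. $[\mathrm{Hom}_R(R_k,B)]\trianglelefteq[\mathrm{Hom}_R(R_k,B')]$ over $R_k$; this is the kind of statement one proves by the standard argument that $\mathrm{Hom}_R(R_k,-)$ and $\mathrm{Hom}_{R_k}(-,\mathrm{Hom}_R(R_k,B))$ intertwine via Hom-tensor adjunction, together with the vanishing of the relevant Ext groups coming from total reflexivity of $R_k$. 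Strictness $[C^{(k)}_l]\neq[C^{(k)}_{l-1}]$ follows because $\mathrm{Hom}_R(R_k,-)$ is, in the relevant range, faithful enough: if two consecutive $C^{(k)}$'s were isomorphic as $R_k$-modules, restricting scalars along $R\to R_k\to R$ (whose composite is the identity, see \ref{f5}) would force the corresponding terms of~(\ref{e40}) to be isomorphic as $R$-modules, contradicting that~(\ref{e40}) has length $n$.

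Finally I would verify the suitability condition for the chain $[C^{(k)}_{n-1}]\vartriangleleft\cdots\vartriangleleft[C^{(k)}_1]\vartriangleleft[R_k]$: one needs $C^{(k)}_0=R_k$, which is immediate from the definition of $C^{(k)}_{k-1}=\mathrm{Hom}_R(R_k,C_k)$... more precisely one checks $\mathrm{Hom}_R(R_k,C_{k-1}^{\dag_{C_k}})\cong$ the appropriate unit, and that the iterated $R_k$-duals $\bigl(C^{(k)}_0\bigr)^{\dag_{C^{(k)}_{i_1}}\cdots\dag_{C^{(k)}_{i_j}}}$ are totally $C^{(k)}_t$-reflexive for $i_j\leqslant t\leqslant n-1$. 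The strategy here is to show, by the base-change isomorphism above and induction on $j$, that each such iterated $R_k$-dual equals $\mathrm{Hom}_R(R_k,X_{\mathbf{i}})$ where $X_{\mathbf{i}}$ is one of the modules $C_{\mathbf{i}}$ appearing in the definition of suitability for the chain~(\ref{e40}) over $R$; then total $C^{(k)}_t$-reflexivity over $R_k$ is deduced from total $C_{?}$-reflexivity over $R$ (which holds because~(\ref{e40}) is suitable) by another application of the principle that $\mathrm{Hom}_R(R_k,-)$ preserves total reflexivity when $R_k$ is itself totally reflexive over the relevant semidualizing modules. The main obstacle I anticipate is precisely the bookkeeping in this last step: matching up the two-sided indexing $C^{(k)}_l$ (which runs through $C_{k-1-l}^{\dag_{C_k}}$ for $l<k-1$ and through $C_{l+1}$ for $l\geqslant k-1$) with the single chain~(\ref{e40}), and checking that iterated $R_k$-duality commutes with $\mathrm{Hom}_R(R_k,-)$ uniformly across the "seam" at $l=k-1$; the homological inputs are all supplied by Remarks~\ref{R1} and~\ref{R40}, Proposition~\ref{P01}, and the suitability of~(\ref{e40}), but assembling them requires care.
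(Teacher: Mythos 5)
Your proposal follows the same broad strategy as the paper's proof---push the chain from $R$ to $R_k=R\ltimes C_{k-1}^{\dag_{C_k}}$ using $\mathrm{Hom}_R(R_k,-)$, exploit Remark~\ref{R1} and \cite[Theorem~6.5]{c2} to transfer total reflexivity between $R$ and $R_k$, and use the identity composition $R\to R_k\to R$ to conclude distinctness---but two of your steps, as written, contain genuine gaps.

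First, the strictness argument. You propose to deduce $[C^{(k)}_a]\ne[C^{(k)}_b]$ by ``restricting scalars along $R\to R_k\to R$.'' Restriction of scalars of $\mathrm{Hom}_R(R_k,X)$ along $R\to R_k$ does not recover $X$: as an $R$-module $\mathrm{Hom}_R(R_k,X)\cong X\oplus\mathrm{Hom}_R(C_{k-1}^{\dag_{C_k}},X)$. The functor you need is $\mathrm{Hom}_{R_k}(R,-)$, with $R$ made an $R_k$-module via the surjection $R_k\to R$, together with Hom--tensor adjunction: $\mathrm{Hom}_{R_k}(R,\mathrm{Hom}_R(R_k,X))\cong\mathrm{Hom}_R(R\otimes_{R_k}R_k,X)\cong X$. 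This is what the paper does, and it is not ``restriction of scalars.''

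Second, and more seriously, your suitability verification rests on the unproved assertion that each iterated $R_k$-dual $R_k^{\dag^k_{C^{(k)}_{i_1}}\cdots\dag^k_{C^{(k)}_{i_j}}}$ coincides with $\mathrm{Hom}_R(R_k,X_{\mathbf{i}})$ for a \emph{single} semidualizing $R$-module $X_{\mathbf{i}}$ built from the chain~(\ref{e40}). This is not a priori true. What one actually gets, by iterating the adjunction $\mathrm{Hom}_{R_k}(M,\mathrm{Hom}_R(R_k,N))\cong\mathrm{Hom}_R(M,N)$ and the $R$-module splitting $\mathrm{Hom}_R(R_k,N)\cong N\oplus\mathrm{Hom}_R(C_{k-1}^{\dag_{C_k}},N)$, is that the iterated $R_k$-dual decomposes as a direct sum of \emph{two} modules from Lemma~\ref{L41}, namely $C_{\mathbf{i},\mathbf{u}}\oplus C_{\mathbf{i'},\mathbf{u}}$ with $\mathbf{i'}=\{k-1\}\cup\mathbf{i}$. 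To realize this as $\mathrm{Hom}_R(R_k,C_{\mathbf{i},\mathbf{u}})$ you would still need the nontrivial commutation $\mathrm{Hom}_R(C_{k-1}^{\dag_{C_k}},C_{\mathbf{i},\mathbf{u}})\cong C_{\mathbf{i'},\mathbf{u}}$, i.e.\ that the inner dual $\dag_{C'_{k-1}}$ can be pulled out to the front past all the others; your sketch does not supply this, and it is precisely the ``seam'' you flag as the anticipated obstacle. The paper avoids the issue entirely: it records the direct-sum identification, checks $[C_{t+1}]\trianglelefteq[C_{\mathbf{i},\mathbf{u}}]$ and $[C_{t+1}]\trianglelefteq[C_{\mathbf{i'},\mathbf{u}}]$ separately over $R$ using Lemma~\ref{L41}, and then applies \cite[Theorem~6.5]{c2} to each summand to obtain $[C^{(k)}_t]\trianglelefteq[C^{(k)}_{\mathbf{p},\mathbf{q}}]$ over $R_k$. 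You should adopt that two-summand bookkeeping rather than the single-module identification, or else prove the missing isomorphism.
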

\begin{proof}
Let $k\in[n]$.
For integers $a, b$ with $a\neq b$ and  $0\leqslant a, b\leqslant n-1$,
we observe that $[C_a^{(k)}]\neq[C_b^{(k)}]$. Indeed, we consider the three cases
$0\leqslant a, b < k-1$, $0\leqslant a<k-1 \leqslant b \leqslant n-1$, and
$k-1 \leqslant a, b \leqslant n-1$. We only discuss about the first case. The other cases are treated in a similar way.  For $0\leqslant a, b < k-1$,
if $[C_a^{(k)}]=[C_b^{(k)}]$, then
$\hspace{0.2cm}\mathrm{Hom}_R(R_k, C_{k-1-a}^{\dag_{C_k}})\cong \mathrm{Hom}_R(R_k, C_{k-1-b}^{\dag_{C_k}})\hspace{0.1cm}$ and so
$\mathrm{Hom}_{R_k}(R, \mathrm{Hom}_R(R_k, C_{k-1-a}^{\dag_{C_k}}))\cong \mathrm{Hom}_{R_k}(R, \mathrm{Hom}_R(R_k, C_{k-1-b}^{\dag_{C_k}}) )$.
Thus, by adjointness, $C_{k-1-a}^{\dag_{C_k}} \cong C_{k-1-b}^{\dag_{C_k}}$ ,
which contradicts with (\ref{e40}) in Lemma~\ref{L41}.

In order to proceed with the proof, for an $R_k$-module $M$, we invent the symbol
$(-)^{\dag^k_M}=\mathrm{Hom}_{R_k}(-,  M)$.
Note that, for $R_k$-modules $M_1, \cdots, M_t$, we have
$$(-)^{\dag^k_{M_1} \dag^k_{M_2} \cdots \dag^k_{M_t}}=
\bigg(\Big(\big( (-)^{\dag^k_{M_1}}\big)^{\dag^k_{M_2}}\Big)^{\cdots}\bigg)^{\dag^k_{M_t}}=
\mathrm{Hom}_{R_k}\big((-)^{\dag^k_{M_1} \dag^k_{M_2} \cdots \dag^k_{M_{t-1}}}, M_t\big).$$

For two sequences of integers $\mathbf{p} = \{p_1, \cdots , p_r\}$ and $\mathbf{q} = \{q_1, \cdots , q_s\}$ such that $r, s\geqslant 0$ and
$0< p_1 <  \cdots  < p_r < k-1\leqslant q_1< \cdots  < q_s\leqslant n-1$, set
$$C^{(k)}_{\mathbf{p}, \mathbf{q}}=
R_k^{\dag^k_{C_{p_1}^{(k)}}\cdots\dag^k_{C_{p_r}^{(k)}}\dag^k_{C_{q_1}^{(k)}}\cdots
\dag^k_{C_{q_s}^{(k)}}}.$$
Therefore one gets the following $R$-module isomorphisms
$$\begin{array}{llll}
C^{(k)}_{\mathbf{p}, \mathbf{q}}& = &
\mathrm{Hom}_{R_k}(\cdots \mathrm{Hom}_{R_k}(\mathrm{Hom}_{R_k}(
\cdots \mathrm{Hom}_{R_k}(R_k, C_{p_1}^{(k)})\cdots , C_{p_r}^{(k)}),
C_{q_1}^{(k)}) \cdots , C_{q_s}^{(k)})\\
 & \cong  &
\mathrm{Hom}_{R}(\cdots \mathrm{Hom}_{R}(\mathrm{Hom}_{R}(
\cdots \mathrm{Hom}_{R}(R_k, C_{k-1-p_1}^{\dag_{C_k}}) \cdots , C_{k-1-p_r}^{\dag_{C_k}}), C_{q_1+1}) \cdots , C_{q_s+1})\\
& \cong &
R^{\dag_{C'_{k-1-p_1}}\cdots\dag_{C'_{k-1-p_r}}\dag_{C_{q_1+1}}\cdots\dag_{C_{q_s+1}}}
\oplus
R^{\dag_{C'_{k-1}}\dag_{C'_{k-1-p_1}}\cdots\dag_{C'_{k-1-p_r}}\dag_{C_{q_1+1}}\cdots\dag_{C_{q_s+1}}}\\
& = & C_{\mathbf{i}, \mathbf{u}}\oplus C_{\mathbf{i'}, \mathbf{u}}\, ,
\end{array}$$
where  $\mathbf{i} = \{k-1-p_1, \cdots , k-1-p_r\}$, $\mathbf{i'} = \{k-1, k-1-p_1, \cdots , k-1-p_r\}$,
 $\mathbf{u} = \{q_1+1, \cdots , q_s+1\}$, $C'_l=C_l^{\dag_{C_k}}$, for all $0< l< k$,
 and $C_{\mathbf{i}, \mathbf{u}}$ and  $C_{\mathbf{i'}, \mathbf{u}}$ are as in the
proof of Lemma~\ref{L41}.

As $[C_{t+1}]\trianglelefteq [C_{\mathbf{i}, \mathbf{u}}]$ and
$[C_{t+1}]\trianglelefteq [C_{\mathbf{i'}, \mathbf{u}}]$ in $\mathfrak{G}_0(R)$
for all $t$, $q_s\leqslant t\leqslant n-1$, one gets
$[C^{(k)}_t]\trianglelefteq [C^{(k)}_{\mathbf{p}, \mathbf{q}}]$ in
$\mathfrak{G}_0(R_k)$, by \cite[Theorem 6.5]{c2}.
When $s=0$ we have
$C^{(k)}_{\mathbf{p}, \mathbf{q}}=C^{(k)}_{\mathbf{p}, \emptyset}\cong C_{\mathbf{i}, \emptyset}\oplus C_{\mathbf{i'}, \emptyset}$.
By Lemma~\ref{L41}, for all $m$, $p_r\leqslant m<k-1$, one has
$[C_{k-1-m}^{\dag_{C_k}}]\trianglelefteq [C_{\mathbf{i}, \emptyset}]$ and
$[C_{k-1-m}^{\dag_{C_k}}]\trianglelefteq [C_{\mathbf{i'}, \emptyset}]$ in $\mathfrak{G}_0(R)$.
Thus, by \cite[Theorem 6.5]{c2}, one gets
$[C^{(k)}_m]\trianglelefteq [C^{(k)}_{\mathbf{p}, \emptyset}]$ in $\mathfrak{G}_0(R_k)$.
Hence $[C_{n-1}^{(k)}] \vartriangleleft  \cdots  \vartriangleleft [C_1^{(k)}] \vartriangleleft  [R_k]$
is a suitable chain in $\mathfrak{G}_0(R_k)$ of length $n-1$.
\end{proof}

To state our main result, we recall the definitions of Tate homology and Tate cohomology ( see \cite{am} and \cite{jls-w} for more details ).
\begin{defn}\label{Tate1}
Let $M$ be a finite $R$-module. A \emph{Tate resolution} of $M$ is a diagram
$\mathbf{T}\stackrel{\vartheta}{\longrightarrow}\mathbf{P}\stackrel{\pi}{\longrightarrow}M$, where $\pi$ is an $R$-projective resolution of $M$,
$\mathbf{T}$ is an exact complex of projectives such that $\mathrm{Hom}_R(T, R)$ is exact, $\vartheta$ is a morphism, and $\vartheta_i$ is isomorphism for all $i\gg 0$.

By \cite[Theorem 3.1]{am}, a finite $R$-module has finite $\mbox{G}$-dimension if and only if it admits a Tate resolution.
\end{defn}

\begin{defn}\label{Tate2}
Let $M$ be a finite $R$-module of finite $\mbox{G}$-dimension, and let
$\mathbf{T}\stackrel{\vartheta}{\longrightarrow}\mathbf{P}\stackrel{\pi}{\longrightarrow}M$ be a Tate resolution of $M$.
For each integer $i$ and each $R$-module $N$, the $i$th \emph{Tate homology} and \emph{Tate cohomology} modules are
$$\widehat{\mathrm{Tor}}^R_i(M, N)= \mathrm{H}_i(\mathbf{T}\otimes_R N) \hspace{1.7cm} \widehat{\mathrm{Ext}}_R^i(M, N)=\mathrm{H}_{-i}(\mathrm{Hom}_R(\mathbf{T}, N)).$$
\end{defn}

\begin{thm}\label{T42}
Let $R$ be a Cohen-Macaulay ring with a dualizing module $D$.
Assume that $R$ admits a suitable chain
$[C_n]\vartriangleleft \cdots \vartriangleleft[C_1] \vartriangleleft [R]$  in $\mathfrak{G}_0(R)$
and  that $C_n\cong D$. Then there exist a Gorenstein local ring $Q$ and
ideals $I_1, \cdots, I_n$ of $Q$, which satisfy the following conditions.
In this situation, for each $\Lambda\subseteq [n]$, set $R_{_\Lambda}=Q/(\Sigma_{l\in \Lambda} I_l)$, in particular $R_{_\emptyset}=Q$.
\begin{itemize}
\item[(1)] There is a ring isomorphism $R\cong Q/{(I_1+\cdots+I_n)}$.
\item[(2)]  For each $\Lambda\subseteq [n]$ with $\Lambda\neq \emptyset$, the ring $R_{_\Lambda}$ is non-Gorenstein Cohen-Macaulay with a dualizing module.
\item[(3)]  For each $\Lambda\subseteq [n]$ with $\Lambda\neq \emptyset$,  we have $\bigcap_{l\in \Lambda}I_l=\prod_{l\in \Lambda}I_l$.
\item[(4)]  For subsets $\Lambda$, $\Gamma$ of $[n]$ with $\Gamma\subsetneq\Lambda$, we have $\mathrm{G}$-$\dim_{R_{_\Gamma}} R_{_{\Lambda}}=0$, and
$\mathrm{Hom}_{R_{_{\Gamma}}} (R_{_{\Lambda}}, R_{_{\Gamma}})$ is a non-free semidualizing $R_{_{\Lambda}}$-module.
\item[(5)] For subsets $\Lambda$, $\Gamma$ of $[n]$ with $\Lambda\neq \Gamma$, the module
$\mathrm{Hom}_{R_{_{\Lambda\cap \Gamma}}} (R_{_\Lambda}, R_{_{\Gamma}})$
is not cyclic  and
$$\mathrm{Ext}^{\geqslant1}_{R_{_{\Lambda\cap \Gamma}}} (R_{_\Lambda}, R_{_{\Gamma}})=0=\mathrm{Tor}^{R_{_{\Lambda\cap \Gamma}}}_{\geqslant1}(R_{_\Lambda}, R_{_{\Gamma}}).$$
\item[(6)] For subsets $\Lambda$, $\Gamma$ of $[n]$ with $|\Lambda\setminus \Gamma|=1$, we have
$$\mathrm{\widehat{Ext}}^i_{R_{_{\Lambda\cap \Gamma}}} (R_{_\Lambda}, R_{_{\Gamma}})=0=\mathrm{\widehat{Tor}}^{R_{_{\Lambda\cap \Gamma}}}_i(R_{_\Lambda}, R_{_{\Gamma}})$$
for all $i \in \mathbb{Z}$.
\end{itemize}
\end{thm}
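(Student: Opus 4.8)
The natural plan is to induct on the length $n$. The base case is $n=0$: then the chain is just $[R]=[C_0]$ with $C_0\cong D$, so $R$ is Gorenstein, one takes $Q=R$ with no ideals, and (1)--(6) hold vacuously. The instance $n=1$ recovers the Foxby--Reiten construction — one obtains $Q\cong R\ltimes D$, which is Gorenstein since $D$ is dualizing (cf.\ \cite{reiten} and \cite[Section~2]{jls-w}), and $I_1$ a square-zero ideal with $Q/I_1\cong R$, which is non-Gorenstein because $[D]\neq[R]$.

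For the inductive step, assume the result for chains of length $n-1$ and let $[C_n]\vartriangleleft\cdots\vartriangleleft[C_1]\vartriangleleft[R]$ be a suitable chain of length $n$ with $C_n\cong D$. First I would pass to the trivial extension $R_1:=R\ltimes C_1$ — the case $k=1$ of Remark~\ref{R42}, where $C_0^{\dag_{C_1}}=C_1$. By Fact~\ref{f5} it is Cohen--Macaulay local, and by Proposition~\ref{P41} it carries a suitable chain $[C_{n-1}^{(1)}]\vartriangleleft\cdots\vartriangleleft[C_1^{(1)}]\vartriangleleft[R_1]$ in $\mathfrak{G}_0(R_1)$ of length $n-1$, whose top term is $C_{n-1}^{(1)}=\mathrm{Hom}_R(R_1,C_n)=\mathrm{Hom}_R(R_1,D)$. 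Since $R\to R_1$ is module-finite and $R_1$ is maximal Cohen--Macaulay over $R$, Remark~\ref{R1} shows $\mathrm{Hom}_R(R_1,D)$ is a semidualizing $R_1$-module, and it has finite injective dimension over $R_1$; hence it is a dualizing module for $R_1$. Thus the inductive hypothesis applies to $R_1$ and yields a Gorenstein local ring $Q$ together with ideals $I_2,\dots,I_n$ of $Q$ such that $R_1\cong Q/(I_2+\cdots+I_n)$ and (1)--(6) hold for all subsets of $\{2,\dots,n\}$.

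It remains to adjoin one ideal. The subring $R\hookrightarrow R_1$ presents $R$ as $R_1/(0\oplus C_1)$ with $(0\oplus C_1)^2=0$; I would let $I_1$ be the ideal of $Q$ generated by lifts of a minimal generating set of the $R_1$-ideal $0\oplus C_1$, so that $I_1+(I_2+\cdots+I_n)$ is the preimage in $Q$ of $0\oplus C_1$, whence $R\cong Q/(I_1+\cdots+I_n)$; this is (1). For (2)--(6) I would argue by cases on how the distinguished index $1$ meets the subsets $\Lambda,\Gamma\subseteq[n]$ in play. When $1$ lies in none of the relevant subsets, the statement only involves rings $R_\Lambda$ with $\Lambda\subseteq\{2,\dots,n\}$, which are precisely those furnished by the inductive hypothesis for $R_1$, so there is nothing to prove. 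When $1$ does occur, the crux is to identify $R_\Lambda$ explicitly as a trivial extension of another member of the family $\{R_\Gamma\}$ by a non-free semidualizing module; this uses the tensor and reflexivity relations among the $C_i$'s (Theorem~\ref{T01}, Corollary~\ref{C01}, Proposition~\ref{P01}), in particular identities such as $C_1\otimes_R\mathrm{Hom}_R(C_1,D)\cong D$, to compute the relevant quotients of $Q$. Granting these identifications, (2)--(4) follow from the behaviour of Cohen--Macaulayness, dualizing modules, $\mathrm{G}_C$-dimension and Hom under trivial extensions and module-finite maps (\cite[Section~2]{jls-w}, Remark~\ref{R1}, \cite[Theorems~6.1 and~6.5]{c2}, Proposition~\ref{P01}); (5) is the accompanying Ext/Tor vanishing; and (6) follows from (5) once the modules involved are known to have finite $\mathrm{G}$-dimension, so that the Tate functors of Definitions~\ref{Tate1} and~\ref{Tate2} are defined, via the long exact sequences of \cite{am} relating absolute and Tate (co)homology.

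The main obstacle will be the transversality assertion (3) together with these explicit identifications. The choice of $I_1$ must be economical: taking, say, the full preimage of $0\oplus C_1$ would force an inclusion $I_l\subseteq I_m$ and then, by Nakayama's lemma, collapse an ideal, contradicting both (3) and the non-Gorenstein conclusions; the minimal-generator lift (and the analogous choices built into the inductive construction of $Q$) is exactly what makes $\bigcap_{l\in\Lambda}I_l=\prod_{l\in\Lambda}I_l$ — equivalently, the vanishing of $\mathrm{Tor}^{R_{\Lambda\cap\Gamma}}_{\geqslant1}(R_\Lambda,R_\Gamma)$ in the relevant range — come out right. Realizing every $R_\Lambda$ as an iterated trivial extension built from $C_1,\dots,C_n$ over the smaller quotients, and verifying these transversality statements, is where the bulk of the technical work lies.
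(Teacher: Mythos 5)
Your high-level plan — build $Q$ from a length-$(n-1)$ suitable chain over a trivial extension of $R$ and iterate — is the same skeleton the paper uses. But there is a genuine gap in the execution, and it is not merely in "technical details": the inductive reduction you set up does not cover the cases that carry the real content of the theorem.

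By fixing the auxiliary index to $k=1$ and applying the inductive hypothesis only to $R_1=R\ltimes C_1$, you obtain information about $R_\Lambda=Q/(\sum_{l\in\Lambda}I_l)$ exclusively for $\Lambda\subseteq\{2,\dots,n\}$. Your case analysis then says "when $1$ does occur" one should identify $R_\Lambda$ as a trivial extension and argue from there — but this is precisely where the argument must be made, and it is not. The paper handles this by two devices you do not develop. First, its Construction~\ref{cons} shows that the iterated extension ring has an intrinsic description $Q\cong\bigoplus_{\mathbf{i}\subseteq[n]}B_{\mathbf{i}}$ (with explicit multiplication and $I_l=\bigoplus_{l\in\mathbf{i}}B_{\mathbf{i}}$) that is \emph{independent of the order} in which the trivial extensions are performed; this is what allows Lemmas~\ref{L43}--\ref{L46} to choose, for each pair $(\Lambda,\Gamma)$ with $\Lambda\cup\Gamma\subsetneq[n]$, a $k\notin\Lambda\cup\Gamma$ and then invoke the inductive hypothesis over $R_k$. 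With only $k=1$ available you cannot reduce the cases $1\in\Lambda\cup\Gamma$. Second, the remaining "full" cases $\Lambda\cup\Gamma=[n]$ cannot be reduced to any smaller quotient at all; the paper gives direct computations there, resting on the decomposition $R_\Lambda\cong\bigoplus_{\mathbf{i}\subseteq[n]\setminus\Lambda}B_{\mathbf{i}}$ of Lemma~\ref{L43} and on Proposition~\ref{P01}(4)(5). None of that appears in your sketch. Your description of $I_1$ as the ideal generated by lifts of a minimal generating set of $0\oplus C_1$ happens to coincide with the paper's $I_1$ once one unwinds the evaluation isomorphism $C_1\otimes_R\mathrm{Hom}_R(C_1,C_j)\cong C_j$, but you neither verify that nor derive from it any of the structure needed.

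Two specific assertions in your last paragraph are also off. The ideal identity (3) is \emph{not} equivalent to the Tor-vanishing of (5): the paper proves (3) by a direct element-wise computation inside $Q=\bigoplus B_{\mathbf{i}}$, and proves (5) separately via Lemmas~\ref{L45}--\ref{L46}. And (6) does \emph{not} follow from (5) together with \cite{am}: the comparison $\widehat{\mathrm{Ext}}^i\cong\mathrm{Ext}^i$ and $\widehat{\mathrm{Tor}}_i\cong\mathrm{Tor}_i$ for $i\ge1$ and the duality $\widehat{\mathrm{Tor}}_i\cong\widehat{\mathrm{Ext}}^{-i}$ leave open the vanishing of $\widehat{\mathrm{Ext}}^0_{R_{\Lambda\cap\Gamma}}(R_\Lambda,R_\Gamma)$, which the paper settles with a separate surjectivity argument for the map $\nu$ in \cite[Lemma~5.8]{am}, using the explicit identification of $\mathrm{Hom}_{R_{\Lambda\cap\Gamma}}(R_\Lambda,R_{\Lambda\cap\Gamma})$ with an ideal quotient. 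Your sketch does not address the degree-zero term at all.
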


%=============================================
The ring $Q$ is constructed as an iterated trivial extension of $R$. As an $R$-module,
it has the form $Q=\oplus_{\mathbf{i}\subseteq [n]} B_{\mathbf{i}}$.
The details are contained in the following construction.
\begin{cons}\label{cons}
We construct the ring $Q$ by induction on $n$.
We claim that the ring $Q$, as an $R$-module, has the form
$Q=\oplus_{\mathbf{i}\subseteq [n]} B_{\mathbf{i}}$ and the ring structure on it is as follows.\\
For two elements $\big(\alpha_{\mathbf{i}}\big)_{\mathbf{i}\subseteq [n]}$ and $\big(\theta_{\mathbf{i}}\big)_{\mathbf{i}\subseteq [n]}$  of $Q$
$$\big(\alpha_{\mathbf{i}}\big)_{\mathbf{i}\subseteq [n]} \big(\theta_{\mathbf{i}}\big)_{\mathbf{i}\subseteq [n]}=\big(\sigma_{\mathbf{i}}\big)_{\mathbf{i}\subseteq [n]}\  , \  \text{where}\ \ \ \ \sigma_{\mathbf{i}}=\sum_{\scriptsize{\begin{array}{c}\mathbf{v}\subseteq \mathbf{i}\\
\mathbf{w}= \mathbf{i}\setminus \mathbf{v}  \end{array}} }
 \alpha_{\mathbf{v}}\cdot\theta_{\mathbf{w}}\, .$$

For $n=1$, set $Q=R\ltimes C_1$ and $I_1=0\oplus C_1$, which is the result of Foxby \cite{f} and
Reiten \cite{reiten}.
The case $n=2$ is proved by Jorgensen et.\!~al. \cite[Theorem 3.2]{jls-w}.
They proved that the extension ring $Q$ has the form
$Q=R\oplus C_1 \oplus C_1^{\dag_{C_2}}\oplus C_2$ as an $R$-module
(i.e.  $Q=B_\emptyset\oplus B_1\oplus B_2 \oplus B_{\{1,2\}}$).
Also the ring structure on $Q$ is  given by
$(r, c, f, d)(r', c', f', d')=(rr', rc' + r'c, rf'+r'f, f'(c)+f(c')+rd'+r'd)$.
The ideal $I_l$, $l=1,2$, has the form $I_l=0\oplus 0\oplus B_l \oplus B_{\{1,2\}}$.

Let  $n>2$.  Take an element $k\in [n]$. By Proposition~\ref{P41}, the ring
$R_k=R\ltimes C_{k-1}^{\dag_{C_k}}$
has the suitable chain
$[C_{n-1}^{(k)}] \vartriangleleft  \cdots  \vartriangleleft [C_1^{(k)}] \vartriangleleft  [R_k]$
in $\mathfrak{G}_0(R_k)$ of length $n-1$.
Note that $C_{n-1}^{(k)}=\mathrm{Hom}_R(R_k, C_n)\cong\mathrm{Hom}_R(R_k, D)$
is a dualizing $R_k$-module.

We  set $B_i^{(k)}=\mathrm{Hom}_{R_k}(C_{i-1}^{(k)}, C_{i}^{(k)})$, $i=1, \cdots, n-1$.
For two sequences $\mathbf{p}=\{p_1, \cdots, p_r\}$, $\mathbf{q}=\{q_1, \cdots, q_s\}$
such that $r, s\geqslant 1$ and
$1\leqslant p_1 <  \cdots  < p_r < k-1\leqslant q_1< \cdots  < q_s\leqslant n-1$, we set
\begin{equation}\label{e46}
B^{(k)}_{\mathbf{p}, \mathbf{q}}=B_{p_1}^{(k)}\otimes_{R_k}\cdots\otimes_{R_k}B_{p_r}^{(k)}\otimes_{R_k} B_{q_1}^{(k)}\otimes_{R_k}\cdots\otimes_{R_k}B_{q_s}^{(k)},
\end{equation}
$$B^{(k)}_{\mathbf{p}, \emptyset}=B_{p_1}^{(k)}\otimes_{R_k}\cdots\otimes_{R_k}B_{p_r}^{(k)},\ \
B^{(k)}_{\emptyset, \mathbf{q}}= B_{q_1}^{(k)}\otimes_{R_k}\cdots\otimes_{R_k}B_{q_s}^{(k)},\ \ \mathrm{and}\ \
B^{(k)}_{\emptyset, \emptyset}=C_0^{(k)}=R_k.$$
By applying the induction hypothesis on $R_k$ there is an extension ring, say $Q_k$, which is
Gorenstein local and, as an $R_k$-module, has the form
$$Q_k= \bigoplus_{\scriptsize{\begin{array}{c}\mathbf{p}\subseteq \{1, \cdots, k-2\}\\
\mathbf{q}\subseteq \{k-1, \cdots, n-1\}\end{array}}} B^{(k)}_{\mathbf{p}, \mathbf{q}}\ .$$
Moreover, the ring structure on $Q_k$ is as follows.\\
For
$\phi=\big(\phi_{\mathbf{p}, \mathbf{q}}\big)_{\tiny{
\mathbf{p}\subseteq \{1, \cdots, k-2\},\
\mathbf{q}\subseteq \{k-1, \cdots, n-1\}}}$
and
$\varphi=\big(\varphi_{\mathbf{p}, \mathbf{q}}\big)_{\tiny{
\mathbf{p}\subseteq \{1, \cdots, k-2\},\
\mathbf{q}\subseteq \{k-1, \cdots, n-1\}}}$
of $Q_k$

\begin{equation}\label{e45}
\phi\, \varphi=\psi=\big(\psi_{\mathbf{p}, \mathbf{q}}\big)_{\tiny{
\mathbf{p}\subseteq \{1, \cdots, k-2\},\
\mathbf{q}\subseteq \{k-1, \cdots, n-1\}}}\  , \ \text{where}  \ \ \ \
\psi_{\mathbf{p}, \mathbf{q}}=\sum_{\scriptsize{\begin{array}{c}
\mathbf{a} \subseteq \mathbf{p}, \mathbf{b}\subseteq\mathbf{q}\\
\mathbf{c}=\mathbf{p}\setminus \mathbf{a} \\
\mathbf{d}=\mathbf{q}\setminus \mathbf{b}
 \end{array}}}
 \phi_{\mathbf{a}, \mathbf{b}}\cdot\varphi_{\mathbf{c}, \mathbf{d}}\, .
\end{equation}

For each $\mathbf{p}, \mathbf{q}$, Proposition~\ref{P01}(2), Remark~\ref{R40} and (\ref{e46})
imply the following $R$-module isomorphism
\begin{equation}\label{e42}
B^{(k)}_{\mathbf{p}, \mathbf{q}}\cong \left\{ \begin{array}{l}
B_{\{k-p_r,\cdots, k-p_1, q_1+1, \cdots, q_s+1\}}\oplus B_{\{k-p_r, \cdots, k-p_1, k, q_1+1,\cdots, q_s+1\}}, \\
\mathrm{or}    \\
B_{\{1, k-p_r, \cdots, k-p_1, q_2+1,\cdots, q_s+1\}}\oplus B_{\{1, k-p_r, \cdots, k-p_1, k, q_2+1,\cdots, q_s+1\}}.
\end{array} \right.
\end{equation}
Therefore one gets an $R$-module isomorphism $Q_k\cong \oplus_{\mathbf{i}\subseteq [n]} B_{\mathbf{i}}$. Set $Q=Q_k$.

Assume that $\mathbf{p}, \mathbf{p}'\subseteq \{1, \cdots, k-2\}$ and $\mathbf{q}, \mathbf{q}'\subseteq \{k-1, \cdots, n-1\}$ such that
$\mathbf{p}\cap\mathbf{p}'=\emptyset$ and $\mathbf{q}\cap\mathbf{q}'=\emptyset$.
By Proposition~\ref{P01}(5) and Remark~\ref{R40}, the $R_k$-module
$B^{(k)}_{\mathbf{p}, \mathbf{q}}\otimes_{R_k}B^{(k)}_{\mathbf{p}', \mathbf{q}'}$ is a semidualizing and so
$B^{(k)}_{\mathbf{p}, \mathbf{q}}\otimes_{R_k}B^{(k)}_{\mathbf{p}', \mathbf{q}'}=
B^{(k)}_{\mathbf{p}\cup\mathbf{p}' , \mathbf{q}\cup \mathbf{q}'}$.
If $\phi_{\mathbf{p}, \mathbf{q}}\in B^{(k)}_{\mathbf{p}, \mathbf{q}}$ and
$\varphi_{\mathbf{p}', \mathbf{q}'}\in B^{(k)}_{\mathbf{p}', \mathbf{q}'}$, then
by  the isomorphism (\ref{e42}), one has
$\phi_{\mathbf{p}, \mathbf{q}}=(\beta_{\mathbf{p}, \mathbf{q}}, \gamma_{\mathbf{p}, \mathbf{q}})$ and
$\varphi_{\mathbf{p}', \mathbf{q}'}=(\beta_{\mathbf{p}', \mathbf{q}'}, \gamma_{\mathbf{p}', \mathbf{q}'})$, so that
$$\phi_{\mathbf{p}, \mathbf{q}} \cdot \varphi_{\mathbf{p}', \mathbf{q}'}=(\beta_{\mathbf{p}, \mathbf{q}}\cdot \beta_{\mathbf{p}', \mathbf{q}'},\  \beta_{\mathbf{p}, \mathbf{q}}\cdot \gamma_{\mathbf{p}', \mathbf{q}'} + \beta_{\mathbf{p}', \mathbf{q}'}\cdot\gamma_{\mathbf{p}, \mathbf{q}}) .$$
Thus by means of the ring structure on $Q_k$, (\ref{e45}), one can see that the resulting ring structure on $Q$ is as claimed.

The next step is to introduce the ideals $I_1, \cdots, I_n$. We set
$I_l = (\underbrace{0\oplus\cdots\oplus 0}_{2^{n-1}})\oplus (\oplus_{\mathbf{i}\subseteq [n],\, l\in\mathbf{i}}B_{\mathbf{i}})$, $1\leqslant l\leqslant n$,
which is an ideal of $Q$.
Also we have the following sequence of $R$-isomorphisms which preserve ring isomorphisms:
$$\begin{array}{llll}
Q/{(I_1+\cdots+I_n)}& = & (\oplus_{\mathbf{i}\subseteq [n]} B_{\mathbf{i}})/(\Sigma_{l=1}^n (\underbrace{0\oplus\cdots\oplus 0}_{2^{n-1}})\oplus (\oplus_{\mathbf{i}\subseteq [n],\, l\in\mathbf{i}}B_{\mathbf{i}}) )\\
&  \cong & (\oplus_{\mathbf{i}\subseteq [n]} B_{\mathbf{i}})/(\oplus_{\mathbf{i}\subseteq [n], \mathbf{i}\neq \emptyset} B_{\mathbf{i}})\\
                              &  \cong & R\, .
\end{array}$$

Note that each ideal
$I_{k,l}$, $1\leqslant l\leqslant n-1$, of $Q_k$ has the form
$I_{k,l}=(\underbrace{0\oplus\cdots\oplus 0}_{2^{n-2}})\oplus
(\oplus_{l\in\mathbf{p}\cup \mathbf{q}}B^{(k)}_{\mathbf{p}, \mathbf{q}})$.
Then, by (\ref{e42}), one has the following $R$-module isomorphism
$$I_{k,l}\cong \left\{ \begin{array}{ll}
I_{k-l} &   \text{if}\ \ 1\leqslant l\leqslant k-1\\
I_{l+1} &  \text{if}\ \ k\leqslant l\leqslant n-1.
\end{array} \right.$$
Also, by means of the ring isomorphism $Q_k\rightarrow Q$, we have the natural correspondence
between ideals:
$$I_{k,l}\stackrel{\text{correspond}}{\longleftarrow\hspace{-0.2cm}-\hspace{-0.2cm}-\hspace{-0.2cm}-\hspace{-0.2cm}-\hspace{-0.2cm}-
\hspace{-0.2cm}\longrightarrow} \left\{ \begin{array}{ll}
I_{k-l} &  \text{if}\ \ 1\leqslant l\leqslant k-1\\
I_{l+1} &  \text{if}\ \ k\leqslant l\leqslant n-1.
\end{array} \right.$$
Therefore for each $\Lambda\subseteq [n]\setminus\{k\}$,
there is a ring isomorphism
$Q/(\Sigma_{l\in \Lambda} I_l) \cong  Q_k/(\Sigma_{l\in\Lambda'} I_{k,l})$,
for some $\Lambda'\subseteq[n-1]$.
\end{cons}
%===============================================
The proof of Theorem~\ref{T42}, which is inspired by the proof of \cite[Theorem 3.2]{jls-w},
is rather technical and needs some preparatory lemmas.
%===================================================
\begin{lem}\label{L43}
Assume that $\Lambda\subseteq [n]$. Under the hypothesis of Theorem \ref{T42}, if $[n]\setminus \Lambda=\{b_1, \cdots, b_t\}$ with $1\leqslant b_1<\cdots<b_t\leqslant n$,
then there is an $R$-isomorphism
$$R_{_\Lambda}\cong \oplus_{\mathbf{i}\subseteq \{b_1, \cdots, b_t\}} B_{\mathbf{i}}$$
which induces a ring structure on $R_{_\Lambda}$ as follows.

For elements  $\big(\alpha_{\mathbf{i}}\big)_{\mathbf{i}\subseteq \{b_1, \cdots, b_t\}}$ and $\big(\theta_{\mathbf{i}}\big)_{\mathbf{i}\subseteq \{b_1, \cdots, b_t\}}$
 of $R_{_\Lambda}$
 $$\big(\alpha_{\mathbf{i}}\big)_{\mathbf{i}\subseteq \{b_1, \cdots, b_t\}}
\big(\theta_{\mathbf{i}}\big)_{\mathbf{i}\subseteq \{b_1, \cdots, b_t\}}=
\big(\sigma_{\mathbf{i}}\big)_{\mathbf{i}\subseteq \{b_1, \cdots, b_t\}}\  , \  where\ \ \ \ \
\sigma_{\mathbf{i}}=\sum_{\scriptsize{\begin{array}{c}
\mathbf{v}\subseteq \mathbf{i}\\
 \mathbf{w}=\mathbf{i}\setminus\mathbf{v} \end{array}} }
 \alpha_{\mathbf{v}}\cdot\theta_{\mathbf{w}}\, .$$
\end{lem}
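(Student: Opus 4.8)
The plan is to read off both structures of $R_{_\Lambda}$ directly from Construction~\ref{cons}, where $Q=\bigoplus_{\mathbf{i}\subseteq[n]}B_{\mathbf{i}}$ as an $R$-module, with the convolution multiplication, and where $I_l$ is the $R$-submodule $\bigoplus_{\mathbf{i}\subseteq[n],\,l\in\mathbf{i}}B_{\mathbf{i}}$ for each $l\in[n]$. First I would observe that
$$\sum_{l\in\Lambda}I_l=\bigoplus_{\mathbf{i}\subseteq[n],\ \mathbf{i}\cap\Lambda\neq\emptyset}B_{\mathbf{i}},$$
which is an $R$-direct summand of $Q$ whose complementary summand is $\bigoplus_{\mathbf{i}\subseteq[n],\ \mathbf{i}\cap\Lambda=\emptyset}B_{\mathbf{i}}=\bigoplus_{\mathbf{i}\subseteq\{b_1,\dots,b_t\}}B_{\mathbf{i}}$, since $\mathbf{i}\cap\Lambda=\emptyset$ is equivalent to $\mathbf{i}\subseteq[n]\setminus\Lambda=\{b_1,\dots,b_t\}$. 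Hence the canonical surjection $Q\to R_{_\Lambda}$ restricts to an $R$-module isomorphism $\bigoplus_{\mathbf{i}\subseteq\{b_1,\dots,b_t\}}B_{\mathbf{i}}\xrightarrow{\ \cong\ }R_{_\Lambda}$, which is the identification asserted in the lemma.

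Next I would transport the ring structure along this isomorphism. Given $\big(\alpha_{\mathbf{i}}\big)_{\mathbf{i}\subseteq\{b_1,\dots,b_t\}}$ and $\big(\theta_{\mathbf{i}}\big)_{\mathbf{i}\subseteq\{b_1,\dots,b_t\}}$, I represent their residue classes in $R_{_\Lambda}$ by the elements of $Q$ whose $B_{\mathbf{i}}$-component equals $\alpha_{\mathbf{i}}$ (resp.\ $\theta_{\mathbf{i}}$) for $\mathbf{i}\subseteq\{b_1,\dots,b_t\}$ and is $0$ otherwise; this is legitimate precisely because the complementary summand maps isomorphically onto $R_{_\Lambda}$. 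Multiplying these two elements in $Q$ by the convolution rule of Construction~\ref{cons}, the $B_{\mathbf{i}}$-component of the product is $\sigma_{\mathbf{i}}=\sum_{\mathbf{v}\subseteq\mathbf{i},\ \mathbf{w}=\mathbf{i}\setminus\mathbf{v}}\alpha_{\mathbf{v}}\cdot\theta_{\mathbf{w}}$. Here a summand $\alpha_{\mathbf{v}}\cdot\theta_{\mathbf{w}}$ vanishes unless $\mathbf{v}\subseteq\{b_1,\dots,b_t\}$ and $\mathbf{w}\subseteq\{b_1,\dots,b_t\}$, and since $\mathbf{v}\cup\mathbf{w}=\mathbf{i}$ this forces $\mathbf{i}\subseteq\{b_1,\dots,b_t\}$. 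Thus the product of the two chosen representatives already lies in the complementary summand $\bigoplus_{\mathbf{i}\subseteq\{b_1,\dots,b_t\}}B_{\mathbf{i}}$, so projecting to $R_{_\Lambda}$ changes nothing, and the induced multiplication is exactly $\big(\sigma_{\mathbf{i}}\big)_{\mathbf{i}\subseteq\{b_1,\dots,b_t\}}$ with $\sigma_{\mathbf{i}}=\sum_{\mathbf{v}\subseteq\mathbf{i},\ \mathbf{w}=\mathbf{i}\setminus\mathbf{v}}\alpha_{\mathbf{v}}\cdot\theta_{\mathbf{w}}$, as claimed.

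There is essentially no serious obstacle here; the only point requiring care is the bookkeeping of which $B_{\mathbf{i}}$-summands survive and the choice of representatives, both handled above. (Alternatively, one could prove the statement by induction on $n$: when $\Lambda\neq[n]$ pick $k\in[n]\setminus\Lambda$, invoke the ring isomorphism $Q/(\Sigma_{l\in\Lambda}I_l)\cong Q_k/(\Sigma_{l\in\Lambda'}I_{k,l})$ from the end of Construction~\ref{cons} together with the $R$-module isomorphisms of type $(\ref{e42})$, and treat the base case $\Lambda=[n]$, where $R_{_\Lambda}=R=C_0=B_\emptyset$, directly; but the computation above is shorter and avoids the re-indexing of the inductive step.)
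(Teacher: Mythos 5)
Your proof is correct, and it is genuinely more direct than the paper's. The paper proves Lemma~\ref{L43} by induction on $n$: for $\Lambda\subsetneq[n]$ it chooses $k\in[n]\setminus\Lambda$, passes to the isomorphism $R_{_\Lambda}\cong Q_k/(\Sigma_{l\in\Lambda'}I_{k,l})$ from the end of Construction~\ref{cons}, applies the induction hypothesis over $R_k$, and then unwinds through the $R$-module isomorphisms of the type in~(\ref{e42}) — essentially replaying the inductive bookkeeping of Construction~\ref{cons} once more. You instead take Construction~\ref{cons} as having already delivered the explicit final description of $Q$ and of the ideals $I_l$, observe that $\Sigma_{l\in\Lambda}I_l=\bigoplus_{\mathbf{i}\cap\Lambda\neq\emptyset}B_{\mathbf{i}}$ is literally an $R$-module direct summand of $Q=\bigoplus_{\mathbf{i}\subseteq[n]}B_{\mathbf{i}}$, identify $R_{_\Lambda}$ with the complementary summand $\bigoplus_{\mathbf{i}\subseteq\{b_1,\dots,b_t\}}B_{\mathbf{i}}$, and verify that the convolution product of two representatives supported on this summand stays in the summand, so the quotient multiplication is convolution on the subsets of $\{b_1,\dots,b_t\}$. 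This is a clean, self-contained computation and avoids both the re-indexing $\Lambda\leftrightarrow\Lambda'$ and the appeal to the $n=2$ case of \cite{jls-w}. The trade-off is that your argument leans fully on the explicit convolution description of $Q$ and the explicit form of $I_l$ asserted in Construction~\ref{cons}; the paper's inductive proof of the lemma stays structurally parallel to the inductive proof of the construction itself, which is perhaps why the authors chose it, but your route is the one a reader would naturally prefer once Construction~\ref{cons} is accepted.
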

\begin{proof}
We prove by induction on $n$. The case $n=1$ is clear. The case $n=2$ is proved in \cite{jls-w}.
Assume that $n>2$ and the assertion holds true for $n-1$.

If $\Lambda=[n]$, there is nothing to prove. Suppose that $|\Lambda|\leqslant n-1$
then there exists $k\in [n]$ such that $\Lambda\subseteq [n]\setminus\{k\}$. Thus, by Construction~\ref{cons},  there exists a subset $\Lambda'$ of $[n-1]$ such that
$R_{_\Lambda}\cong Q_k/(\Sigma_{l\in \Lambda'} I_{k,l})$  as ring isomorphism.

Note that $|[n-1]\setminus \Lambda'|=t-1$. Set $[n-1]\setminus \Lambda'=\{d_1, \cdots, d_u, d_{u+1}, \cdots, d_{t-1}\}$ such that
$1\leqslant d_1<\cdots<d_u<k-1$ and $k-1\leqslant d_{u+1}<\cdots< d_{t-1}\leqslant n-1$.
Then by induction there exists an $R_k$-isomorphism
 $$Q_k/(\Sigma_{l\in \Lambda'} I_{k,l})\cong \bigoplus_{\scriptsize{\begin{array}{c}
\mathbf{p}\subseteq \{d_1, \cdots, d_u\}\\
\mathbf{q}\subseteq \{d_{u+1}, \cdots, d_{t-1}\} \end{array}}}
B_{\mathbf{p}, \mathbf{q}}^{(k)}.$$
Proceeding as Construction~\ref{cons}, there is an $R$-isomorphism
$$\Big(\bigoplus_{\scriptsize{\begin{array}{c}
\mathbf{p}\subseteq \{d_1, \cdots, d_u\}\\
\mathbf{q}\subseteq \{d_{u+1}, \cdots, d_{t-1}\} \end{array}}}
B_{\mathbf{p}, \mathbf{q}}^{(k)}\Big)\
 \cong \  \Big(\bigoplus_{\mathbf{i}\subseteq \{b_1, \cdots, b_t\}} B_{\mathbf{i}}\Big).$$
 Therefore one has an $R$-isomorphism
$R_{_\Lambda}\cong \oplus_{\mathbf{i}\subseteq \{b_1, \cdots, b_t\}} B_{\mathbf{i}}$.
Similar to Construction~\ref{cons},  $R_{_\Lambda}$ has the desired ring structure.
\end{proof}

\begin{lem}\label{L44}
Under the hypothesis of Theorem~\ref{T42}, if $\Gamma\subsetneq \Lambda\subseteq [n]$, then
$\mathrm{Ext}^{\geqslant1}_{R_{_{\Gamma}}} (R_{_{\Lambda}}, R_{_{\Gamma}})=0$ and
$\mathrm{Hom}_{R_{_{\Gamma}}} (R_{_{\Lambda}}, R_{_{\Gamma}})$ is a non-free semidualizing $R_{_{\Lambda}}$-module.
\end{lem}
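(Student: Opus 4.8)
The plan is an induction on $n$, run in parallel with Lemma~\ref{L43} and Construction~\ref{cons}. The cases $n=1,2$ are the results of Foxby \cite{f}, Reiten \cite{reiten} and of Jorgensen et al. \cite{jls-w}; the basic fact underlying them, and driving the inductive step below, is that for a semidualizing $S$-module $B$ the ring $S$ is totally reflexive over the trivial extension $S\ltimes B$, with $\mathrm{Hom}_{S\ltimes B}(S,S\ltimes B)\cong B$ and $\mathrm{Ext}^{\geqslant1}_{S\ltimes B}(S,S\ltimes B)=0$ (see \cite[Section 2]{jls-w}); together with Remark~\ref{R1} this also says $\mathrm{Hom}_{S\ltimes B}(S,S\ltimes B)$ is semidualizing over $S$, non-free when $B$ is. So let $n>2$ and assume the lemma for suitable chains of length $<n$ (with dualizing module at the top).

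If $\Lambda\neq[n]$, pick $k\in[n]\setminus\Lambda$; then $\Gamma,\Lambda\subseteq[n]\setminus\{k\}$. By Proposition~\ref{P41}, $R_k=R\ltimes C_{k-1}^{\dag_{C_k}}$ carries a suitable chain of length $n-1$ whose top module $C_{n-1}^{(k)}\cong\mathrm{Hom}_R(R_k,D)$ is dualizing for $R_k$; and by Construction~\ref{cons} there are ring isomorphisms $R_{_\Lambda}\cong Q_k/(\Sigma_{l\in\Lambda'}I_{k,l})$ and $R_{_\Gamma}\cong Q_k/(\Sigma_{l\in\Gamma'}I_{k,l})$, where $\Lambda',\Gamma'\subseteq[n-1]$ correspond to $\Lambda,\Gamma$ under the power-set bijection induced by the ideal correspondence of Construction~\ref{cons}; in particular $\Gamma'\subsetneq\Lambda'$. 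The induction hypothesis for the pair $\Gamma'\subsetneq\Lambda'$ over $R_k$, transported along these ring isomorphisms, settles this case. So assume $\Lambda=[n]$, hence $R_{_\Lambda}=R$, and pick $k\in[n]\setminus\Gamma$. If $\Gamma=[n]\setminus\{k\}$, then $R_{_\Gamma}\cong R\ltimes C_{k-1}^{\dag_{C_k}}$ by Lemma~\ref{L43} (with $C_{k-1}^{\dag_{C_k}}=B_{\{k\}}$ non-free by Theorem~\ref{T03}), and the assertion is exactly the basic fact with $S=R$, $B=C_{k-1}^{\dag_{C_k}}$. Otherwise $\Gamma\subsetneq[n]\setminus\{k\}$, so $\Gamma'\subsetneq[n-1]$ and the canonical surjection $R_{_\Gamma}\twoheadrightarrow R$ factors as $R_{_\Gamma}\twoheadrightarrow R_k\twoheadrightarrow R$, the second map being $R_k=R\ltimes C_{k-1}^{\dag_{C_k}}\twoheadrightarrow R$. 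The induction hypothesis for $\Gamma'\subsetneq[n-1]$ over $R_k$ (using $Q_k/(\Sigma_{l\in[n-1]}I_{k,l})\cong R_k$ from Construction~\ref{cons}) gives $\mathrm{Ext}^{\geqslant1}_{R_{_\Gamma}}(R_k,R_{_\Gamma})=0$ and that $E:=\mathrm{Hom}_{R_{_\Gamma}}(R_k,R_{_\Gamma})$ is a non-free semidualizing $R_k$-module. Since the higher Ext vanish, the change-of-rings spectral sequence for $R_{_\Gamma}\twoheadrightarrow R_k$ degenerates to $\mathrm{Ext}^i_{R_{_\Gamma}}(R,R_{_\Gamma})\cong\mathrm{Ext}^i_{R_k}(R,E)$ for all $i$, so it suffices to prove that $R$ has finite $\mathrm{G}_E$-dimension over $R_k$; then Remark~\ref{R1}, together with $\mathrm{Hom}_{R_k}(R,E)\neq0$ (because $0\oplus C_{k-1}^{\dag_{C_k}}$ is a nilpotent ideal of $R_k$), forces $\mathrm{G}_E$-$\dim_{R_k}(R)=0$, whence $\mathrm{Ext}^{\geqslant1}_{R_k}(R,E)=0$ and $\mathrm{Hom}_{R_k}(R,E)$ is a semidualizing $R$-module, which is the claim for $R_{_\Gamma}\twoheadrightarrow R$. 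The finiteness of $\mathrm{G}_E$-$\dim_{R_k}(R)$ over the trivial extension $R_k=R\ltimes C_{k-1}^{\dag_{C_k}}$ is obtained from \cite[Theorem 6.5]{c2}, used exactly as in the proof of Proposition~\ref{P41}, together with the $R$-module description of $E$ coming from (\ref{e42}).

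The non-freeness of $\mathrm{Hom}_{R_{_\Gamma}}(R_{_\Lambda},R_{_\Gamma})$ as an $R_{_\Lambda}$-module I would establish directly from Lemma~\ref{L43}, uniformly in all cases. Writing $R_{_\Gamma}\cong\oplus_{\mathbf{j}\subseteq[n]\setminus\Gamma}B_{\mathbf{j}}$ with the ring structure of Lemma~\ref{L43}, the kernel of $R_{_\Gamma}\twoheadrightarrow R_{_\Lambda}$ is $J=\oplus_{\mathbf{j}\subseteq[n]\setminus\Gamma,\ \mathbf{j}\cap(\Lambda\setminus\Gamma)\neq\emptyset}B_{\mathbf{j}}$, generated by the summands $B_{\{a\}}$ with $a\in\Lambda\setminus\Gamma$, so $\mathrm{Hom}_{R_{_\Gamma}}(R_{_\Lambda},R_{_\Gamma})=(0:_{R_{_\Gamma}}J)$. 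Using that $B_{\mathbf{v}}\otimes_RB_{\mathbf{w}}\cong B_{\mathbf{v}\cup\mathbf{w}}$ for disjoint $\mathbf{v},\mathbf{w}$ (Proposition~\ref{P01}(5)) and that for $a\notin\mathbf{u}$ the natural map $B_{\mathbf{u}}\to\mathrm{Hom}_R(B_{\{a\}},B_{\mathbf{u}\cup\{a\}})$ is an isomorphism (this is $B_{\mathbf{u}}\in\mathcal{A}_{B_{\{a\}}}(R)$, Proposition~\ref{P01}(5)), a short computation gives $(0:_{R_{_\Gamma}}J)=\oplus_{\mathbf{j}\subseteq[n]\setminus\Gamma,\ \Lambda\setminus\Gamma\subseteq\mathbf{j}}B_{\mathbf{j}}\cong\oplus_{\mathbf{i}\subseteq[n]\setminus\Lambda}B_{(\Lambda\setminus\Gamma)\cup\mathbf{i}}$. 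Since $R=R_{_\Lambda}/(\oplus_{\emptyset\neq\mathbf{i}\subseteq[n]\setminus\Lambda}B_{\mathbf{i}})$, tensoring this $R_{_\Lambda}$-module with $R$ over $R_{_\Lambda}$ kills every summand but $B_{\Lambda\setminus\Gamma}$; and $\Lambda\setminus\Gamma\neq\emptyset$, so by Theorem~\ref{T03} the semidualizing module $B_{\Lambda\setminus\Gamma}$ is not isomorphic to $B_{\emptyset}=R$ and hence is not cyclic. Therefore $\mathrm{Hom}_{R_{_\Gamma}}(R_{_\Lambda},R_{_\Gamma})\otimes_{R_{_\Lambda}}R$ is not cyclic, so $\mathrm{Hom}_{R_{_\Gamma}}(R_{_\Lambda},R_{_\Gamma})$ is a non-cyclic, hence non-free, $R_{_\Lambda}$-module.

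The step I expect to be the main obstacle is the finiteness of $\mathrm{G}_E$-dimension in the case $\Lambda=[n]$: merely composing the ``$\mathrm{G}$-dimension $0$'' information along $R_{_\Gamma}\twoheadrightarrow R_k\twoheadrightarrow R$ is not enough, since one needs $R$ to be reflexive with respect to the particular semidualizing $R_k$-module $E$, not just over $R_k$ itself, and extracting this from \cite[Theorem 6.5]{c2} (and the explicit $R$-module shape of $E$ given by (\ref{e42})) is the delicate point. The remaining ingredients — the power-set bijection of Construction~\ref{cons}, the degeneration of the change-of-rings spectral sequence, and the annihilator computation — are routine.
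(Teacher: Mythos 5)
Your strategy coincides with the paper's for the case $\Lambda\neq[n]$ (pick $k\notin\Lambda$ and reduce over $R_k$ via Construction~\ref{cons} and induction). Where you diverge is precisely the case $\Lambda=[n]$, and that is where your argument has a real, unclosed gap — one you yourself flag. Factoring $R_{_\Gamma}\twoheadrightarrow R_k\twoheadrightarrow R$, you collapse the change-of-rings spectral sequence to $\mathrm{Ext}^i_{R_{_\Gamma}}(R,R_{_\Gamma})\cong\mathrm{Ext}^i_{R_k}(R,E)$ with $E=\mathrm{Hom}_{R_{_\Gamma}}(R_k,R_{_\Gamma})$, and then you need $\mathrm{G}_E$-$\dim_{R_k}(R)<\infty$ so that Remark~\ref{R1} applies. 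But Remark~\ref{R1} is precisely what would produce the unique degree $g$ with $\mathrm{Ext}^g_{R_k}(R,E)$ semidualizing, so you cannot use the output of Remark~\ref{R1} to establish its hypothesis: the Ext-vanishing you want and the $\mathrm{G}_E$-dimension finiteness you need are logically the same content, and invoking \cite[Theorem 6.5]{c2} ``as in Proposition~\ref{P41}'' does not obviously break the circle, since that application transfers reflexivity between \emph{semidualizing} modules, whereas here you must show the non-semidualizing $R_k$-module $R$ is totally $E$-reflexive (mere $\mathrm{G}$-$\dim_{R_k}(R)=0$ does not give $\mathrm{G}_E$-$\dim_{R_k}(R)=0$ for an arbitrary semidualizing $E$). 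One can try to close this by identifying $E\cong\mathrm{Hom}_R(R_k,B_{[n]\setminus\Gamma})$ as $R_k$-modules and applying \cite[Theorem 6.5]{c2} to $R\to R_k$ with $C=B_{[n]\setminus\Gamma}$, but none of that is carried out, and it is genuinely delicate.

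The paper avoids the issue entirely by doing the case $\Lambda=[n]$ directly, with no spectral sequence and no $\mathrm{G}_E$-dimension transfer. Set $\{a_1,\dots,a_s\}=[n]\setminus\Gamma$. By Lemma~\ref{L43}, $R_{_\Gamma}\cong\oplus_{\mathbf{i}\subseteq\{a_1,\dots,a_s\}}B_{\mathbf{i}}$, and Proposition~\ref{P01}(4) immediately gives $\mathrm{Hom}_R(R_{_\Gamma},B_{\{a_1,\dots,a_s\}})\cong R_{_\Gamma}$ and $\mathrm{Ext}^{\geqslant1}_R(R_{_\Gamma},B_{\{a_1,\dots,a_s\}})=0$. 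Thus, for $\mathbf{E}$ an $R$-injective resolution of $B_{\{a_1,\dots,a_s\}}$, the complex $\mathrm{Hom}_R(R_{_\Gamma},\mathbf{E})$ is an $R_{_\Gamma}$-injective resolution of $R_{_\Gamma}$; by Hom-tensor adjointness, $\mathrm{Hom}_{R_{_\Gamma}}(R,\mathrm{Hom}_R(R_{_\Gamma},\mathbf{E}))\cong\mathbf{E}$, which yields $\mathrm{Ext}^i_{R_{_\Gamma}}(R,R_{_\Gamma})=0$ for $i>0$ and $\mathrm{Hom}_{R_{_\Gamma}}(R,R_{_\Gamma})\cong B_{\{a_1,\dots,a_s\}}$ — a non-free semidualizing $R$-module, so the statement follows in one stroke. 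Your annihilator computation of $\mathrm{Hom}_{R_{_\Gamma}}(R_{_\Lambda},R_{_\Gamma})$ and the observation that it tensors down to $B_{\Lambda\setminus\Gamma}$ over $R$ is correct and is essentially the same identification the paper obtains; it cleanly handles non-freeness (granting the semidualizing property from the rest of the argument, since a free semidualizing module has rank one), but it does not supply the missing Ext-vanishing. I would recommend replacing the $\mathrm{G}_E$-dimension step with the paper's injective-resolution argument.
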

\begin{proof}
The case $n=1$ is clear and the case $n=2$ is proved in
\cite[Lemma 3.8]{jls-w}.
Let $n>2$ and suppose that the assertion is settled for $n-1$.

First assume that $\Lambda=[n]$. Set $[n]\setminus \Gamma=\{a_1, \cdots, a_s\}$ with $1\leqslant a_1<\cdots<a_s\leqslant n$.
By Lemma~\ref{L43}, $R_{_{\Gamma}}\cong \oplus_{\mathbf{i}\subseteq \{a_1, \cdots, a_s\}} B_{\mathbf{i}}$.
By Proposition~\ref{P01}(4) and Remark~\ref{R40},
$[B_{\{a_1, \cdots, a_s\}}]\trianglelefteq [B_{\mathbf{i}}]$ and
$\mathrm{Hom}_R(B_{\mathbf{i}}, B_{\{a_1, \cdots, a_s\}}) \cong B_{\{a_1, \cdots, a_s\}\setminus\mathbf{i}}$,
for all $\mathbf{i}\subseteq\{a_1, \cdots, a_s\}$.
Therefore there are $R$-isomorphisms
$$\mathrm{Hom}_R (R_{_{\Gamma}}, B_{\{a_1, \cdots, a_s\}})\cong \mathrm{Hom}_R (\oplus_{\mathbf{i}\subseteq \{a_1, \cdots, a_s\}} B_{\mathbf{i}}, B_{\{a_1, \cdots, a_s\}})\cong \oplus_{\mathbf{i}\subseteq \{a_1, \cdots, a_s\}} B_{\mathbf{i}}\cong R_{_{\Gamma}}$$
and, for all $i\geqslant 1$,
$$\mathrm{Ext}^i_R(R_{_{\Gamma}}, B_{\{a_1, \cdots, a_s\}})\cong \mathrm{Ext}^i_R(\oplus_{\mathbf{i}\subseteq \{a_1, \cdots, a_s\}} B_{\mathbf{i}}, B_{\{a_1, \cdots, a_s\}})=0.$$
Let $\mathbf{E}$ be an injective resolution of $B_{\{a_1, \cdots, a_s\}}$ as  an $R$-module. Thus $\mathrm{Hom}_R (R_{_{\Gamma}}, \mathbf{E})$ is an injective resolution of $R_{_{\Gamma}}$ as an $R_{_{\Gamma}}$-module. Note that the composition of natural homomorphisms \\
$R\rightarrow R_{_{\Gamma}}\rightarrow R$ is the identity $\mathrm{id}_R$.
Therefore
$$\mathrm{Hom}_{R_{_{\Gamma}}}(R, \mathrm{Hom}_R (R_{_{\Gamma}}, \mathbf{E}))\cong \mathrm{Hom}_R(R\otimes_{R_{_{\Gamma}}}R_{_{\Gamma}},\mathbf{E})\cong  \mathrm{Hom}_R(R, \mathbf{E})\cong \mathbf{E}\,.$$
Hence
$$\begin{array}{llll}
\mathrm{Ext}^i_{R_{_{\Gamma}}} (R, R_{_{\Gamma}})  &  \cong & \mathrm{H}^i(\mathrm{Hom}_{R_{_{\Gamma}}}(R, \mathrm{Hom}_R (R_{_{\Gamma}}, \mathbf{E}))) \\
                             &  \cong & \mathrm{H}^i(\mathbf{E}) \\
                              &  \cong & \left\{ \begin{array}{ll}
0 &\text{if}\ \ i>0\\
B_{\{a_1, \cdots, a_s\}} &\text{if}\ \ i=0\,.
\end{array} \right.
\end{array}$$
As $\{a_1, \cdots, a_s\}\neq \emptyset$, the $R$-module $B_{\{a_1, \cdots, a_s\}}$ is a non-free semidualizing.

Now assume that $|\Lambda|\leqslant n-1$. There exist $k\in [n]$, and subsets $\Gamma'$, $\Lambda'$ of $[n-1]$ such that there are $R$-isomorphisms and ring isomorphisms
$R_{_{\Gamma}}\cong Q_k/(\Sigma_{l\in \Gamma'} I_{k,l})$ and $R_{_{\Lambda}}\cong Q_k/(\Sigma_{l\in \Lambda'} I_{k,l})$,
where $Q_k$ and $I_{k,l}$ are as in Construction~\ref{cons}.
 By induction we have
$$\mathrm{Ext}^{i}_{R_{_{\Gamma}}} (R_{_{\Lambda}}, R_{_{\Gamma}})\cong \mathrm{Ext}^{i}_{Q_k/(\Sigma_{l\in \Gamma'} I_{k,l})} (Q_k/(\Sigma_{l\in \Lambda'} I_{k,l}), Q_k/(\Sigma_{l\in \Gamma'} I_{k,l}))=0$$
for all $i\geqslant 1$, and
$$\mathrm{Hom}_{R_{_{\Gamma}}} (R_{_{\Lambda}}, R_{_{\Gamma}})\cong \mathrm{Hom}_{Q_k/(\Sigma_{l\in \Gamma'} I_{k,l})} (Q_k/(\Sigma_{l\in \Lambda'} I_{k,l}), Q_k/(\Sigma_{l\in \Gamma'} I_{k,l}))$$
 is a non-free semidualizing $Q_k/(\Sigma_{l\in \Lambda'} I_{k,l})$-module.
Then $\mathrm{Hom}_{R_{_{\Gamma}}} (R_{_{\Lambda}}, R_{_{\Gamma}})$ is a non-free semidualizing $R_{_{\Lambda}}$-module.
\end{proof}

\begin{lem}\label{L45}
Under the hypothesis of Theorem \ref{T42}, if ${\Lambda}$ and ${\Gamma}$
are two subsets of $[n]$, then
$\mathrm{Tor}^{R_{_{{\Lambda}\cup {\Gamma}}}}_{\geqslant 1}(R_{_{\Lambda}}, R_{_{\Gamma}})=0$. Moreover, there is an $R_{_{\Lambda}}$-algebra isomorphism
$R_{_{\Lambda}}\otimes_{R_{_{{\Lambda}\cup {\Gamma}}}} R_{_{\Gamma}}\cong R_{_{{\Lambda}\cap {\Gamma}}}$.
\end{lem}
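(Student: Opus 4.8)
The plan is to mimic the structure of the earlier proofs (Lemmas~\ref{L43} and~\ref{L44}) and argue by induction on $n$, reducing to the two-variable case treated in \cite{jls-w} and to a smaller trivial-extension ring $R_k$. First I would dispose of the trivial cases: if $\Lambda\cup\Gamma=[n]$ we are intersecting against $R=R_{_{[n]}}$ itself and both claims are immediate; the cases $n=1$ and $n=2$ are handled in \cite{jls-w}. So assume $n>2$, $\Lambda\cup\Gamma\subsetneq[n]$, and the lemma for $n-1$. Then there is some $k\in[n]$ with $\Lambda\cup\Gamma\subseteq[n]\setminus\{k\}$, hence also $\Lambda,\Gamma\subseteq[n]\setminus\{k\}$, and Construction~\ref{cons} supplies subsets $\Lambda',\Gamma'\subseteq[n-1]$ together with ring isomorphisms $R_{_\Lambda}\cong Q_k/(\Sigma_{l\in\Lambda'}I_{k,l})$, $R_{_\Gamma}\cong Q_k/(\Sigma_{l\in\Gamma'}I_{k,l})$. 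The key bookkeeping point, which I would extract from the correspondence of ideals in Construction~\ref{cons}, is that the union and intersection of subsets of $[n]\setminus\{k\}$ correspond under $l\mapsto k-l$ (resp.\ $l\mapsto l+1$) to the union and intersection of the associated subsets of $[n-1]$; in particular $R_{_{\Lambda\cup\Gamma}}\cong Q_k/(\Sigma_{l\in\Lambda'\cup\Gamma'}I_{k,l})$ and $R_{_{\Lambda\cap\Gamma}}\cong Q_k/(\Sigma_{l\in\Lambda'\cap\Gamma'}I_{k,l})$, because this complementation is an order-reversing bijection and therefore swaps joins with meets consistently.

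Granting this, both assertions transfer verbatim from the induction hypothesis applied to $R_k$ (which carries a suitable chain of length $n-1$ with dualizing top term, by Proposition~\ref{P41} and the remark in Construction~\ref{cons} that $C_{n-1}^{(k)}\cong\mathrm{Hom}_R(R_k,D)$ is dualizing over $R_k$): one gets
$$\mathrm{Tor}^{R_{_{\Lambda\cup\Gamma}}}_{\geqslant1}(R_{_\Lambda},R_{_\Gamma})\cong\mathrm{Tor}^{Q_k/(\Sigma_{l\in\Lambda'\cup\Gamma'}I_{k,l})}_{\geqslant1}\big(Q_k/(\Sigma_{l\in\Lambda'}I_{k,l}),\,Q_k/(\Sigma_{l\in\Gamma'}I_{k,l})\big)=0$$
and similarly the tensor product computes $Q_k/(\Sigma_{l\in\Lambda'\cap\Gamma'}I_{k,l})\cong R_{_{\Lambda\cap\Gamma}}$ as an $R_{_\Lambda}$-algebra.

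For the base case $\Lambda\cup\Gamma=[n]$ itself (which also anchors the algebra isomorphism when no $k$ is available), I would give a direct argument: writing $A=R_{_{\Lambda\cup\Gamma}}=R$, one has ring surjections $R\twoheadrightarrow R_{_\Lambda}$, $R\twoheadrightarrow R_{_\Gamma}$ with kernels the ideals $\Sigma_{l\in\Lambda}I_l$ and $\Sigma_{l\in\Gamma}I_l$ inside $Q$, and using the explicit $R$-module decompositions from Lemma~\ref{L43} together with the facts in Proposition~\ref{P01}(3)--(4) (vanishing of Tor and Ext among the $B_{\mathbf i}$, and $B_{\mathbf i}\otimes_R B_{\mathbf j}\cong B_{\mathbf i\cup\mathbf j}$ when $\mathbf i\cap\mathbf j=\emptyset$) one computes the Tor directly from the decompositions and identifies $R_{_\Lambda}\otimes_R R_{_\Gamma}$ with $\oplus_{\mathbf i\subseteq([n]\setminus\Lambda)\cap([n]\setminus\Gamma)}B_{\mathbf i}=\oplus_{\mathbf i\subseteq[n]\setminus(\Lambda\cup\Gamma)}B_{\mathbf i}\cong R_{_{\Lambda\cap\Gamma}}$, matching the ring structures via Lemma~\ref{L43}. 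The main obstacle I anticipate is purely combinatorial rather than homological: keeping the index-translation between subsets of $[n]\setminus\{k\}$ and subsets of $[n-1]$ straight through both the ``$\leqslant k-1$'' and ``$\geqslant k$'' regimes, and verifying that it is compatible with $\cup$ and $\cap$ so that the induction hypothesis can be invoked with the correct rings on the nose; once that compatibility is recorded the homological content is a one-line appeal to the previous step.
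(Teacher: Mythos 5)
Your overall strategy (induction on $n$, direct argument when $\Lambda\cup\Gamma=[n]$, reduction to $Q_k$ via Construction~\ref{cons} otherwise) is the same as the paper's, and the reduction to $R_k$ is handled correctly. However there are two slips worth flagging in the anchor case $\Lambda\cup\Gamma=[n]$.

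First, your opening remark that for $\Lambda\cup\Gamma=[n]$ ``both claims are immediate'' is wrong; this case is where essentially all the work lies, and you in fact go on to sketch a genuine argument for it, so the remark is merely misleading rather than fatal. Second, and more substantively, the displayed decomposition contains an inversion. Writing $A=[n]\setminus\Lambda$ and $B=[n]\setminus\Gamma$, Lemma~\ref{L43} gives $R_\Lambda\cong\oplus_{\mathbf{i}\subseteq A}B_{\mathbf{i}}$ and $R_\Gamma\cong\oplus_{\mathbf{u}\subseteq B}B_{\mathbf{u}}$, and since $A\cap B=\emptyset$ (as $\Lambda\cup\Gamma=[n]$), Proposition~\ref{P01}(5) (the Auslander-class part, not only parts (3)--(4)) gives $\mathrm{Tor}^R_{\geqslant1}(B_{\mathbf{i}},B_{\mathbf{u}})=0$ and $B_{\mathbf{i}}\otimes_R B_{\mathbf{u}}\cong B_{\mathbf{i}\cup\mathbf{u}}$, so that
\[
R_\Lambda\otimes_R R_\Gamma\;\cong\;\bigoplus_{\substack{\mathbf{i}\subseteq A\\ \mathbf{u}\subseteq B}}B_{\mathbf{i}\cup\mathbf{u}}\;\cong\;\bigoplus_{\mathbf{v}\subseteq A\cup B}B_{\mathbf{v}}\;=\;\bigoplus_{\mathbf{v}\subseteq [n]\setminus(\Lambda\cap\Gamma)}B_{\mathbf{v}}\;\cong\; R_{\Lambda\cap\Gamma}.
\]
You instead wrote $\oplus_{\mathbf{i}\subseteq A\cap B}B_{\mathbf{i}}=\oplus_{\mathbf{i}\subseteq[n]\setminus(\Lambda\cup\Gamma)}B_{\mathbf{i}}$; but $A\cap B=\emptyset$ here, so that sum is just $B_\emptyset=R$, not $R_{\Lambda\cap\Gamma}$. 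The correct indexing set is the union of complements, not the intersection. Finally, a minor point: the index bijection $[n-1]\to[n]\setminus\{k\}$ in Construction~\ref{cons} is order-reversing on one block and order-preserving on the other, so it is not globally ``order-reversing''; what matters (and what you do use correctly) is only that it is a bijection, hence carries unions to unions and intersections to intersections.

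One more remark: the paper also verifies carefully that $\eta$ is not just a ring isomorphism but an $R_\Lambda$-algebra isomorphism (via the maps $\zeta$ and $\varepsilon$ and the identity $\eta\zeta=\varepsilon$). Your sketch compresses this to ``matching the ring structures via Lemma~\ref{L43},'' which is the right idea but would need to be spelled out to match the level of rigor of the paper.
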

\begin{proof}
We prove by induction. If $n=1$, there is nothing to prove. The case $n=2$ is proved in
\cite[Lemma 3.9]{jls-w}.
Let $n>2$ and suppose that the assertion holds true for $n-1$.
First assume that ${\Lambda}\cup {\Gamma}=[n]$ and set $[n]\setminus {\Lambda}=\{b_1, \cdots, b_t\}$, $[n]\setminus {\Gamma}=\{a_1, \cdots, a_s\}$.
Then $[n]\setminus ({\Lambda}\cap {\Gamma})=\{b_1, \cdots, b_t, a_1, \cdots, a_s\}$.
By Lemma~\ref{L43},
$R_{_{\Lambda}}\cong \oplus_{\mathbf{i}\subseteq \{b_1, \cdots, b_t\}} B_{\mathbf{i}}\, \ \mathrm{and}\  \ R_{_{\Gamma}}\cong \oplus_{\mathbf{u}\subseteq \{a_1, \cdots, a_s\}} B_{\mathbf{u}}$.

As $\{b_1, \cdots, b_t\}\cap \{a_1, \cdots, a_s\}=\emptyset$, for each
$\mathbf{i}\subseteq \{b_1, \cdots, b_t\}$ and $\mathbf{u}\subseteq \{a_1, \cdots, a_s\}$,
by Proposition~\ref{P01}(5) and Remark~\ref{R40}, one has
$B_{\mathbf{i}}\in \mathcal{A}_{B_{\mathbf{u}}}(R)$ and so
$\mathrm{Tor}^R_{\geqslant1}(B_{\mathbf{i}}, B_{\mathbf{u}})=0$.
Hence $\mathrm{Tor}^R_{\geqslant1}(R_{_{\Lambda}}, R_{_{\Gamma}})=0$.

By Proposition~\ref{P01}(5) and Remark~\ref{R40}, the $R$-module
$B_{\mathbf{i}}\otimes_R B_{\mathbf{u}}$ is semidualizing and so \\
$B_{\mathbf{i}}\otimes_R B_{\mathbf{u}}= B_{\mathbf{i}\cup\mathbf{u}}$.
Therefore one has the natural $R$-module isomorphism
$$\eta: R_{_{\Lambda}}\otimes_R R_{_{\Gamma}}\longrightarrow
R_{_{{\Lambda}\cap{\Gamma}}}\, ,  \ \ \ \
\eta\big( (\alpha_{\mathbf{i}})_{\mathbf{i}\subseteq \{b_1, \cdots, b_t\}}
\otimes(\theta_{\mathbf{u}})_{\mathbf{u}\subseteq \{a_1, \cdots, a_s\}}\big)
=\big(\alpha_{\mathbf{i}}.\theta_{\mathbf{u}}\big)_{\scriptsize{\begin{array}{l}
\mathbf{i}\subseteq \{b_1, \cdots, b_t\}\\
\mathbf{u}\subseteq \{a_1, \cdots, a_s\}
\end{array}}}$$
It is routine to check that $\eta$  is also a ring isomorphism.

On the other hand the natural maps
$$\zeta:R_{_{\Lambda}}\rightarrow R_{_{\Lambda}}\otimes_R R_{_{\Gamma}}\, ,  \ \ \ \
\zeta{\big(}(\alpha_{\mathbf{i}})_{\mathbf{i}\subseteq \{b_1, \cdots, b_t\}}{\big)}=(\alpha_{\mathbf{i}})_{\mathbf{i}\subseteq \{b_1, \cdots, b_t\}}
\otimes(\stackrel{\centerdot}{\theta}_{\mathbf{u}})_{\mathbf{u}\subseteq \{a_1, \cdots, a_s\}}$$
and
$$\varepsilon:R_{_{\Lambda}}\rightarrow R_{_{{\Lambda}\cap {\Gamma}}}\, ,  \ \ \ \
\varepsilon((\alpha_{\mathbf{i}})_{\mathbf{i}\subseteq \{b_1, \cdots, b_t\}}{\big)}=(\chi_{\mathbf{v}})_{\mathbf{v}\subseteq \{a_1,\cdots, a_s, b_1, \cdots, b_t\}}\, ,
\hspace{1cm}$$

$$\mathrm{where}\ \ \ \ \ \ \ \ \ \ \ \ \
\stackrel{\centerdot}{\theta}_{\mathbf{u}}= \left\{ \begin{array}{ll}
0 &\text{if}\ \ \mathbf{u}\neq \emptyset\\
1 &\text{if}\ \ \mathbf{u}=\emptyset
\end{array} \right.   \ \ \ \ \ \   \mathrm{and}\ \ \ \ \ \ \
\chi_{\mathbf{v}}=\left\{ \begin{array}{ll}
\alpha_{\mathbf{v}}& \text{if}\ \  \mathbf{v}\cap \{a_1, \cdots, a_s\}=\emptyset\\
0 & \text{if}\ \  \mathbf{v}\cap \{a_1, \cdots, a_s\}\neq\emptyset\, ,
\end{array} \right.  $$
are ring homomorphism.
It is easy to check that  $\eta\zeta=\varepsilon$. Hence
$R_{_{\Lambda}}\otimes_R R_{_{\Gamma}}\stackrel{\eta}{\longrightarrow} R_{_{{\Lambda}\cap {\Gamma}}}$
is an $R_{_{\Lambda}}$-algebra isomorphism.

Now let ${\Lambda}\cup {\Gamma}\subsetneq [n]$, then, by Construction~\ref{cons},
there exist $k\in [n]$ and  ${\Lambda}', {\Gamma}'\subseteq[n-1]$
such that there are $R$-isomorphisms and ring isomorphisms
$$R_{_{\Lambda}}\cong Q_k/(\Sigma_{l\in {\Lambda}'} I_{k, l})\, , \ \ \ \ \ \ \ \ \
R_{_{\Gamma}}\cong Q_k/(\Sigma_{l\in {\Gamma}'} I_{k, l})$$
$$R_{_{{\Lambda}\cup {\Gamma}}}\cong Q_k/(\Sigma_{l\in {\Lambda}'\cup {\Gamma}'} I_{k, l})\, ,\ \mathrm{and}\ \ \  R_{_{{\Lambda}\cap {\Gamma}}}\cong Q_k/(\Sigma_{l\in {\Lambda}'\cap {\Gamma}'} I_{k, l})\,.$$
Thus, by induction, for all $i\geqslant 1$
$$\mathrm{Tor}^{R_{_{{\Lambda}\cup {\Gamma}}}}_{i}(R_{_{\Lambda}}, R_{_{\Gamma}})\cong \mathrm{Tor}^{Q_k/(\Sigma_{l\in {\Lambda}'\cup {\Gamma}'} I_{k,l})}_{i}(Q_k/(\Sigma_{l\in {\Lambda}'} I_{k,l}), Q_k/(\Sigma_{l\in {\Gamma}'} I_{k,l}))=0$$
and there is $Q_k/(\Sigma_{l\in {\Lambda}'} I_{k, l})$-algebra isomorphism and so $R_{_{\Lambda}}$-algebra isomorphism as follows
$$\begin{array}{llll}
R_{_{\Lambda}}\otimes_{R_{_{{\Lambda}\cup {\Gamma}}}} R_{_{\Gamma}}  &  \cong & Q_k/(\Sigma_{l\in {\Lambda}'} I_{k,l})\otimes_{Q_k/(\Sigma_{l\in {\Lambda}'\cup {\Gamma}'} I_{k, l})} Q_k/(\Sigma_{l\in {\Gamma}'} I_{k, l}) \\
                             &  \cong & Q_k/(\Sigma_{l\in {\Lambda}'\cap {\Gamma}'} I_{k, l}) \\
                              &  \cong & R_{_{{\Lambda}\cap {\Gamma}}}\,.
\end{array}$$
\end{proof}

\begin{lem}\label{L46}
Under the hypothesis of Theorem \ref{T42}, if ${\Lambda}$ and $\Gamma$ are two subsets of $[n]$, then $\mathrm{Tor}^{R_{_{\Lambda}}}_{\geqslant 1}(R_{_{{\Lambda}\cup {\Gamma}}}, R_{_{{\Lambda}\cap {\Gamma}}})=0$. Moreover, there is an $R_{_{{\Lambda}\cap {\Gamma}}}$-module isomorphism
$R_{_{{\Lambda}\cup {\Gamma}}}\otimes_{R_{_{\Lambda}}} R_{_{{\Lambda}\cap {\Gamma}}}\cong R_{_{\Gamma}}$.
\end{lem}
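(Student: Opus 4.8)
The plan is to reduce the whole statement to Lemma~\ref{L45} by a base-change argument, so that no new induction is needed. The starting observation is that, in the explicit description of the ring structure in Lemma~\ref{L43}, whenever $\Gamma\subseteq\Lambda$ (equivalently $[n]\setminus\Lambda\subseteq[n]\setminus\Gamma$) the inclusion of the sub-sum $\bigoplus_{\mathbf{i}\subseteq[n]\setminus\Lambda}B_{\mathbf{i}}\hookrightarrow\bigoplus_{\mathbf{i}\subseteq[n]\setminus\Gamma}B_{\mathbf{i}}$ is a unital ring homomorphism $R_{_\Lambda}\hookrightarrow R_{_\Gamma}$, split by the evident ring retraction $R_{_\Gamma}\twoheadrightarrow R_{_\Lambda}$. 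In particular one has ring maps $R_{_{\Lambda\cup\Gamma}}\hookrightarrow R_{_\Lambda}$, $R_{_{\Lambda\cup\Gamma}}\hookrightarrow R_{_\Gamma}$ and $R_{_\Lambda}\hookrightarrow R_{_{\Lambda\cap\Gamma}}$, each admitting a ring retraction; and one checks that these are precisely the structure maps implicit in the statement of Lemma~\ref{L45} — for instance, the map $\varepsilon$ built in the proof of Lemma~\ref{L45} is this inclusion $R_{_\Lambda}\hookrightarrow R_{_{\Lambda\cap\Gamma}}$.

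Granting this, the degree-zero assertion is almost immediate. By Lemma~\ref{L45} there is an $R_{_\Lambda}$-algebra isomorphism $R_{_{\Lambda\cap\Gamma}}\cong R_{_\Lambda}\otimes_{R_{_{\Lambda\cup\Gamma}}}R_{_\Gamma}$, so using associativity of the tensor product and the retraction identity $R_{_{\Lambda\cup\Gamma}}\otimes_{R_{_\Lambda}}R_{_\Lambda}\cong R_{_{\Lambda\cup\Gamma}}$ one gets
$$R_{_{\Lambda\cup\Gamma}}\otimes_{R_{_\Lambda}}R_{_{\Lambda\cap\Gamma}}\ \cong\ R_{_{\Lambda\cup\Gamma}}\otimes_{R_{_\Lambda}}\big(R_{_\Lambda}\otimes_{R_{_{\Lambda\cup\Gamma}}}R_{_\Gamma}\big)\ \cong\ \big(R_{_{\Lambda\cup\Gamma}}\otimes_{R_{_\Lambda}}R_{_\Lambda}\big)\otimes_{R_{_{\Lambda\cup\Gamma}}}R_{_\Gamma}\ \cong\ R_{_\Gamma}\,.$$
For the vanishing $\mathrm{Tor}^{R_{_\Lambda}}_{\geqslant1}(R_{_{\Lambda\cup\Gamma}},R_{_{\Lambda\cap\Gamma}})=0$ I would run the derived version of the same computation. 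Fix an $R_{_{\Lambda\cup\Gamma}}$-free resolution $G_\bullet\to R_{_\Gamma}$; since $\mathrm{Tor}^{R_{_{\Lambda\cup\Gamma}}}_{\geqslant1}(R_{_\Lambda},R_{_\Gamma})=0$ by Lemma~\ref{L45}, the complex $F_\bullet:=R_{_\Lambda}\otimes_{R_{_{\Lambda\cup\Gamma}}}G_\bullet$ is a free $R_{_\Lambda}$-resolution of $R_{_\Lambda}\otimes_{R_{_{\Lambda\cup\Gamma}}}R_{_\Gamma}\cong R_{_{\Lambda\cap\Gamma}}$. Hence $\mathrm{Tor}^{R_{_\Lambda}}_{i}(R_{_{\Lambda\cup\Gamma}},R_{_{\Lambda\cap\Gamma}})=\mathrm{H}_i\big(R_{_{\Lambda\cup\Gamma}}\otimes_{R_{_\Lambda}}F_\bullet\big)$, and again by associativity together with $R_{_{\Lambda\cup\Gamma}}\otimes_{R_{_\Lambda}}R_{_\Lambda}\cong R_{_{\Lambda\cup\Gamma}}$ one finds $R_{_{\Lambda\cup\Gamma}}\otimes_{R_{_\Lambda}}F_\bullet\cong G_\bullet$, whose homology is $R_{_\Gamma}$ in degree $0$ and zero in positive degrees. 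This yields both assertions at once.

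The only genuinely delicate point is the bookkeeping of module structures: one must check that the $R_{_{\Lambda\cup\Gamma}}$-structures on $R_{_\Lambda}$ and $R_{_\Gamma}$ and the $R_{_\Lambda}$-structure on $R_{_{\Lambda\cap\Gamma}}$ occurring in Lemma~\ref{L45} really are restriction of scalars along the inclusions above, that $R_{_{\Lambda\cup\Gamma}}$ carries its regular module structure through the retraction identity, and that under the chain of isomorphisms above the second-factor action of $R_{_{\Lambda\cap\Gamma}}$ on $R_{_{\Lambda\cup\Gamma}}\otimes_{R_{_\Lambda}}R_{_{\Lambda\cap\Gamma}}$ passes to the $R_{_{\Lambda\cap\Gamma}}$-action on $R_{_\Gamma}$ through the natural surjection $R_{_{\Lambda\cap\Gamma}}\twoheadrightarrow R_{_\Gamma}$ — each a routine verification in the explicit presentations of Lemma~\ref{L43} and Construction~\ref{cons}, in the spirit of the ``routine'' checks already used in the proof of Lemma~\ref{L45}. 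If one prefers to stay inside the inductive scheme of Lemmas~\ref{L43}--\ref{L45}, the same result follows by induction on $n$: the cases $n\leqslant2$ are in \cite{jls-w}; if $\Lambda\cup\Gamma\subsetneq[n]$ one picks $k\in[n]\setminus(\Lambda\cup\Gamma)$ and transports the statement to $R_k$ via Construction~\ref{cons} (with $(\Lambda\cup\Gamma)'=\Lambda'\cup\Gamma'$ and $(\Lambda\cap\Gamma)'=\Lambda'\cap\Gamma'$, since $\Lambda\mapsto\Lambda'$ is induced by a bijection $[n-1]\to[n]\setminus\{k\}$), while if $\Lambda\cup\Gamma=[n]$ then $R_{_{\Lambda\cup\Gamma}}\cong R$ and the base-change argument above runs verbatim over $R$, using the case $\Lambda\cup\Gamma=[n]$ of Lemma~\ref{L45} and the section $R\hookrightarrow R_{_\Lambda}\twoheadrightarrow R$.
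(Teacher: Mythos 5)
Your argument is correct, and it is a genuinely cleaner route than the one the paper takes, though both start from the same kernel: tensor a resolution up along the flat-base-change supplied by Lemma~\ref{L45}, then tensor back down and invoke associativity. The differences are real. First, the paper runs the case $\Lambda\cup\Gamma=[n]$ explicitly and then handles $\Lambda\cup\Gamma\subsetneq[n]$ by a fresh induction through Construction~\ref{cons}; you invoke Lemma~\ref{L45} for arbitrary $\Lambda,\Gamma$ and do no further induction, which is legitimate since L45 is proved first. Second, and more significantly, the paper constructs the isomorphism $\xi\colon R_{_\Gamma}\to R\otimes_{R_{_\Lambda}}R_{_{\Lambda\cap\Gamma}}$ only as a composite of module isomorphisms and then verifies $R_{_{\Lambda\cap\Gamma}}$-linearity by hand via the rather delicate $1\otimes\delta=0$ computation with the elements $\alpha(\mathbf{w}),\beta(\mathbf{w})$; you instead notice that every arrow in the chain is a ring isomorphism of tensor-product algebras (once one uses the ring sections $R_{_\Lambda}\hookrightarrow R_{_{\Lambda\cap\Gamma}}$, $R_{_{\Lambda\cup\Gamma}}\hookrightarrow R_{_\Lambda}$, $R_{_{\Lambda\cup\Gamma}}\hookrightarrow R_{_\Gamma}$, and checks that $\eta$ is an algebra map via $\varepsilon=\eta\zeta$ as recorded in L45's proof), so that $R_{_{\Lambda\cap\Gamma}}$-linearity of the composite reduces to a single unit-compatibility identity $p\circ\iota_\Lambda=\widetilde{(\cdot)}\circ\pi$ between the projections and sections, which is immediate from the summand description of Lemma~\ref{L43}. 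This buys a conceptually transparent proof with much less bookkeeping; the one thing you would want to spell out in a final write-up (and you correctly flag it) is that the $R_{_\Lambda}$-algebra structure on $R_{_{\Lambda\cap\Gamma}}$ referenced in Lemma~\ref{L45} for general $\Lambda,\Gamma$ is indeed the one given by the ring section (the paper only exhibits $\varepsilon$ in the case $\Lambda\cup\Gamma=[n]$, and in the general case transports the statement through the ring isomorphisms of Construction~\ref{cons}, which do respect the summand decompositions).
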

\begin{proof}
It is proved by induction on $n$. If $n=1$, there is nothing to prove. The case $n=2$ is proved in \cite[Lemma 3.11]{jls-w}.
Let $n>2$ and suppose that the assertion holds true for $n-1$.

First assume that ${\Lambda}\cup {\Gamma}=[n]$.
Let $\mathbf{P}$ be an $R$-projective resolution of $R_{_{\Gamma}}$. Lemma~\ref{L45} implies that
$R_{_{\Lambda}}\otimes_R \mathbf{P}$ is an $R_{_{\Lambda}}$-projective resolution of
$R_{_{\Lambda}}\otimes_R R_{_{\Gamma}}\cong R_{_{{\Lambda}\cap {\Gamma}}}$.
One has the following natural isomorphisms
$$R\otimes_{R_{_{\Lambda}}}(R_{_{\Lambda}}\otimes_R\mathbf{P})\cong (R\otimes_{R_{_{\Lambda}}}R_{_{\Lambda}})\otimes_R\mathbf{P}\cong R\otimes_R \mathbf{P} \cong \mathbf{P}$$
and then, for all $i\geqslant 1$,
$$\hspace{-0.5cm}\mathrm{Tor}^{R_{_{\Lambda}}}_{i}(R, R_{_{{\Lambda}\cap {\Gamma}}})   \cong
\mathrm{H}_i(R\otimes_{R_{_{\Lambda}}}(R_{_{\Lambda}}\otimes_R\mathbf{P})) \cong  \mathrm{H}_i(\mathbf{P}) =0\, .$$
Set $[n]\setminus {\Lambda}=\{b_1, \cdots, b_t\}$  and
$[n]\setminus {\Gamma}=\{a_1, \cdots, a_s\}$.
Then
$[n]\setminus ({\Lambda}\cap {\Gamma})=\{b_1, \cdots, b_t, a_1, \cdots, a_s\}$.
Consider the $R$-module isomorphism
$\xi : R_{_{\Gamma}}\stackrel{\cong}{\longrightarrow}  R\otimes_{R_{_{\Lambda}}}R_{_{{\Lambda}\cap {\Gamma}}}$
which is the composition
$$R_{_{\Gamma}}\stackrel{\cong}{-\hspace{-0.2cm}\longrightarrow} R\otimes_R R_{_{\Gamma}}    \stackrel{\cong}{-\hspace{-0.2cm}\longrightarrow}
R\otimes_{R_{_{\Lambda}}}(R_{_{\Lambda}}\otimes_R \, R_{_{\Gamma}}) \stackrel{\cong}{\underset{R\otimes \eta}{-\hspace{-0.2cm}-\hspace{-0.2cm}-\hspace{-0.2cm}\longrightarrow}}   R\otimes_{R_{_{\Lambda}}}R_{_{{\Lambda}\cap {\Gamma}}}$$
 given by
$$\begin{array}{llll} (\theta_{\mathbf{u}})_{\mathbf{u}\subseteq \{a_1, \cdots, a_s\}}\ \mapsto
\ 1\otimes (\theta_{\mathbf{u}})_{\mathbf{u}\subseteq \{a_1, \cdots, a_s\}}&\mapsto& 1\otimes[(\stackrel{\centerdot}{\alpha}_{\mathbf{i}})_{\mathbf{i}\subseteq \{b_1, \cdots, b_t\}}\otimes(\theta_{\mathbf{u}})_{\mathbf{u}\subseteq \{a_1, \cdots, a_s\}}]\\
&\mapsto& 1\otimes (\lambda_{\mathbf{v}})_{\mathbf{v}\subseteq \{a_1, \cdots, a_s, b_1, \cdots, b_t\}}\, ,
\end{array}$$
$$\mathrm{where} \hspace{1.2cm} \stackrel{\centerdot}{\alpha}_{\mathbf{i}}= \left\{ \begin{array}{ll}
0 &\text{if}\ \ \mathbf{i}\neq \emptyset\\
1 & \text{if}\ \ \mathbf{i}=\emptyset
\end{array} \right.  \ \ \  \ \ \ \ \   \mathrm{and}\ \ \  \ \ \ \ \ \
\lambda_{\mathbf{v}}=\left\{ \begin{array}{ll}
\theta_{\mathbf{v}}&\text{if}\ \  \mathbf{v}\cap \{b_1, \cdots, b_t\}=\emptyset\\
0 &\text{if}\ \ \mathbf{v}\cap \{b_1, \cdots, b_t\}\neq\emptyset
\end{array} \right. . \hspace{2.2cm} $$
We claim that $\xi$ is an $R_{_{{\Lambda}\cap {\Gamma}}}$-module isomorphism.

\textit{Proof of the claim:}
The $R_{_{{\Lambda}\cap {\Gamma}}}$-module structure of $R_{_{\Gamma}}$, which is given via the natural surjection
$R_{_{{\Lambda}\cap {\Gamma}}}\rightarrow R_{_{\Gamma}}$, is described as
$$(\gamma_{\mathbf{v}})_{\mathbf{v}\subseteq \{a_1, \cdots, a_s, b_1, \cdots, b_t\}}(\theta_{\mathbf{u}})_{\mathbf{u}\subseteq \{a_1, \cdots, a_s\}}= (\gamma_{\mathbf{u}})_{\mathbf{u}\subseteq \{a_1, \cdots, a_s\}}(\theta_{\mathbf{u}})_{\mathbf{u}\subseteq \{a_1, \cdots, a_s\}},$$
where $(\gamma_{\mathbf{v}})_{\mathbf{v}\subseteq \{a_1, \cdots, a_s, b_1, \cdots, b_t\}}$ is an element of $R_{_{{\Lambda}\cap {\Gamma}}}$.
In the following we check that
$$\xi\big((\gamma_{\mathbf{v}})_{\mathbf{v}\subseteq \{a_1, \cdots, a_s, b_1, \cdots, b_t\}}(\theta_{\mathbf{u}})_{\mathbf{u}\subseteq \{a_1, \cdots, a_s\}}\big)=
(\gamma_{\mathbf{v}})_{\mathbf{v}\subseteq \{a_1, \cdots, a_s, b_1, \cdots, b_t\}}[\xi((\theta_{\mathbf{u}})_{\mathbf{u}\subseteq \{a_1, \cdots, a_s\}})].$$
Note that
$$\begin{array}{llll}
\xi\big((\gamma_{\mathbf{v}})_{\mathbf{v}\subseteq \{a_1, \cdots, a_s, b_1, \cdots, b_t\}}(\theta_{\mathbf{u}})_{\mathbf{u}\subseteq \{a_1, \cdots, a_s\}}\big)  &  = & \xi((\gamma_{\mathbf{u}})_{\mathbf{u}\subseteq \{a_1, \cdots, a_s\}}(\theta_{\mathbf{u}})_{\mathbf{u}\subseteq \{a_1, \cdots, a_s\}}) \\
 & = & \xi\big( (\sigma_{\mathbf{u}})_{\mathbf{u}\subseteq \{a_1, \cdots, a_s\}}\big) \\
                             &   =  & 1\otimes (\mu_{\mathbf{v}})_{\mathbf{v}\subseteq \{a_1, \cdots, a_s, b_1, \cdots, b_t\}},
\end{array}$$
where
$(\sigma_{\mathbf{u}})_{\mathbf{u}\subseteq \{a_1, \cdots, a_s\}}= (\gamma_{\mathbf{u}})_{\mathbf{u}\subseteq \{a_1, \cdots, a_s\}}(\theta_{\mathbf{u}})_{\mathbf{u}\subseteq \{a_1, \cdots, a_s\}} \  \mathrm{and}\ \mu_{\mathbf{v}}= \left\{ \begin{array}{ll}
\sigma_{\mathbf{v}} &\text{if}\ \ \mathbf{v}\cap \{b_1, \cdots, b_t\}=\emptyset\\
0 &\text{if}\ \ \mathbf{v}\cap \{b_1, \cdots, b_t\}\neq\emptyset
\end{array} \right.. $
On the other hand
$$\begin{array}{llll}
(\gamma_{\mathbf{v}})_{\mathbf{v}\subseteq \{a_1, \cdots, a_s, b_1, \cdots, b_t\}}[\xi((\theta_{\mathbf{u}})_{\mathbf{u}\subseteq \{a_1, \cdots, a_s\}})]&\hspace{-0.15cm} = &\hspace{-0.2cm}(\gamma_{\mathbf{v}})_{\mathbf{v}\subseteq \{a_1, \cdots, a_s, b_1, \cdots, b_t\}}[1\otimes (\lambda_{\mathbf{v}})_{\mathbf{v}\subseteq \{a_1, \cdots, a_s, b_1, \cdots, b_t\}}]\\
    &\hspace{-0.15cm} = &\hspace{-0.2cm} 1\otimes [(\gamma_{\mathbf{v}})_{\mathbf{v}\subseteq \{a_1, \cdots, a_s, b_1, \cdots, b_t\}}(\lambda_{\mathbf{v}})_{\mathbf{v}\subseteq \{a_1, \cdots, a_s, b_1, \cdots, b_t\}}]\\
      &\hspace{-0.15cm} = &\hspace{-0.2cm} 1\otimes (\varrho_{\mathbf{v}})_{\mathbf{v}\subseteq \{a_1, \cdots, a_s, b_1, \cdots, b_t\}}\\
      &\hspace{-0.15cm} = &\hspace{-0.2cm} [1\otimes (\mu_{\mathbf{v}})_{\mathbf{v}\subseteq \{a_1, \cdots, a_s, b_1, \cdots, b_t\}}] + [1\otimes \delta],
\end{array}$$
where
$\delta=(\delta_{\mathbf{v}})_{\mathbf{v}\subseteq \{a_1, \cdots, a_s, b_1, \cdots, b_t\}}$ with
$\delta_{\mathbf{v}}=\left\{ \begin{array}{ll}
0  & \text{if}\ \  \mathbf{v}\cap \{b_1, \cdots, b_t\}=\emptyset\\
\varrho_{\mathbf{v}} & \text{if}\ \  \mathbf{v}\cap \{b_1, \cdots, b_t\}\neq\emptyset
\end{array}\right. .$

It is enough to show that  $1\otimes \delta=0$.
To this end, we have
$$1\otimes \delta=
\sum_{\scriptsize{\begin{array}{c}
\mathbf{w}\subseteq \{a_1, \cdots, a_s, b_1, \cdots, b_t\}\\
 \mathbf{w}\cap \{b_1, \cdots, b_t\}\neq \emptyset
 \end{array}}}
1\otimes \delta({\tiny{\mathbf{w}}}),$$
where
$\delta({\tiny{\mathbf{w}}})=\big(\delta({\tiny{\mathbf{w}}})_{\mathbf{v}} \big)_{\mathbf{v}\subseteq
\{a_1, \cdots, a_s, b_1, \cdots, b_t\}}$
with
$\delta({\tiny{\mathbf{w}}})_{\mathbf{v}}=\left\{ \begin{array}{ll}
0  & \text{if}\ \  \mathbf{v}\neq  \mathbf{w}\\
\delta_{\mathbf{w}} & \text{if}\ \  \mathbf{v}= \mathbf{w}
\end{array}\right. .$
For each $\mathbf{w}$ there exist $\mathbf{w}'\subseteq \{ b_1, \cdots, b_t\}$ and $\mathbf{w}''\subseteq\{a_1, \cdots, a_s\}$ with
$\mathbf{w}'\cup \mathbf{w}''=\mathbf{w}$. Thus $B_{\mathbf{w}'}\otimes_R B_{\mathbf{w}''}\stackrel{\rho_{\mathbf{w}}}{\cong} B_{\mathbf{w}}$ and there exist
$\delta'_{\mathbf{w}}\in B_{\mathbf{w}'}$ and  $\delta''_{\mathbf{w}}\in B_{\mathbf{w}''}$
such that
$\delta_{\mathbf{w}}=\rho_{\mathbf{w}}(\delta'_{\mathbf{w}}\otimes \delta''_{\mathbf{w}})$.

Set
$\alpha(\tiny{\mathbf{w}})=\big(\alpha(\tiny{\mathbf{w}})_{\mathbf{i}}\big)_{\mathbf{i}
\subseteq \{ b_1, \cdots, b_t\}}$, where
$\alpha({\tiny{\mathbf{w}}})_{\mathbf{i}}=\left\{\begin{array}{ll}
0  & \text{if}\ \  \mathbf{i}\neq \mathbf{w}'\\
\delta'_{\mathbf{w}} & \text{if}\ \  \tiny{\mathbf{i}= \mathbf{w}'}
\end{array} \right.$.
As the $R_{_\Lambda}$-module structure on $R$ is given via the natural surjection $R_{_\Lambda}\longrightarrow R$, and $\alpha(\tiny{\mathbf{w}})$ is an element
of the kernel of this map,
$0\oplus(\oplus_{\mathbf{i}\subseteq \{ b_1, \cdots, b_t\}, \mathbf{i}\neq \emptyset} B_{\mathbf{i}})$,
we have $1\,\alpha(\tiny{\mathbf{w}})=0$.
Set
$\beta(\tiny{\mathbf{w}})=\big( \beta(\tiny{\mathbf{w}})_{\mathbf{v}}\big)_{\mathbf{v}\subseteq \{a_1, \cdots, a_s, b_1, \cdots, b_t\}}$, where
$\beta({\tiny{\mathbf{w}}})_{\mathbf{v}}=\left\{\begin{array}{ll}
0 & \text{if}\ \  \mathbf{v}\neq \mathbf{w}''\\
\delta''_{\mathbf{w}}  & \text{if}\ \  \tiny{\mathbf{v}=\mathbf{w}''}
\end{array} \right.\!.$
Note that $\beta(\tiny{\mathbf{w}})$ is an element of $R_{_{{\Lambda}\cap {\Gamma}}}$ and
$\delta({\tiny{\mathbf{w}}})=\alpha(\tiny{\mathbf{w}})\beta(\tiny{\mathbf{w}})$.
Then
$$1\otimes \delta=\sum_{\mathbf{w}} 1\otimes \delta({\tiny{\mathbf{w}}})=
\sum_{\mathbf{w}} 1\otimes[\alpha(\tiny{\mathbf{w}}) \beta(\tiny{\mathbf{w}})]
= \sum_{\mathbf{w}} [1\alpha(\tiny{\mathbf{w}})]\otimes
\beta(\tiny{\mathbf{w}})
=\sum_{\mathbf{w}} 0\otimes\beta(\tiny{\mathbf{w}})=0.$$
Therefore the claim is proved and also the assertion holds in the case
${\Lambda}\cup {\Gamma}= [n]$.

\hspace{-0.3cm}We treat the case ${\Lambda}\cup {\Gamma}\subsetneq [n]$ by induction and its details are similar to the proof of Lemma~\ref{L45}.
\end{proof}

\vspace{0.2cm}
\emph{Proof of Theorem \ref{T42}}. \  (1) is proved in Construction~\ref{cons}.

(2). It is proved by induction on $n$. The case $n=1$ is clear by assumptions.
Let $n>1$ and the claim is settled for $n-1$.
If $\Lambda=[n]$, then $R_{_\Lambda}\cong R$  and is Cohen-Macaulay with the dualizing module $D$ and is not Gorenstein.
Let $\Lambda\subsetneq [n]$.  There exists $k\in [n]$ such that
$\Lambda\subseteq [n]\setminus \{k\}$. By Construction~\ref{cons}, there exists
a subset $\Lambda'\neq \emptyset$ of $[n-1]$ such that
$R_{_\Lambda}\cong Q_k/(\Sigma_{l\in \Lambda'} I_{k,l})$ as ring isomorphism. Thus, by induction, $R_{_\Lambda}$ is non-Gorenstein Cohen-Macaulay ring with dualizing module.

(3). It is clear that $\prod_{l\in \Lambda} I_l\subseteq \bigcap_{l\in \Lambda}I_l$\,. Let $\alpha=\big(\alpha_{\mathbf{i}}\big)_{\mathbf{i}\subseteq [n]}$ be an element of $\bigcap_{l\in \Lambda}I_l$\,.
Then, by Construction~\ref{cons}, $\alpha_{\mathbf{i}}=0$ for all $\mathbf{i}\subseteq [n]$ with $\Lambda\nsubseteq \mathbf{i}$\,. We have
$\alpha=\sum_{\Lambda\subseteq \mathbf{v}\subseteq[n]}\alpha(\tiny{\mathbf{v}})$,
where
$\alpha({\tiny{\mathbf{v}}})=\big(\alpha({\tiny{\mathbf{v}}})_{\mathbf{i}} \big)_{\mathbf{i}\subseteq [n]}$
with
$\alpha({\tiny{\mathbf{v}}})_{\mathbf{i}}=\left\{ \begin{array}{ll}
0  &\text{if}\ \  \mathbf{i}\neq  \mathbf{v}\\
\alpha_{\mathbf{v}} &\text{if}\ \  \mathbf{i}= \mathbf{v}
\end{array}\right.$.  Set $\Lambda=\{a_1, \cdots, a_m\}$.
If $\mathbf{v}\subseteq [n]$ such that
$\Lambda\subseteq \mathbf{v}$, then
$\mathbf{v}=\{a_1\}\cup\{a_2\}\cup\cdots \cup\{a_{m-1}\}\cup(\mathbf{v}\setminus\{a_1, \cdots, a_{m-1}\})$.
Thus
$$B_{\mathbf{v}}\stackrel{\Phi}{\cong }B_{\{a_1\}}\otimes_R \cdots \otimes_R B_{\{a_{m-1}\}}\otimes_R B_{\mathbf{v}\setminus\{a_1, \cdots, a_{m-1}\}}.$$
Therefore there exist $\theta_{\mathbf{v}, m}\in B_{\mathbf{v}\setminus\{a_1, \cdots, a_{m-1}\}}$ and $\theta_{\mathbf{v}, r}\in B_{\{a_r\}}$, $1\leqslant r< m$, such that
$\alpha_{\mathbf{v}}=\Phi(\theta_{\mathbf{v}, 1}\otimes\cdots \otimes\theta_{\mathbf{v}, m-1}\otimes \theta_{\mathbf{v}, m})$. Set
$\varphi({\tiny{\mathbf{v}}}, r)=\big(\varphi({\tiny{\mathbf{v}}}, r)_{\mathbf{i}}\big)_{\mathbf{i}\subseteq [n]}$, $1\leqslant r\leqslant m$, where, for
$1\leqslant r < m$,
 \vspace{0.2cm}

\hspace{-0.4cm}$\varphi({\tiny{\mathbf{v}}}, r)_{\mathbf{i}}=\left\{\begin{array}{ll}
0  & \text{if}\ \ \mathbf{i}\neq \{a_r\}\\
\theta_{\mathbf{v}, r} & \text{if}\ \ \mathbf{i}=\{a_r\}
\end{array} \right.$
and \
$\varphi({\tiny{\mathbf{v}}}, m)_{\mathbf{i}}=\left\{\begin{array}{ll}
0  & \text{if}\  \  \mathbf{i}\neq \mathbf{v}\setminus \{a_1, \cdots, a_{m-1}\}\\
\theta_{\mathbf{v}, m} & \text{if}\ \ \mathbf{i}=\mathbf{v}\setminus \{a_1, \cdots, a_{m-1}\}\,.
\end{array} \right.$
\vspace{0.2cm}

Note that $\varphi({\tiny{\mathbf{v}}}, r)\in I_{a_r}$, $1\leqslant r\leqslant m$.
Hence
$\varphi({\tiny{\mathbf{v}}}, 1)\cdots \varphi({\tiny{\mathbf{v}}}, m-1)
\varphi({\tiny{\mathbf{v}}}, m)\in \prod_{l\in \Lambda} I_l$.
On the other hand
$\varphi({\tiny{\mathbf{v}}}, 1)\cdots \varphi({\tiny{\mathbf{v}}}, m-1)
\varphi({\tiny{\mathbf{v}}}, m)= \alpha(\tiny{\mathbf{v}})$. Thus
$\alpha(\tiny{\mathbf{v}})$ is an element of $\prod_{l\in \Lambda} I_l$ and so
$\alpha\in\prod_{l\in \Lambda} I_l$.

(4) is followed by Remark~\ref{R1} and Lemma~\ref{L44}.

(5). Let $\mathbf{P}$ be a projective resolution of $R_{_{\Lambda\cup \Gamma}}$ over $R_{_{\Lambda}}$.
Lemma~\ref{L46} implies that the complex
$\mathbf{P}\otimes_{R_{_{\Lambda}}} R_{_{{\Lambda}\cap {\Gamma}}}$
is a $R_{_{{\Lambda}\cap {\Gamma}}}$-projective resolution of
$R_{_{{\Lambda}\cup {\Gamma}}}\otimes_{R_{_{\Lambda}}} R_{_{{\Lambda}\cap {\Gamma}}}\cong R_{_{\Gamma}}$.
From the isomorphisms
$$(\mathbf{P}\otimes_{R_{_{\Lambda}}} R_{_{{\Lambda}\cap {\Gamma}}})\otimes_{R_{_{{\Lambda}\cap {\Gamma}}}}R_{_{\Lambda}}
    \cong   \mathbf{P}\otimes_{R_{_{\Lambda}}}R_{_{\Lambda}}\cong \mathbf{P}$$
one gets
$$\mathrm{Tor}^{R_{_{{\Lambda}\cap {\Gamma}}}}_{i}(R_{_{\Gamma}}, R_{_{\Lambda}})
\cong \mathrm{H}_i((\mathbf{P}\otimes_{R_{_{\Lambda}}} R_{_{{\Lambda}\cap {\Gamma}}})\otimes_{R_{_{{\Lambda}\cap {\Gamma}}}}R_{_{\Lambda}}) \cong
\mathrm{H}_i(\mathbf{P}) =0\, .$$
for all $i\geqslant 1$. There is a natural isomorphism
$R_{_{\Lambda}}\otimes_{R_{_{{\Lambda}\cap {\Gamma}}}}R_{_{\Gamma}}\cong R_{_{{\Lambda}\cup {\Gamma}}}$
which is both $R_{_{{\Lambda}\cap {\Gamma}}}$- and $R_{_{\Gamma}}$-isomorphism.

Let $\mathbf{P}'$ be an $R_{_{{\Lambda}\cap {\Gamma}}}$-projective resolution of $R_{_{\Lambda}}$. As seen in the above,
$\mathbf{P}'\otimes_{R_{_{{\Lambda}\cap {\Gamma}}}}R_{_{\Gamma}}$
is a projective resolution of $R_{_{{\Lambda}\cup {\Gamma}}}$ over
$R_{_{\Gamma}}$. Therefore we have
$$\begin{array}{lll}
\mathrm{Ext}^i_{R_{_{{\Lambda}\cap {\Gamma}}}}(R_{_{\Lambda}}, R_{_{\Gamma}})&\cong &
\mathrm{H}^i(\mathrm{Hom}_{R_{_{{\Lambda}\cap {\Gamma}}}}(\mathbf{P}', R_{_{\Gamma}})\\
 & \cong & \mathrm{H}^i(\mathrm{Hom}_{R_{_{\Gamma}}}(\mathbf{P}'\otimes_{R_{_{{\Lambda}\cap {\Gamma}}}}R_{_{\Gamma}}, R_{_{\Gamma}})\\
 & \cong & \mathrm{Ext}^i_{R_{_{\Gamma}}}(R_{_{{\Lambda}\cup {\Gamma}}}, R_{_{\Gamma}})
 \end{array}$$
for all $i\geqslant 0$. By (4), $\mbox{G}$-$\mathrm{dim}_{R_{_{\Gamma}}}R_{_{{\Lambda}\cup {\Gamma}}}=0$, and so one gets
$\mathrm{Ext}^{\geqslant 1}_{R_{_{{\Lambda}\cap {\Gamma}}}}(R_{_{\Lambda}}, R_{_{\Gamma}})=0.$
Also, by (4), $\mathrm{Hom}_{R_{_{\Gamma}}}(R_{_{{\Lambda}\cup {\Gamma}}}, R_{_{\Gamma}})$ is a non-free semidualizing
$R_{_{{\Lambda}\cup {\Gamma}}}$-module and thus $\mathrm{Hom}_{R_{_{{\Lambda}\cap {\Gamma}}}}(R_{_{\Lambda}}, R_{_{\Gamma}})$ is not cyclic.

(6).  As $R_{_{{\Lambda}\cap {\Gamma}}}=Q/(\Sigma_{l\in {\Lambda}\cap {\Gamma}} I_l) $ and
$R_{_{\Lambda}}=Q/(\Sigma_{l\in {\Lambda}} I_l)\cong R_{_{{\Lambda}\cap {\Gamma}}}/
({\Sigma_{l\in {\Lambda}} I_l/(\Sigma_{l\in {\Lambda}\cap {\Gamma}} I_l)})$,
one has the natural isomorphism
$$\kappa: \mathrm{Hom}_{R_{_{{\Lambda}\cap {\Gamma}}}}(R_{_{\Lambda}} , R_{_{{\Lambda}\cap {\Gamma}}})\longrightarrow\big(0 :_{R_{_{{\Lambda}\cap {\Gamma}}}} {\Sigma_{l\in {\Lambda}} I_l/(\Sigma_{l\in {\Lambda}\cap {\Gamma}} I_l)} \big), \ \
\kappa(\psi)=\psi(\stackrel{\centerdot}{\alpha}),$$
where
$\stackrel{\centerdot}{\alpha}=(\stackrel{\centerdot}{\alpha}_{\mathbf{i}})_{\mathbf{i}\subseteq [n]\setminus \Lambda}$
with
$\stackrel{\centerdot}{\alpha}_{\mathbf{i}}= \left\{ \begin{array}{ll}
0 &\text{if}\ \ \mathbf{i}\neq \emptyset\\
1 & \text{if}\ \ \mathbf{i}=\emptyset
\end{array} \right.$
is the identity element of $R_{_\Lambda}$.

Next we show that
$\big(0 :_{R_{_{{\Lambda}\cap {\Gamma}}}} {\Sigma_{l\in {\Lambda}} I_l/(\Sigma_{l\in {\Lambda}\cap {\Gamma}} I_l)} \big)= {\Sigma_{l\in {\Lambda}} I_l/(\Sigma_{l\in {\Lambda}\cap {\Gamma}} I_l)}$.
Set $\Lambda\setminus\Gamma=\{a\}$.
Let
$\gamma=(\gamma_{\mathbf{i}})_{\mathbf{i}\subseteq [n]\setminus{{\Lambda}\cap {\Gamma}}}$
be an element of
$\big(0 :_{R_{_{{\Lambda}\cap {\Gamma}}}} {\Sigma_{l\in {\Lambda}} I_l/(\Sigma_{l\in {\Lambda}\cap {\Gamma}} I_l)} \big)$.
If $\gamma\notin {\Sigma_{l\in {\Lambda}} I_l/(\Sigma_{l\in {\Lambda}\cap {\Gamma}} I_l)}$, then
there exists $\mathbf{v}\subseteq [n]\setminus{{\Lambda}\cap {\Gamma}}$ such that
$a\notin \mathbf{v}$ and $\gamma_{\mathbf{v}}\neq 0$.
Set $M=R\gamma_{\mathbf{v}}$, which is a non-zero submodule of $B_{\mathbf{v}}$.
As $B_a$ is a semidualizing $R$-module and $M\neq 0$, we have $B_a\otimes_R M\neq0$.
Thus there exists an element $e$ of $B_a$ such that $e\otimes\gamma_{\mathbf{v}}\neq0$.
Set $\theta=(\theta_{\mathbf{i}})_{\mathbf{i}\subseteq [n]\setminus{{\Lambda}\cap {\Gamma}}}$,
where \\
$\theta_{\mathbf{i}}= \left\{ \begin{array}{ll}
0 &\text{if}\ \ \mathbf{i}\neq \{a\}\\
e & \text{if}\ \ \mathbf{i}=\{a\}
\end{array} \right.$.
Note that $\theta$ is an element of
${\Sigma_{l\in {\Lambda}} I_l/(\Sigma_{l\in {\Lambda}\cap {\Gamma}} I_l)}$ and
$\gamma \theta \neq0$, which contradicts with
$\gamma\in \big(0 :_{R_{_{{\Lambda}\cap {\Gamma}}}} {\Sigma_{l\in {\Lambda}} I_l/(\Sigma_{l\in {\Lambda}\cap {\Gamma}} I_l)} \big)$.
Therefore
$$\big(0 :_{R_{_{{\Lambda}\cap {\Gamma}}}} {\Sigma_{l\in {\Lambda}} I_l/(\Sigma_{l\in {\Lambda}\cap {\Gamma}} I_l)} \big)\subseteq {\Sigma_{l\in {\Lambda}} I_l/(\Sigma_{l\in {\Lambda}\cap {\Gamma}} I_l)}.$$
On the other hand
${\Sigma_{l\in {\Lambda}} I_l/(\Sigma_{l\in {\Lambda}\cap {\Gamma}} I_l)}\subseteq
\big(0 :_{R_{_{{\Lambda}\cap {\Gamma}}}} {\Sigma_{l\in {\Lambda}} I_l/(\Sigma_{l\in {\Lambda}\cap {\Gamma}} I_l)} \big)$. Indeed, if
$\alpha=(\alpha_{\mathbf{i}})_{\mathbf{i}\subseteq [n]\setminus{{\Lambda}\cap {\Gamma}}}$
and
$\alpha'=(\alpha'_{\mathbf{i}})_{\mathbf{i}\subseteq [n]\setminus{{\Lambda}\cap {\Gamma}}}$
are two elements of ${\Sigma_{l\in {\Lambda}} I_l/(\Sigma_{l\in {\Lambda}\cap {\Gamma}} I_l)}$,
then $\alpha_{\mathbf{i}}=0=\alpha'_{\mathbf{i}}$ for all $\mathbf{i}$ such that  $a\notin\mathbf{i}$.
Hence, by Lemma~\ref{L43},  $\alpha\alpha'=0$.
Thus
\begin{equation}\label{e47}
\kappa: \mathrm{Hom}_{R_{_{{\Lambda}\cap {\Gamma}}}}(R_{_{\Lambda}} , R_{_{{\Lambda}\cap {\Gamma}}})\longrightarrow{\Sigma_{l\in {\Lambda}} I_l/(\Sigma_{l\in {\Lambda}\cap {\Gamma}} I_l)}, \ \
\kappa(\psi)=\psi(\stackrel{\centerdot}{\alpha})
\end{equation}
is an $R_{_{{\Lambda}\cap {\Gamma}}}$-isomorphism.

By (4), $\mbox{G}$-$\mathrm{dim}_{R_{_{{\Lambda}\cap {\Gamma}}}} R_{_{\Lambda}}=0$.
Let $\mathbf{F}$ be a minimal free resolution of $R_{_{\Lambda}}$ over $R_{_{{\Lambda}\cap {\Gamma}}}$. Note that ${\Sigma_{l\in {\Lambda}} I_l/(\Sigma_{l\in {\Lambda}\cap {\Gamma}} I_l)}$ is the first syzygy of $R_{_{\Lambda}}$ in $\mathbf{F}$. By \cite[Construction 3.6]{am} and
(\ref{e47}), we can construct  a Tate resolution of $R_{_{\Lambda}}$ as $\mathbf{T}\rightarrow \mathbf{F}\rightarrow R_{_{\Lambda}}$, where $\mathbf{T}$ construct by splicing $\mathbf{F}$ with
$\mathrm{Hom}_{R_{_{{\Lambda}\cap {\Gamma}}}}(\mathbf{F} , R_{_{{\Lambda}\cap {\Gamma}}})$. Hence
$\mathbf{T}\cong \mathrm{Hom}_{R_{_{{\Lambda}\cap {\Gamma}}}}(\mathbf{T} , R_{_{{\Lambda}\cap {\Gamma}}})$.
This explains the first isomorphism in the next sequence
\begin{equation}\label{e43}
\begin{array}{lll}
\widehat{\mathrm{Tor}}^{R_{_{{\Lambda}\cap {\Gamma}}}}_i(R_{_{\Lambda}}, R_{_{\Gamma}})& = & \mathrm{H}_i\big(\mathbf{T}\otimes_{R_{_{{\Lambda}\cap {\Gamma}}}} R_{_{\Gamma}}\big)\\
  & \cong & \mathrm{H}_i\big(\mathrm{Hom}_{R_{_{{\Lambda}\cap {\Gamma}}}}(\mathbf{T} , R_{_{{\Lambda}\cap {\Gamma}}})\otimes_{R_{_{{\Lambda}\cap {\Gamma}}}} R_{_{\Gamma}}\big)\\
  & \cong & \mathrm{H}_i\big(\mathrm{Hom}_{R_{_{{\Lambda}\cap {\Gamma}}}}(\mathbf{T} , R_{_{\Gamma}})\big)\\
  & = & \widehat{\mathrm{Ext}}^{-i}_{R_{_{{\Lambda}\cap {\Gamma}}}}(R_{_{\Lambda}}, R_{_{\Gamma}})
\end{array}
\end{equation}
for all $i\in \mathbb{Z}$.
As each $R_{_{{\Lambda}\cap {\Gamma}}}$-module $\mathbf{T}_i$ is finite and free,
the second isomorphism follows.

By (4), $\mbox{G}$-$\mathrm{dim}_{R_{_{{\Lambda}\cap {\Gamma}}}} R_{_{\Lambda}}=0$ and so,
by \cite[Theorem 5.2]{am}, one has
\begin{equation}\label{e44}
\widehat{\mathrm{Tor}}^{R_{_{{\Lambda}\cap {\Gamma}}}}_i(R_{_{\Lambda}}, R_{_{\Gamma}})
 \cong  \mathrm{Tor}^{R_{_{{\Lambda}\cap {\Gamma}}}}_i(R_{_{\Lambda}}, R_{_{\Gamma}})
 \ \ \ \ \mathrm{and}\ \ \ \ \
 \widehat{\mathrm{Ext}}^{i}_{R_{_{{\Lambda}\cap {\Gamma}}}}(R_{_{\Lambda}}, R_{_{\Gamma}}) \cong  \mathrm{Ext}^{i}_{R_{_{{\Lambda}\cap {\Gamma}}}}
 (R_{_{\Lambda}}, R_{_{\Gamma}})
\end{equation}
for all $i\geqslant 1$. Thus, by (\ref{e43}), (\ref{e44}) and (5), one gets
$$\widehat{\mathrm{Ext}}^{-i}_{R_{_{{\Lambda}\cap {\Gamma}}}}(R_{_{\Lambda}}, R_{_{\Gamma}})\cong \widehat{\mathrm{Tor}}^{R_{_{{\Lambda}\cap {\Gamma}}}}_i(R_{_{\Lambda}}, R_{_{\Gamma}})\cong  \mathrm{Tor}^{R_{_{{\Lambda}\cap {\Gamma}}}}_i(R_{_{\Lambda}}, R_{_{\Gamma}})=0\,,$$
$$\widehat{\mathrm{Tor}}^{R_{_{{\Lambda}\cap {\Gamma}}}}_{-i}(R_{_{\Lambda}}, R_{_{\Gamma}})  \cong  \widehat{\mathrm{Ext}}^{i}_{R_{_{{\Lambda}\cap {\Gamma}}}}(R_{_{\Lambda}}, R_{_{\Gamma}})
  \cong  \mathrm{Ext}^{i}_{R_{_{{\Lambda}\cap {\Gamma}}}}(R_{_{\Lambda}}, R_{_{\Gamma}})=0$$
for all $i\geqslant 1$. Therefore, by (\ref{e43}), to complete the proof it is enough to show that $\widehat{\mathrm{Ext}}^{0}_{R_{_{{\Lambda}\cap {\Gamma}}}}(R_{_{\Lambda}}, R_{_{\Gamma}})=0$.
As $\widehat{\mathrm{Ext}}^{-1}_{R_{_{{\Lambda}\cap {\Gamma}}}}(R_{_{\Lambda}}, R_{_{\Gamma}})=0$ and $R_{_{\Lambda}}$ is totally reflexive as an
$R_{_{{\Lambda}\cap {\Gamma}}}$-module one has, by \cite[Lemma 5.8]{am},
the exact sequence
\begin{equation}\label{e48}
0\rightarrow \mathrm{Hom}_{R_{_{{\Lambda}\cap {\Gamma}}}}(R_{_{\Lambda}} , R_{_{{\Lambda}\cap {\Gamma}}})\otimes_{R_{_{{\Lambda}\cap {\Gamma}}}} R_{_{\Gamma}}
 \stackrel{\nu}{\longrightarrow} \mathrm{Hom}_{R_{_{{\Lambda}\cap {\Gamma}}}}(R_{_{\Lambda}}, R_{_{\Gamma}})
  \longrightarrow  \widehat{\mathrm{Ext}}^{0}_{R_{_{{\Lambda}\cap {\Gamma}}}}(R_{_{\Lambda}}, R_{_{\Gamma}})\rightarrow 0\,,
\end{equation}
where the map $\nu$ is given by
$$\nu(\psi\otimes \theta)=\psi_{_\theta}\, , \ \ \  \psi_{_\theta}(\alpha)=\psi(\alpha)\theta\,  .$$
In a similar way to (\ref{e47}), one gets  the natural
 isomorphism
$\tau: \mathrm{Hom}_{R_{_{\Gamma}}}(R_{_{{\Lambda}\cup {\Gamma}}},
R_{_{\Gamma}})\longrightarrow
{\Sigma_{l\in {\Lambda}\cup {\Gamma}} I_l/(\Sigma_{l\in {\Gamma}} I_l)}$
 given by $\tau(\psi)=\psi(\stackrel{\centerdot}{\varphi})$,
where $\stackrel{\centerdot}{\varphi}$
is the identity element of $R_{_{{\Lambda}\cup {\Gamma}}}$.
It is straightforward to show that the following diagram commutes:
$$\begin{array}{ccc}
\mathrm{Hom}_{R_{_{{\Lambda}\cap {\Gamma}}}}(R_{_{\Lambda}} , R_{_{{\Lambda}\cap {\Gamma}}})\otimes_{R_{_{{\Lambda}\cap {\Gamma}}}} R_{_{\Gamma}} &
\stackrel{\nu}{-\hspace{-0.2cm}-\hspace{-0.2cm}-\hspace{-0.2cm}-\hspace{-0.2cm}-\hspace{-0.2cm}
-\hspace{-0.2cm}-\hspace{-0.2cm}-\hspace{-0.2cm}-\hspace{-0.2cm}-\hspace{-0.2cm}-\hspace{-0.2cm}
-\hspace{-0.2cm}-\hspace{-0.2cm}-\hspace{-0.2cm}-\hspace{-0.2cm}-\hspace{-0.2cm}-\hspace{-0.2cm}
-\hspace{-0.2cm}-\hspace{-0.2cm}-\hspace{-0.2cm}-\hspace{-0.2cm}-\hspace{-0.2cm}-\hspace{-0.2cm}
-\hspace{-0.2cm}-\hspace{-0.2cm}-\hspace{-0.2cm}-\hspace{-0.2cm}-\hspace{-0.2cm}-\hspace{-0.2cm}
-\hspace{-0.2cm}-\hspace{-0.2cm}-\hspace{-0.2cm}-\hspace{-0.2cm}-\hspace{-0.2cm}
\longrightarrow} &
\mathrm{Hom}_{R_{_{{\Lambda}\cap {\Gamma}}}}(R_{_{\Lambda}}, R_{_{\Gamma}}) \\
\begin{array}{ll} \kappa\otimes R_{_\Gamma} {\Bigg\downarrow} \cong \end{array} &
        &  \begin{array}{ll} f {\bigg\downarrow} \cong \end{array}\\
{\Sigma_{l\in {\Lambda}} I_l/(\Sigma_{l\in {\Lambda}\cap {\Gamma}} I_l)}\otimes_{R_{_{{\Lambda}\cap {\Gamma}}}} R_{_{\Gamma}} &
 \stackrel{\cong}{\underset{g}{\longrightarrow}}
I_a/(\Sigma_{l\in {\Gamma}} I_aI_l)
\stackrel{\cong}{\underset{h}{\longrightarrow}}
\Sigma_{l\in {\Lambda}\cup {\Gamma}} I_l/(\Sigma_{l\in {\Gamma}} I_l)
\stackrel{\cong}{\underset{\tau}{\longleftarrow}}&
\mathrm{Hom}_{R_{_{\Gamma}}}(R_{_{{\Lambda}\cup {\Gamma}}}, R_{_{\Gamma}})\,,
\end{array}$$
where the maps $f, g$ and $h$ are natural isomorphisms.
Hence  $\nu$ is surjective and  (\ref{e48}) implies that
$\widehat{\mathrm{Ext}}^{0}_{R_{_{{\Lambda}\cap {\Gamma}}}}(R_{_{\Lambda}}, R_{_{\Gamma}})=0$.
\hspace{10.3cm}$\Box$

The following results give a partial converse to Theorem~\ref{T42}.
Note that Proposition~\ref{P43} is a generalization of the result
of Jorgensen et.\! al. \cite[Theorem 3.1]{jls-w}.
\begin{pro}\label{P42}
Let $R$ be a Cohen-Macaulay ring. Assume that there exist a Gorenstein
local ring $Q$  and ideals $I_1, \cdots, I_n$ of $Q$
satisfying the following conditions.
\begin{itemize}
\item[(1)] There is a ring isomorphism $R\cong Q/(I_1+\cdots + I_n)$.
\item[(2)] The ring $R_k=Q/(I_1+\cdots +I_k)$ is Cohen-Macaulay for all $k\in [n]$.
\item[(3)]  $\mathrm{fd}_{R_j}(R_k)<\infty$ for all $k\in [n]$ and all $1\leqslant j \leqslant k$.
\item[(4)]  For each $k\in [n]$ and all $0\leqslant j < k$,
$\mathrm{I}_{R_k}^{R_k}(t)\neq t^e \mathrm{I}_{R_{j}}^{R_{j}}(t)$ for any integer $e$. ($R_0=Q$)
\end{itemize}
Then there exist integers $g_0, g_1, \cdots, g_{n-1}$ such that
$$[\mathrm{Ext}_{Q}^{g_0}(R, Q)] \vartriangleleft [\mathrm{Ext}_{R_1}^{g_1}(R, R_1)]\vartriangleleft \cdots \vartriangleleft
[\mathrm{Ext}_{R_{n-1}}^{g_{n-1}}(R, R_{n-1})]\vartriangleleft [R]$$
is a chain in $\mathfrak{G}_0(R)$ of length $n$.
\end{pro}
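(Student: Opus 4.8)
The plan is to build the chain one link at a time, moving up the tower $Q=R_0\to R_1\to\cdots\to R_n=R$, and to use the finite-flat-dimension hypothesis together with Remark~\ref{R1} to produce semidualizing modules at each stage. First I would observe that for each $k\in[n]$ and each $j$ with $0\leqslant j<k$, the ring map $R_j\to R_k$ is a surjective (hence module-finite) local homomorphism of Cohen-Macaulay rings, and $\mathrm{fd}_{R_j}(R_k)<\infty$ by (3). In particular $\mathrm{G}_{R_j}$-$\dim_{R_j}(R_k)<\infty$ (finite flat dimension implies finite G-dimension with respect to the free semidualizing module $R_j$), so Remark~\ref{R1} applies: there is an integer $g\geqslant 0$ with $\mathrm{Ext}^i_{R_j}(R_k,R_j)=0$ for $i\neq g$ and $\mathrm{Ext}^g_{R_j}(R_k,R_j)$ a semidualizing $R_k$-module. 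Taking $j=k-1$ gives the candidate modules: set $g_{k-1}$ to be this integer for the pair $(R_{k-1},R_k)$ and, more to the point, apply this with $R_k$ replaced by $R$ (which is a module-finite $R_{k-1}$-algebra of finite flat dimension over $R_{k-1}$, since $\mathrm{fd}_{R_{k-1}}(R)<\infty$ follows from (3) via the tower) to obtain $D_{k-1}:=\mathrm{Ext}^{g_{k-1}}_{R_{k-1}}(R,R_{k-1})$, a semidualizing $R$-module for each $k\in[n]$, with $D_0=\mathrm{Ext}^{g_0}_Q(R,Q)$ and $D_n$ (formally $\mathrm{Ext}^0_R(R,R)=R$) sitting at the top.

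Next I would establish the reflexivity relations $[D_{k-1}]\vartriangleleft[D_k]$ for each $k$; equivalently, that $D_k$ (viewed as an $R$-module) is totally $D_{k-1}$-reflexive and the two are non-isomorphic. For the reflexivity, the key is a change-of-rings / composition argument: one has $R$ module-finite of finite flat dimension over $R_{k-1}$ and over $R_k$, and $R_k$ module-finite over $R_{k-1}$; a standard spectral-sequence or "transitivity of G-dimension along a tower" computation (in the style of the relations used in Lemmas~\ref{L44}--\ref{L46}) should identify $\mathrm{Hom}_R$-duality with respect to $D_{k-1}$ on suitable modules with iterated $\mathrm{Ext}$ over the base rings, and show that the natural biduality map is an isomorphism with the higher $\mathrm{Ext}$'s vanishing. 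Concretely, I expect $\mathrm{Hom}_R(D_k,D_{k-1})$ to be computed as $\mathrm{Ext}^{g_{k-1}-g_k'}_{R_{k-1}}(R_k,R_{k-1})$-type module, another semidualizing, and the defining conditions of Definition~\ref{reflexive} to fall out of the vanishing in Remark~\ref{R1} applied at each level. This is the step I expect to be the main obstacle: carefully tracking the grading shifts $g_j$ and verifying that all the relevant $\mathrm{Ext}$-modules vanish outside a single degree simultaneously, so that the bidualizing isomorphism and the $\mathrm{Ext}$-vanishing of Definition~\ref{reflexive}(ii) both hold; this is exactly where hypothesis (2), guaranteeing each intermediate $R_k$ is Cohen-Macaulay, is used to keep the G-dimensions finite and the relevant $\mathrm{Ext}$'s concentrated.

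Finally, to see that the chain has length $n$ — i.e. that the classes $[D_0],[D_1],\dots,[D_{n-1}],[R]$ are pairwise distinct — I would use the Bass-series hypothesis (4). A semidualizing module $C$ over a Cohen-Macaulay local ring obtained as $\mathrm{Ext}^g_{R_j}(R,R_j)$ has Bass series over $R$ controlled by the Bass series of $R_j$ up to the shift $t^{g}$ (this is the standard behaviour of Bass numbers under a module-finite map of finite flat dimension: $\mathrm{I}^R_R(t)$ for the $C$-coefficients relates to $\mathrm{I}^{R_j}_{R_j}(t)$ by a power of $t$). Hence if two of the $D$'s were isomorphic, their Bass series over $R$ would coincide, forcing $\mathrm{I}^{R_j}_{R_j}(t)=t^e\,\mathrm{I}^{R_{j'}}_{R_{j'}}(t)$ for some integer $e$ and some $j'<j$, contradicting (4). (Distinctness of $[R]$ from the others is the $j'$-versus-$k$ case with $R_k=R$ already covered by (4).) Assembling: the $D$'s are semidualizing $R$-modules, consecutive ones satisfy $\vartriangleleft$, and all are distinct, so
$$[\mathrm{Ext}_{Q}^{g_0}(R, Q)] \vartriangleleft [\mathrm{Ext}_{R_1}^{g_1}(R, R_1)]\vartriangleleft \cdots \vartriangleleft [\mathrm{Ext}_{R_{n-1}}^{g_{n-1}}(R, R_{n-1})]\vartriangleleft [R]$$
is a chain in $\mathfrak{G}_0(R)$ of length $n$, as claimed. $\Box$
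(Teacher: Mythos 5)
Your proposal matches the paper's opening and closing moves: it uses Remark~\ref{R1} (finite flat dimension over each $R_j$, hence finite $\mathrm{G}$-dimension, hence $\mathrm{Ext}^{g_j}_{R_j}(R,R_j)$ semidualizing concentrated in one degree) to produce the candidate modules, and it uses the Bass-series formula $\mathrm{I}_R^{C_j}(t)=t^{-g_j}\mathrm{I}_{R_j}^{R_j}(t)$ together with hypothesis~(4) to separate the isomorphism classes. Those parts are correct. (One small inaccuracy: (3) covers $1\leqslant j\leqslant k$, not $j=0$; for $j=0$ you should instead appeal to $Q$ being Gorenstein, which gives finite $\mathrm{G}$-dimension for free, exactly as the paper does.)

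The genuine gap is the middle step, and you flag it yourself: you never actually establish the relations $[\mathrm{Ext}^{g_{k-1}}_{R_{k-1}}(R,R_{k-1})]\trianglelefteq[\mathrm{Ext}^{g_k}_{R_k}(R,R_k)]$. You gesture at "a standard spectral-sequence or transitivity-of-$\mathrm{G}$-dimension argument" and say you "expect" $\mathrm{Hom}_R(D_k,D_{k-1})$ to be an iterated Ext module, but no mechanism is given that actually delivers the biduality isomorphism and the two families of $\mathrm{Ext}$-vanishings required by Definition~\ref{reflexive}. This is not a routine bookkeeping step. The paper closes it by a different structure: it argues by induction on $n$, first obtaining a chain
$[\mathrm{Ext}^{h_0}_Q(R_{n-1},Q)]\vartriangleleft\cdots\vartriangleleft[R_{n-1}]$
in $\mathfrak{G}_0(R_{n-1})$, and then \emph{transfers} this chain along the module-finite, finite-flat-dimension map $R_{n-1}\to R$ using the derived-category identity
$\mathbf{R}\mathrm{Hom}_{R_k}(R,R_k)\simeq\mathbf{R}\mathrm{Hom}_{R_{n-1}}(R,\mathbf{R}\mathrm{Hom}_{R_k}(R_{n-1},R_k))$
together with Frankild and Sather-Wagstaff's Theorem~5.7 of \cite{fs-w1}, which is precisely the black box that converts a $\trianglelefteq$-relation over the base $R_{n-1}$ into the corresponding $\trianglelefteq$-relation over $R$. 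Without that induction and that cited transfer theorem (or a worked-out substitute for it), your sketch does not yet constitute a proof; you would need to either reproduce the FSW transfer result or carry out the spectral-sequence computation in detail, including the degree bookkeeping and the vanishing of both $\mathrm{Ext}^{\geqslant1}_R(D_k,D_{k-1})$ and $\mathrm{Ext}^{\geqslant1}_R(\mathrm{Hom}_R(D_k,D_{k-1}),D_{k-1})$.
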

\begin{proof}
We prove by induction. For $n=1$, it is clear that  $\mathrm{Ext}_{Q}^{g_0}(R, Q)$ is a dualizing
$R$-module for some integer $g_0$.  It will be shown in following that the condition (4) implies
$[\mathrm{Ext}_{Q}^{g_0}(R, Q)]\vartriangleleft[R]$.
Let $n=2$. As $\mathrm{fd}_{R_1}(R)<\infty$, one has $\mbox{G}$-$\mathrm{dim}_{R_1}(R)<\infty$.
Then, by Remark~\ref{R1}, there exists an integer $g_1$ such that
$\mathrm{Ext}_{R_1}^{i}(R, R_1)=0$ for all $i\neq g_1$ and
$C_1=\mathrm{Ext}_{R_1}^{g_1}(R, R_1)$ is a semidualizing $R$-module.
Therefore there is an isomorphism $C_1\simeq \Sigma^{g_1}\mathbf{R}\mathrm{Hom}_{R_1}(R, R_1)$ in the derived category $\mathrm{D}(R)$.
Thus, by \cite[(1.7.8)]{c2},  $\mathrm{I}_R^{C_1}(t)=t^{-g_1}\mathrm{I}_{R_1}^{R_1}(t)$.
Also there exists an integer $g_0$ such that
$\mathrm{Ext}_{Q}^{i}(R, Q)=0$ for all $i\neq g_0$ and
$D=\mathrm{Ext}_{Q}^{g_0}(R, Q)$ is a dualizing $R$-module and then
$D\simeq \Sigma^{g_0}\mathbf{R}\mathrm{Hom}_Q(R, Q)$ in $\mathrm{D}(R)$.
Assumption (4) implies that $C_1$ is a non-trivial semidualizing $R$-module and so
$[D]\vartriangleleft[C_1]\vartriangleleft[R]$ is a chain in
$\mathfrak{G}_0(R)$ of length $2$.

Let $n>2$ and suppose that the assertion holds true for $n-1$. By induction there exist
integers $h_0, h_1, \cdots, h_{n-2}$ such that
\begin{equation}\label{e49}
[\mathrm{Ext}_Q^{h_0}(R_{n-1}, Q)] \vartriangleleft [\mathrm{Ext}_{R_1}^{h_1}(R_{n-1}, R_1)]\vartriangleleft \cdots \vartriangleleft
[\mathrm{Ext}_{R_{n-2}}^{h_{n-2}}(R_{n-1}, R_{n-2})]\vartriangleleft [R_{n-1}]
\end{equation}
is a chain in $\mathfrak{G}_0(R_{n-1})$ of length $n-1$.
( In fact, there is an isomorphism
$\mathrm{Ext}_{R_i}^{h_i}(R_{n-1}, R_i)\simeq \Sigma^{h_i}\mathbf{R}\mathrm{Hom}_{R_i}(R_{n-1}, R_i)$
in $\mathrm{D}(R_{n-1})$ for all $0\leqslant i\leqslant n-2$.)

As $\mathrm{fd}_{R_k}(R)<\infty$, one has $\mbox{G}$-$\mathrm{dim}_{R_k}(R)<\infty$
for all $k\in [n]$ and so, by Remark~\ref{R1}, there exists an integer $g_k$ such that
$\mathrm{Ext}_{R_k}^{i}(R, R_k)=0$ for all $i\neq g_k$ and
$C_k=\mathrm{Ext}_{R_k}^{g_k}(R, R_k)$ is a semidualizing $R$-module.
We have $C_k\simeq \Sigma^{g_k}\mathbf{R}\mathrm{Hom}_{R_k}(R, R_k)$ in $\mathrm{D}(R)$.
Also there exists an integer $g_0$ such that
$\mathrm{Ext}_{Q}^{i}(R, Q)=0$ for all $i\neq g_0$ and
$D=\mathrm{Ext}_{Q}^{g_0}(R, Q)$  is a dualizing for $R$  and so
$D\simeq \Sigma^{g_0}\mathbf{R}\mathrm{Hom}_Q(R, Q)$ in $\mathrm{D}(R)$.
Note that  there is an isomorphism
$\mathbf{R}\mathrm{Hom}_{R_k}(R, R_k)\simeq \mathbf{R}\mathrm{Hom}_{R_{n-1}}(R, \mathbf{R}\mathrm{Hom}_{R_k}(R_{n-1}, R_k))$, $0\leqslant k \leqslant n-1$,
in $\mathrm{D}(R)$,   and $R$ is a finite $R_{n-1}$-module with
$\mathrm{fd}_{R_{n-1}}(R)<\infty$.
Thus, by \cite[Theorem 5.7]{fs-w1} and (\ref{e49}), one obtains
$[\mathrm{Ext}_{R_{k-1}}^{g_{k-1}}(R, R_{k-1})]\trianglelefteq[\mathrm{Ext}_{R_k}^{g_k}(R, R_k)]$ for all $1\leqslant k \leqslant n-1$.
By \cite[(1.7.8)]{c2},  $\mathrm{I}_R^{C_k}(t)=t^{-g_k}\mathrm{I}_{R_k}^{R_k}(t)$ for all
$1\leqslant k \leqslant n-1$ and $\mathrm{I}_R^{D}(t)=t^{-g_0}\mathrm{I}_{Q}^{Q}(t)$.
Therefore, by condition (4),
$[\mathrm{Ext}_{R_{k-1}}^{g_{k-1}}(R, R_{k-1})]\vartriangleleft[\mathrm{Ext}_{R_k}^{g_k}(R, R_k)]$ for all $1\leqslant k \leqslant n-1$, and
$[\mathrm{Ext}_{R_{n-1}}^{g_{n-1}}(R, R_{n-1})]\vartriangleleft[R]$.
Hence
$$[\mathrm{Ext}_Q^{g_0}(R, Q)] \vartriangleleft [\mathrm{Ext}_{R_1}^{g_1}(R, R_1)]
\vartriangleleft \cdots \vartriangleleft
[\mathrm{Ext}_{R_{n-1}}^{g_{n-1}}(R, R_{n-1})]\vartriangleleft [R]$$
is a chain in $\mathfrak{G}_0(R)$ of length $n$.
\end{proof}

\begin{pro}\label{P43}
Let $R$ be a Cohen-Macaulay ring. Assume that there exist a Gorenstein
local ring $Q$  and ideals $I_1, \cdots, I_n$ of $Q$
satisfying the following conditions.
\begin{itemize}
\item[(1)] There is a ring isomorphism $R\cong Q/(I_1+\cdots + I_n)$.
\item[(2)]  For each $\Lambda\subseteq [n]$,
the ring $R_{_\Lambda}=Q/(\Sigma_{l\in \Lambda} I_l)$ is Cohen-Macaulay.
\item[(3)]  For subsets $\Lambda$, $\Gamma$ of $[n]$ with $\Lambda\cap \Gamma=\emptyset$
\begin{itemize}
\item[(i)] $\mathrm{Tor}^Q_{\geqslant1}(R_{_\Lambda}, R_{_\Gamma})=0;$
\item[(ii)] For all $i \in \mathbb{Z}$, $\hspace{0.3cm}\mathrm{\widehat{Ext}}^i_Q (R_{_\Lambda}, R_{_\Gamma})=0=\mathrm{\widehat{Tor}}^Q_i(R_{_\Lambda}, R_{_\Gamma}).$
\end{itemize}
\item[(4)] For two subsets $\Lambda$, $\Gamma$ of $[n]$ with $\Lambda\neq \Gamma$ and
for any integer $e$,
$\mathrm{I}_{R_{_\Lambda}}^{R_{_\Lambda}}(t)\neq t^e \mathrm{I}_{R_{_\Gamma}}^{R_{_\Gamma}}(t)$.
\end{itemize}
Then, for each $\Lambda\subseteq [n]$, there is an integer $g_{_\Lambda}$ such that
$\mathrm{Ext}^{g_{_\Lambda}}_{R_{_\Lambda}}(R, R_{_\Lambda})$
is a semidualizing $R$-module.
As conclusion, $R$ admits $2^n$ non-isomorphic semidualizing modules.
\end{pro}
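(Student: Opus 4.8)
The plan is to reduce the whole statement to a single finiteness fact, namely that $\mbox{G}$-$\mathrm{dim}_{R_{_\Lambda}}(R)<\infty$ for every $\Lambda\subseteq[n]$, and then to feed this into Remark~\ref{R1}. Fix $\Lambda\subseteq[n]$ and set $\Gamma=[n]\setminus\Lambda$. By (1) there is a ring isomorphism $R_{_\Lambda}\otimes_Q R_{_\Gamma}\cong Q/(I_1+\cdots+I_n)\cong R$, and by (2) the rings $R_{_\Lambda}$, $R_{_\Gamma}$, $R$ are all Cohen-Macaulay; the structure map $R_{_\Lambda}\to R$ is a module-finite local homomorphism of Cohen-Macaulay local rings, and $R_{_\Lambda}$ is a semidualizing module over itself. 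Hence, once $\mbox{G}$-$\mathrm{dim}_{R_{_\Lambda}}(R)<\infty$ is established, Remark~\ref{R1} produces an integer $g_{_\Lambda}=\mbox{G}$-$\mathrm{dim}_{R_{_\Lambda}}(R)$ with $\mathrm{Ext}^i_{R_{_\Lambda}}(R,R_{_\Lambda})=0$ for $i\ne g_{_\Lambda}$ and with $C_{_\Lambda}:=\mathrm{Ext}^{g_{_\Lambda}}_{R_{_\Lambda}}(R,R_{_\Lambda})$ a semidualizing $R$-module. (For $\Lambda=[n]$ this gives $C_{_\Lambda}=R$, and for $\Lambda=\emptyset$ it recovers a dualizing $R$-module, $Q$ being Gorenstein.)

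For the finiteness, I would base-change a Tate resolution along $Q\to R_{_\Lambda}$. Since $Q$ is Gorenstein, $\mbox{G}$-$\mathrm{dim}_Q(R_{_\Gamma})<\infty$, so by \cite[Theorem~3.1]{am} there is a Tate resolution $\mathbf{T}\stackrel{\vartheta}{\longrightarrow}\mathbf{P}\stackrel{\pi}{\longrightarrow}R_{_\Gamma}$ of $R_{_\Gamma}$ over $Q$. Applying $R_{_\Lambda}\otimes_Q-$ should yield a Tate resolution of $R$ over $R_{_\Lambda}$: the complex $R_{_\Lambda}\otimes_Q\mathbf{P}$ is a projective $R_{_\Lambda}$-resolution of $R$ because $\mathrm{Tor}^Q_{\geqslant1}(R_{_\Lambda},R_{_\Gamma})=0$ by (3)(i) (here $\Lambda\cap\Gamma=\emptyset$) while $\mathrm{H}_0=R_{_\Lambda}\otimes_Q R_{_\Gamma}\cong R$; the complex $R_{_\Lambda}\otimes_Q\mathbf{T}$ is an exact complex of projective $R_{_\Lambda}$-modules since $\mathrm{H}_i(R_{_\Lambda}\otimes_Q\mathbf{T})=\widehat{\mathrm{Tor}}^Q_i(R_{_\Gamma},R_{_\Lambda})=0$ by (3)(ii); by Hom-tensor adjunction $\mathrm{Hom}_{R_{_\Lambda}}(R_{_\Lambda}\otimes_Q\mathbf{T},R_{_\Lambda})\cong\mathrm{Hom}_Q(\mathbf{T},R_{_\Lambda})$, whose $(-i)$-th homology is $\widehat{\mathrm{Ext}}^i_Q(R_{_\Gamma},R_{_\Lambda})=0$ by (3)(ii), so this dual complex is exact; and $\mathrm{id}_{R_{_\Lambda}}\otimes\vartheta$ remains an isomorphism in all high degrees. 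Invoking \cite[Theorem~3.1]{am} once more gives $\mbox{G}$-$\mathrm{dim}_{R_{_\Lambda}}(R)<\infty$, so the semidualizing module $C_{_\Lambda}$ exists for every $\Lambda\subseteq[n]$.

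Finally, to see that the $2^n$ modules $C_{_\Lambda}$ are pairwise non-isomorphic, note that $\mathrm{Ext}^\bullet_{R_{_\Lambda}}(R,R_{_\Lambda})$ is concentrated in degree $g_{_\Lambda}$, so $C_{_\Lambda}\simeq\Sigma^{g_{_\Lambda}}\mathbf{R}\mathrm{Hom}_{R_{_\Lambda}}(R,R_{_\Lambda})$ in $\mathrm{D}(R)$; then \cite[(1.7.8)]{c2} gives $\mathrm{I}_R^{C_{_\Lambda}}(t)=t^{-g_{_\Lambda}}\mathrm{I}_{R_{_\Lambda}}^{R_{_\Lambda}}(t)$, just as in the proof of Proposition~\ref{P42}. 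If $C_{_\Lambda}\cong C_{_\Gamma}$ for $\Lambda\ne\Gamma$, comparing Bass series yields $\mathrm{I}_{R_{_\Lambda}}^{R_{_\Lambda}}(t)=t^{\,g_{_\Lambda}-g_{_\Gamma}}\mathrm{I}_{R_{_\Gamma}}^{R_{_\Gamma}}(t)$, contradicting (4). Thus $\{C_{_\Lambda}\}_{\Lambda\subseteq[n]}$ is a set of $2^n$ non-isomorphic semidualizing $R$-modules.

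I expect the only real obstacle to be the verification in the middle paragraph that $R_{_\Lambda}\otimes_Q-$ transports the three defining features of a Tate resolution over $Q$ to those over $R_{_\Lambda}$. The delicate steps there are the adjunction identification $\mathrm{Hom}_{R_{_\Lambda}}(R_{_\Lambda}\otimes_Q\mathbf{T},R_{_\Lambda})\cong\mathrm{Hom}_Q(\mathbf{T},R_{_\Lambda})$ and the recognition that the homologies of $R_{_\Lambda}\otimes_Q\mathbf{T}$ and of this dual complex compute $\widehat{\mathrm{Tor}}^Q_\bullet(R_{_\Gamma},R_{_\Lambda})$ and $\widehat{\mathrm{Ext}}^\bullet_Q(R_{_\Gamma},R_{_\Lambda})$ respectively --- precisely the point at which hypotheses (3)(i) and (3)(ii), applied to the disjoint pair $\{\Gamma,\Lambda\}$, enter. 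The Bass-series bookkeeping and the (degreewise) freeness needed to meet the Tate-resolution axioms are routine.
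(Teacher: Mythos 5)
Your proposal is correct and takes essentially the same approach as the paper: base-change a Tate resolution of $R_{_\Gamma}$ over $Q$ along $Q\to R_{_\Lambda}$, verify the three Tate-resolution axioms via (3)(i) and (3)(ii), invoke Remark~\ref{R1}, and separate the resulting semidualizing modules by their Bass series via \cite[(1.7.8)]{c2}. The only cosmetic difference is that the paper first establishes the slightly more general statement that $\mbox{G}$-$\mathrm{dim}_{R_{_\Gamma}}(R_{_\Lambda})<\infty$ for all $\Gamma\subseteq\Lambda\subseteq[n]$ before specializing to $\Lambda=[n]$, while you treat the needed case directly.
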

\begin{proof}
For two subsets $\Lambda$, $\Gamma$ of $[n]$ with $\Gamma\subseteq \Lambda$, we have
$\mbox{G}$-$\mathrm{dim}_{R_{_\Gamma}}(R_{_\Lambda})<\infty$.
Indeed,  $\mbox{G}$-$\mathrm{dim}_Q(R_{_{\Lambda\setminus\Gamma}})
<\infty$, since $Q$ is Gorenstein. Thus $R_{_{\Lambda\setminus\Gamma}}$
admits a Tate resolution
$\mathbf{T}\stackrel{\vartheta}{\longrightarrow}\mathbf{P}\stackrel{\pi}{\longrightarrow}
R_{_{\Lambda\setminus\Gamma}}$ over $Q$, where
$\vartheta_i$ is isomorphism for all $i\gg 0$.
We show that the induced diagram
$\mathbf{T}\otimes_Q R_{_\Gamma}\stackrel{\vartheta\otimes_Q R_{_\Gamma}}{-\hspace{-0.2cm}-\hspace{-0.2cm}-\hspace{-0.2cm}\longrightarrow}
\mathbf{P}\otimes_Q R_{_\Gamma}
\stackrel{\pi\otimes_Q R_{_\Gamma}}{-\hspace{-0.2cm}-\hspace{-0.2cm}-\hspace{-0.2cm}\longrightarrow}
R_{_{\Lambda\setminus\Gamma}}\otimes_Q R_{_\Gamma}$
is a Tate resolution of
$R_{_{\Lambda\setminus\Gamma}}\otimes_Q R_{_\Gamma}\cong R_{_\Lambda}$
over $R_{_\Gamma}$.
By condition (3)(i),  $\mathbf{P}\otimes_Q R_{_\Gamma}$ is a free resolution of
$R_{_\Lambda}$ over $R_{_\Gamma}$.
Also by assumption,
$\mathrm{\widehat{Tor}}^Q_i(R_{_{\Lambda\setminus\Gamma}}, R_{_\Gamma})=0$ for all
$i \in \mathbb{Z}$ and then $\mathbf{T}\otimes_Q R_{_\Gamma}$ is an exact complex
of finite free $R_{_\Gamma}$-modules.
Of course, the map $\vartheta_i\otimes_Q R_{_\Gamma}$ is an isomorphism for all $i\gg 0$.
In order to show that
$\mathrm{Hom}_{R_{_\Gamma}}(\mathbf{T}\otimes_Q R_{_\Gamma}, R_{_\Gamma})$
is exact we note that the sequence of isomorphisms
$$\mathrm{Hom}_{R_{_\Gamma}}(\mathbf{T}\otimes_Q R_{_\Gamma}, R_{_\Gamma})
\cong \mathrm{Hom}_Q(\mathbf{T}, \mathrm{Hom}_{R_{_\Gamma}}( R_{_\Gamma}, R_{_\Gamma}))\cong  \mathrm{Hom}_Q(\mathbf{T}, R_{_\Gamma}),$$
implies that
$$\mathrm{H}_i(\mathrm{Hom}_{R_{_\Gamma}}(\mathbf{T}\otimes_Q R_{_\Gamma}, R_{_\Gamma}))\cong\mathrm{H}_i(  \mathrm{Hom}_Q(\mathbf{T}, R_{_\Gamma}))
\cong \mathrm{\widehat{Ext}}^{-i}_Q (R_{_{\Lambda\setminus\Gamma}}, R_{_\Gamma})$$
which is zero, by condition (3)(ii), for all  $i \in \mathbb{Z}$.
Hence  the complex
$\mathrm{Hom}_{R_{_\Gamma}}(\mathbf{T}\otimes_Q R_{_\Gamma}, R_{_\Gamma})$
is exact and so
$R_{_\Lambda}$ admits a Tate resolution over $R_{_\Gamma}$. Therefore
$\mbox{G}$-$\mathrm{dim}_{R_{_\Gamma}}(R_{_\Lambda})<\infty$.

In particular, $\mbox{G}$-$\mathrm{dim}_{R_{_\Lambda}}(R)<\infty$ for all $\Lambda\subseteq [n]$.
Hence, by Remark~\ref{R1}, $\mathrm{Ext}^i_{R_{_\Lambda}}(R, R_{_\Lambda})=0$ for all $i\neq g_{_\Lambda}$,
where $g_{_\Lambda}:=\mbox{G}$-$\mathrm{dim}_{R_{_\Lambda}}(R)$, and
$C_{_\Lambda}:=\mathrm{Ext}^{g_{_\Lambda}}_{R_{_\Lambda}}(R, R_{_\Lambda})$
is a semidualizing $R$-module.

Note that there is an isomorphism
$C_{_\Lambda}\simeq \Sigma^{g_{_\Lambda}}\mathbf{R}\mathrm{Hom}_{R_{_\Lambda}}(R, R_{_\Lambda})$ in the derived category $\mathrm{D}(R)$. Therefore, by \cite[(1.7.8)]{c2},
$$\mathrm{I}_R^{C_{_\Lambda}}(t)=\mathrm{I}_R^{\Sigma^{^{g_{_\Lambda}}}\mathbf{R}\mathrm{Hom}_{R_{_\Lambda}}(R, R_{_\Lambda})}(t)=t^{-g_{_\Lambda}}\mathrm{I}_{R_{_\Lambda}}^{R_{_\Lambda}}(t).$$
Now the condition (4) implies that the $2^n$ semidualizing $C_{_\Lambda}$ are
pairwise non-isomorphic.
\end{proof}

%%%%%%%%%%%%%%%%%%%%%%%%%%%%%%%%%%%%%%%%%%%%%%%%%%%%%%%%%%%%%%%%%%%%%%%%%%

%%%%%%%%%%%%%%%%%%%%%%%%%%%%%%%%%%%%%%%%%%%%%%%%%%%%%%%%%%%%%%%%

\end{document}